\newcolumntype{x}[1]{>{\raggedright\hspace{0pt}}p{#1}}
\newtheorem{theorem}{Theorem}[section]
\newtheorem{lemma}[theorem]{Lemma}
\newtheorem{corollary}[theorem]{Corollary}
\newtheorem{proposition}[theorem]{Proposition}
\newtheorem{conjecture}[theorem]{Conjecture}
\theoremstyle{definition}
\newtheorem{remark}[theorem]{Remark}
\newtheorem{definition}[theorem]{Definition}
\newtheorem{question}[theorem]{Question}
\newtheorem{example}[theorem]{Example}
\newtheorem{observation}[theorem]{Observation}
\newcommand{\ZZ}{\mathbb{Z}}
\newcommand{\NN}{\mathbb{N}}
\newcommand{\QQ}{\mathbb{Q}}
\newcommand{\KK}{\mathbb{K}}
\newcommand{\set}[1]{\{#1\}}
\newcommand{\with}{\,:\,}
\newcommand{\defa}{:=}
\newcommand{\card}[1]{|#1|}
\newcommand{\qq}[1]{``#1''}
\DeclareMathOperator{\sdepth}{sdepth}
\DeclareMathOperator{\depth}{depth}
\DeclareMathOperator{\spdim}{spdim}
\DeclareMathOperator{\pdim}{pdim}
\DeclareMathOperator{\rk}{rk}
\DeclareMathOperator{\im}{Im}
\DeclareMathOperator{\lcm}{lcm}
\DeclareMathOperator{\Char}{char}
\newcommand{\bool}[1]{\mathfrak{B}(#1)}
\newcommand{\below}[2]{{(#1[}_{#2}}
\renewcommand{\below}[2]{{#2}_{<{#1}}}
\newcommand{\beloweq}[2]{{#2}_{\leq{#1}}}
\newcommand{\del}{\mathrm{del}}
\newcommand{\lk}{\mathrm{lk}}
\newcommand{\exlat}[1][\Delta]{\mathcal{L}(#1)}
\newcommand{\CC}[1]{\mathcal{CC}({#1})}
\newcommand{\I}{\mathrm{I}}
\newcommand{\Q}{\mathrm{Q}}
\begin{document}

\title{Stanley depth and simplicial spanning trees}

\author{Lukas Katth\"an}

\address{Universit\"at Osnabr\"uck, FB Mathematik/Informatik, 49069
Osnabr\"uck, Germany}\email{lukas.katthaen@uos.de}

\subjclass[2010]{Primary: 05E40; Secondary: 13D02,55U10.}

\keywords{Monomial ideal; LCM-lattice; Stanley depth; Stanley conjecture.}
\thanks{The author was partially supported by the German Research Council DFG-GRK~1916.}

\begin{abstract}
We show that for proving the Stanley conjecture,
it is sufficient to consider a very special class of monomial ideals.
These ideals (or rather their lcm lattices) are in bijection with the simplicial spanning trees of skeletons of a simplex.

We apply this result to verify the Stanley conjecture for quotients of monomial ideals with up to six generators.
For seven generators we obtain a partial result.
\end{abstract}

\maketitle

\section{Introduction}
Let $\KK$ be a field, $S$ an $\NN^n$-graded $\KK$-algebra and $M$ a finitely generated $\ZZ^n$-graded $S$-module.
The \emph{Stanley depth} of $M$, denoted $\sdepth M$, is a combinatorial invariant of $M$ related to a conjecture of Stanley from 1982 \cite[Conjecture 5.1]{St} (nowadays called the \emph{Stanley conjecture}), which states that $\depth M \leq \sdepth M$.
We refer the reader to \cite{intro} for an introduction to the subject and to the survey by Herzog \cite{H} for a comprehensive account of the known results.
Most of the research concentrates on the particular case where $S$ is a polynomial ring and $M$ is either a monomial ideal $I \subset S$ or a quotient $S/I$. In the present paper we will also work in this setting.

The main result of the present paper is that for proving the Stanley conjecture for $M = S/I$ or $M = I$, it is sufficient to consider certain very special ideals.
These ideals (more precisely their lcm lattices) are in bijection with certain simplicial complexes which we call \emph{stoss} complexes (short for \emph{Spanning Tree Of a Skeleton of a Simplex}):
\begin{definition}
	A $d$-dimensional simplicial complex on $k$ vertices is called \emph{stoss} complex if it has a complete $(d-1)$-skeleton and is $\KK$-acyclic.
\end{definition}
These complexes can be seen as high dimensional analogues of trees.
They have already been studied by Kalai \cite{kalai} in 1983, and more recently by Duval, Klivans and Martin \cite{DKM} and others.
For $k \in \NN$ we denote by $\bool{k}$ the boolean algebra on $k$ generators, i.e. the set of all subsets of $[k]$ ordered by inclusion.
To every stoss complex $\Delta$ with vertex set $[k]$ we associate a finite lattice $\exlat$ as follows:
	\[ \exlat \defa \set{V \subset [k] \with \Delta|_V \text{ is acyclic}} \subseteq \bool{k},\]
where $\Delta|_V$ denotes the restriction of $\Delta$ to $V$. This is a poset (ordered by inclusion) with maximal element $[k]$ and minimal element $\emptyset$.
Moreover, if $V,W \in \exlat$, then $V \cap W \in \exlat$ (\Cref{lemma:schnittazyklisch}), so $\exlat$ is a finite meet-semilattice, and thus a lattice.

Recall that the \emph{lcm lattice}\cite{GPW} of a monomial ideal $I \subset S$ is the set $L_I$ of all least common multiples (lcm) of subsets of the minimal generators of $I$.
Our main result is the following:
\begin{theorem}\label{thm:main}
	Let $k \geq 2$ and $p \geq 2$.
	Assume that the Stanley conjecture holds for $S/I$ for all ideals $I\subset S$ (in all polynomial rings) with $L_I \cong \exlat$ for some $(p-1)$-dimensional stoss complex $\Delta$ with $k$ vertices. 
	Then the Stanley conjecture holds for $S/I$ for all ideals $I\subset S$ (in all polynomial rings) with $k$ generators and projective dimension $p$.
	
	The same statement holds for $I$ instead of $S/I$.
\end{theorem}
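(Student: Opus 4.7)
The plan is to combine two well-known principles with a combinatorial construction tailored to stoss complexes. The first principle is the Gasharov--Peeva--Welker theorem that both $\depth S/I$ and (as has been established in the Stanley-depth literature) $\sdepth S/I$ depend only on the isomorphism class of the lcm lattice $L_I$; consequently, the Stanley conjecture for $S/I$ is a statement about the lattice $L_I$, once one ranges over all polynomial rings. The second principle is a \emph{lattice-coarsening lemma}: if $\phi\colon L' \twoheadrightarrow L$ is a suitable surjection of finite atomic lattices (atoms mapping to atoms, preserving the needed homological data) and the Stanley conjecture holds for every ideal with lcm lattice isomorphic to $L'$, then it holds for every ideal with lcm lattice isomorphic to $L$. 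Such a lemma is the standard vehicle for reducing the conjecture to the study of ``maximal'' lcm lattices.

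Given these two ingredients, the strategy is: starting from an arbitrary ideal $I \subset S$ with $k$ generators and $\pdim S/I = p$, produce a $(p-1)$-dimensional stoss complex $\Delta$ on $[k]$ together with a surjection $\mathcal{L}(\Delta) \twoheadrightarrow L_I$ of the required type. Granted such a $\Delta$, the hypothesis applied to the lcm lattice $\mathcal{L}(\Delta)$ combined with the coarsening lemma yields the conjecture for $S/I$, and the $I$-version follows by the same argument (since $\sdepth I$ and $\depth I$ are likewise lcm-lattice invariants). The case $k=2$, $p=2$ is the base case that fixes the quantifiers ``$k\geq 2$, $p\geq 2$'' in the statement and is handled directly.

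The main step---and the principal obstacle---is the existence of the stoss complex $\Delta$ with $\mathcal{L}(\Delta) \twoheadrightarrow L_I$. The matching of parameters is forced: a $(p-1)$-dimensional stoss complex on $[k]$ has exactly the ``maximal'' amount of homological complexity compatible with $k$ generators and projective dimension $p$, so $L_I$ should arise from $\mathcal{L}(\Delta)$ by identifying certain elements. To construct $\Delta$, I would start from a simplicial complex supporting the minimal free resolution of $I$ (such as a Lyubeznik-type or Scarf-type subcomplex of the Taylor complex), whose dimension is at most $p-1$, and then fill it in to a stoss complex: first by completing the $(p-2)$-skeleton to the full skeleton of the simplex on $[k]$, and then by adding $(p-1)$-faces using Kalai's inductive construction of $\mathbb{K}$-acyclic complexes with prescribed lower skeleton. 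The subsets $V \subset [k]$ for which $\Delta|_V$ becomes acyclic after these completions should be arranged to correspond precisely to the ``join-equivalence classes'' induced by the original lattice $L_I$, giving the surjection $\mathcal{L}(\Delta) \twoheadrightarrow L_I$.

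The technical heart will therefore lie in verifying that such a completion can always be performed while (i) controlling which subsets $V$ become acyclic, so that the combinatorics of $\mathcal{L}(\Delta)$ refines that of $L_I$, and (ii) preserving projective dimension $p$, so that $\Delta$ is genuinely $(p-1)$-dimensional rather than degenerating to something smaller. I expect (i) to be the delicate point, as it requires a simultaneous control of acyclicity for many restrictions, and it is here that \Cref{lemma:schnittazyklisch} (closure of $\mathcal{L}(\Delta)$ under intersection) together with Kalai's flexibility theorem should do the heavy lifting.
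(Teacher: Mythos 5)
Your high-level plan (reduce via surjections of lcm lattices to ``maximal'' lattices, then identify those as the $\exlat$) correctly matches the paper, and you correctly invoke the two key facts that $\spdim$ and $\pdim$ only depend on the lcm lattice and are monotone under surjective join-preserving maps. However, the proposal has a genuine gap in the central construction step.

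You propose to build the stoss complex $\Delta$ directly, starting from ``a simplicial complex supporting the minimal free resolution of $I$'' and then completing it via Kalai's construction. This is not how the paper proceeds, and the construction as described would fail for several reasons. First, not every monomial ideal has a simplicial minimal free resolution, so the starting complex need not exist (one only has a \emph{cellular}/Taylor-type supporting structure in general, and its dimension can exceed $p-1$). Second, and more seriously, even granting a suitable starting complex, Kalai-style completion of the top skeleton to an acyclic complex gives no control over \emph{which restrictions $\Delta|_V$ become acyclic}; so there is no reason the resulting $\exlat$ should contain $L_I$ as a meet-subsemilattice, i.e.\ there is no a priori surjection $\exlat \twoheadrightarrow L_I$. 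Third, the ``coarsening lemma'' as you state it is not quite enough: a surjection $L'\twoheadrightarrow L_I$ with $\spdim_\Q L' \leq \pdim_\Q L'$ only gives $\spdim_\Q L_I \leq \pdim_\Q L'$, so you must also arrange $\pdim_\Q L' = p$; your formulation only alludes to this via ``preserving the needed homological data.''

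The paper avoids all of this by going entirely lattice-theoretically. One embeds $L_I$ in $\bool{k}$ (\Cref{prop:embedding}), and among all meet-subsemilattices $L \subseteq \bool{k}$ containing $L_I$ with $\pdim_\Q L = p$ one picks a maximal one; such an $L$ exists by finiteness, and by monotonicity of $\pdim$ under inclusion it is a maximal lattice in the sense of the paper's \Cref{thm:origin} and Observation~1.5. The heavy lifting is then done by \Cref{thm:extremal} (the Scarf complex of a maximal lattice of projective dimension $p$ on $k$ atoms is a $(p-1)$-dimensional stoss complex) and \Cref{thm:reconstruction} ($L = \exlat$ for that Scarf complex $\Delta$). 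No attempt is made to ``construct'' a suitable $\Delta$: it is read off from $L$, and the surjection $\exlat \twoheadrightarrow L_I$ comes for free from the containment $L_I \subseteq L$. Your blind attempt would need to replace the speculative Kalai-completion step by exactly this structure theorem for maximal lattices; without it, the step ``arrange the acyclic restrictions to match the join-equivalence classes of $L_I$'' is not a proof. Finally, the clause ``$k\geq 2$, $p\geq 2$'' is not the base of an induction; it only excludes degenerate parameters, so treating $k=p=2$ as a base case is a minor misreading of the statement.
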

For reasons that will become apparent later, we call lattices of the form $\exlat$ \emph{maximal lattices} and the ideals $I$ with $L_I \cong \exlat$ \emph{extremal ideals}.
It is known from \cite{lcm} that the Stanley projective dimension only depends on the lcm lattice of an ideal, so effectively one only needs to consider one ideal for each stoss complex.
In particular, this reduction is automatically compatible with the reduction of the Stanley conjecture to the squarefree case \cite{IKM}.
On the other hand, it was shown by Herzog, Soleyman Jahan and Zheng \cite{HSJZ} that the Stanley conjecture can be reduced to the case where $S/I$ is Cohen-Macaulay.
The present result is not compatible with this reduction, as extremal ideals are in general not Cohen-Macaulay.

To give an impression of the reduction let us give some explicit numbers. Using the methods described in \cite{IKM3}, we found that there are exactly $7443$ isomorphism types of lcm lattices of ideals with $5$ minimal generators. Of those, $457$ admit a Cohen-Macaulay ideal, but there are only $8$ stoss complexes on $5$ vertices.

We give some properties of extremal ideals.
\begin{theorem}[\Cref{cor:extremal}]
	Let $I \subset S$ be a monomial ideal with $L_I \cong \exlat$ for some $(p-1)$-dimensional stoss complex $\Delta$ on $k$ vertices.
	Then the Scarf complex of $I$ equals $\Delta$, and $I$ has a simplicial minimal free resolution supported on $\Delta$.
	In particular, the projective dimension of $S/I$ is $p$ and its Betti numbers are given by
		\[ \beta^S_{i}(S/I) = 
		\begin{cases}
			\binom{k}{i} &\text{ for } 0 \leq i < p, \\
			\binom{k-1}{i-1} &\text{ for }  i = p, \\
			0 &\text{ for } i > p. \\
		\end{cases}\]
\end{theorem}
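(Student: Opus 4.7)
\emph{Plan.} My plan is to transport the problem to the combinatorial side of $\exlat$. Fix a lattice isomorphism $\phi : L_I \to \exlat$; since isomorphisms preserve atoms and the atoms of $\exlat$ are precisely the singletons $\{i\}$ (each $\{i\}\in\exlat$, while any $V \in \exlat$ with $|V| \geq 2$ strictly contains some such singleton), I may relabel so that $\phi(m_i) = \{i\}$. Then for every $V \subseteq [k]$, $\phi(\lcm_{i \in V} m_i) = \bigvee_{i \in V} \{i\} =: V^*$, the join in the meet-semilattice $\exlat \subseteq \bool{k}$, which equals the \emph{closure} $\min\{W \in \exlat : V \subseteq W\}$. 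Two subsets share an lcm iff they share this closure, so $V \in \Delta_I$ iff $V$ is the only subset of $[k]$ with closure $V$; this forces $V \in \exlat$.

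\emph{Scarf complex $=\Delta$.} I would proceed by casework on $n := |V|$. If $n \leq p-1$, the complete $(p-2)$-skeleton puts $V$ into $\Delta \cap \exlat$, and every proper $U\subsetneq V$ has $|U| \leq p-2$, so $U\in\exlat$ and $U^* = U \neq V$; hence $V\in\Delta_I$. If $n = p$, $\Delta|_V$ is either the full simplex on $V$ (when $V\in\Delta$) or the boundary sphere $S^{p-2}$, so $V \in \exlat \Leftrightarrow V \in \Delta$, and the previous argument gives $V \in \Delta_I \Leftrightarrow V \in \Delta$. The substantive case is $n > p$: suppose $V \in \exlat$. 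Then $\Delta|_V$ is acyclic with complete $(p-2)$-skeleton on $n$ vertices and dimension $\leq p-1$, so reduced Euler characteristic forces $f_{p-1}(\Delta|_V) = \binom{n-1}{p-1}$. If every vertex deletion $\Delta|_{V\setminus\{i\}}$ were also acyclic, the same calculation would give it $\binom{n-2}{p-1}$ top faces; double-counting (each $(p-1)$-face of $\Delta|_V$ lies in exactly $n-p$ of these restrictions) would force
\[ (n-p)\binom{n-1}{p-1} \;=\; \sum_{i \in V} f_{p-1}(\Delta|_{V \setminus \{i\}}) \;=\; n\binom{n-2}{p-1}, \]
contradicting the binomial identity $(n-p)\binom{n-1}{p-1} = (n-1)\binom{n-2}{p-1}$. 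Thus some $V\setminus\{i\}\notin\exlat$, and since $(V\setminus\{i\})^* \subseteq V$ must strictly exceed $V\setminus\{i\}$, it equals $V$, so $V$ is not the unique subset with closure $V$ and $V\notin\Delta_I$.

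\emph{Resolution, Betti numbers, and main obstacle.} For the minimal free resolution I would apply the Bayer--Peeva--Sturmfels acyclicity criterion: $\Delta$ with its natural monomial labels supports a free resolution of $S/I$ iff, for every $m\in L_I$, the induced subcomplex on $\{i : m_i \mid m\}$ is $\KK$-acyclic. Under our identification this subcomplex is $\Delta|_V$ for the corresponding $V\in\exlat$, which is acyclic by definition of $\exlat$; so $\Delta$ supports a resolution and $\beta_i(S/I) \leq f_{i-1}(\Delta)$. Since $\Delta$ coincides with the Scarf complex, the reverse inequality $\beta_i(S/I) \geq f_{i-1}(\Delta)$ is automatic, so the $\Delta$-supported resolution is the minimal free one and $\beta_i(S/I) = f_{i-1}(\Delta)$. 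The claimed formula then follows: the complete $(p-2)$-skeleton gives $f_{i-1}(\Delta) = \binom{k}{i}$ for $0 \leq i \leq p-1$; the $V=[k]$ case of the Euler characteristic computation gives $f_{p-1}(\Delta) = \binom{k-1}{p-1}$; and since $\Delta$ has no higher faces, $\pdim S/I = \dim\Delta + 1 = p$. The main obstacle I anticipate is the vertex-deletion count in the case $|V|>p$: everything else is a direct unpacking of the definitions of the Scarf complex, the acyclicity criterion, and the $f$-vector of a stoss complex.
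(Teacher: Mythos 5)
Your proof is correct, but it is a genuinely different and more self-contained route than the paper's. The paper derives this statement from the general theory of \emph{maximal lattices}: Theorem~\ref{thm:extremal} shows that the Scarf complex of any maximal lattice is a stoss complex (its four-step proof removes elements one at a time using the rank/meet-irreducibility lemmas \ref{lem:ordnung} and \ref{prop:onestep}, computes the Betti numbers, matches them to the $f$-vector, and finally deduces acyclicity from the existence of a Scarf resolution), while Theorem~\ref{thm:reconstruction} proves that $\exlat$ is the unique maximal lattice whose Scarf complex is $\Delta$; the statement in question is then a byproduct. You sidestep all of this by computing the Scarf complex of $\exlat$ directly: the cases $n\le p$ follow from the completeness of the $(p-2)$-skeleton, and for $n>p$ your Euler-characteristic/double-counting argument — $(n-p)\binom{n-1}{p-1}=(n-1)\binom{n-2}{p-1}\ne n\binom{n-2}{p-1}$ since $\binom{n-2}{p-1}>0$ — shows that some vertex deletion of $\Delta|_V$ is not acyclic, hence $V\notin\Delta_I$, which is a neat alternative to the paper's removal lemmas. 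You then invoke the Bayer--Peeva--Sturmfels acyclicity criterion in the forward direction (acyclic restrictions $\Rightarrow$ resolution), where the paper uses the converse. The two approaches buy different things: yours is shorter and elementary for this particular theorem, while the paper's machinery is needed anyway to establish that every maximal lattice arises as $\exlat$, which is what makes the reduction in Theorem~\ref{thm:main} go through. One point worth spelling out in your write-up, since it is used implicitly in the BPS step, is that for every monomial $m$ the set $V=\{i:m_i\mid m\}$ automatically lies in $\exlat$ (it equals $\{i:m_i\mid m_V\}$, which under $\phi$ is the set of atoms below $\phi(m_V)=V^*$, forcing $V=V^*$); this is what lets you reduce the acyclicity check to restrictions $\Delta|_V$ with $V\in\exlat$.
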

It follows that the minimal free resolution of $S/I$ looks like a truncated Koszul complex.
In particular, different extremal ideals with the same number of generators and the same projective dimension have very similar minimal free resolutions.
As the Stanley conjecture can be understood as a question about (multigraded) Hilbert series, and the latter is an alternating sum over the Hilbert series of the modules in the resolution, we hope that this extra structure will prove helpful for further progress.


Let us describe the general idea of the proof.
The following two results by Gasharov, Peeva and Welker, resp. Ichim, the author and Moyano Fern\'andez are the starting point of our considerations.
\begin{theorem}\label{thm:origin}
	Let $I \subset S$ and $I' \subset S'$ be two monomial ideals.
	If there exists a surjective join-preserving map $L_I \rightarrow L_{I'}$, then
	\begin{align*}
	\pdim S'/I' &\leq \pdim S/I, \text{ and}  & \text{\cite{GPW}} \\
	\spdim S'/I' &\leq \spdim S/I.  & \text{\cite{lcm}}
	\end{align*}
	The corresponding statements hold as well for $I$ and $I'$ instead of $S/I$ and $S'/I'$.
\end{theorem}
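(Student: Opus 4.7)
The plan is to turn Theorem \ref{thm:origin} into a transfer mechanism. Given an arbitrary monomial ideal $I \subset S$ with $k$ minimal generators $m_1,\dots,m_k$ and $\pdim S/I = p$, I will produce a $(p-1)$-dimensional stoss complex $\Delta$ on $[k]$ together with a surjective join-preserving map $\psi : \exlat \twoheadrightarrow L_I$. Granting this, any monomial ideal $I' \subset S'$ with $L_{I'} \cong \exlat$ has $\pdim S'/I' = p$ by the Betti formula of \Cref{cor:extremal}, whence $\spdim S'/I' \leq p$ by hypothesis. Applying Theorem \ref{thm:origin} to $\psi$ then yields $\spdim S/I \leq \spdim S'/I' \leq p = \pdim S/I$, which is the Stanley conjecture for $S/I$. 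The same chain of inequalities proves it for $I$ using the second half of Theorem \ref{thm:origin}.

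Write $\phi : \bool{k} \twoheadrightarrow L_I$ for the canonical lattice surjection $V \mapsto \lcm_{i \in V} m_i$, and for each $u \in L_I$ let $V^+_u := \{i \in [k] : m_i \mid u\}$, the unique maximum of the fibre $\phi^{-1}(u)$. I claim it suffices to find a stoss complex $\Delta$ with $V^+_u \in \exlat$ for every $u \in L_I$. Given such a $\Delta$, set $\psi := \phi|_{\exlat}$. Surjectivity is immediate from $\phi(V^+_u) = u$. For join-preservation, observe that for $V, W \in \exlat$ the element $V^+_{\phi(V \cup W)}$ belongs to $\exlat$ and contains $V \cup W$, so the $\exlat$-join $V \vee_{\exlat} W$ (the smallest element of $\exlat$ containing $V \cup W$, which exists because $\exlat$ is closed under intersection) is contained in $V^+_{\phi(V \cup W)}$; hence $\phi(V \vee_{\exlat} W) = \phi(V \cup W) = \phi(V) \vee \phi(W)$, as desired.

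The real content is then a combinatorial-topological existence statement: construct a $(p-1)$-dimensional $\KK$-acyclic complex $\Delta$ on $[k]$ with complete $(p-2)$-skeleton such that $\Delta|_{V^+_u}$ is $\KK$-acyclic for every $u \in L_I$. Restrictions with $|V^+_u| < p$ are automatically acyclic, being full simplices, so only the sets $V^+_u$ with $|V^+_u| \geq p$ impose genuine constraints; for each such set, enough of the $\binom{k-1}{p-1}$ top faces of $\Delta$ must lie inside $V^+_u$ to kill the top homology of its $(p-2)$-skeleton. I would attempt an inductive construction, adding $(p-1)$-faces one at a time while preserving acyclicity of all restrictions currently in play.

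The main obstacle will be showing that the greedy procedure never gets stuck: verifying that all the acyclicity constraints imposed by the family $\{V^+_u\}_{u \in L_I}$ can be reconciled inside a single $\KK$-acyclic complex of the right dimension. The crucial input should be the hypothesis $\pdim S/I = p$, translated via the Gasharov--Peeva--Welker formula into a statement about the homology of the order complexes of intervals in $L_I$; this should provide exactly the ``slack'' needed to fit the required $(p-1)$-faces inside each $V^+_u$. Making this quantitative, and possibly reinterpreting the inductive step in terms of an algebraic rank count on the top boundary map of a support complex, is where I expect the most delicate work to lie.
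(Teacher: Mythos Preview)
Your proposal addresses \Cref{thm:main}, not \Cref{thm:origin} (which the paper merely cites), so I compare against the paper's argument for \Cref{thm:main}. Your overall scheme and your reduction to the condition ``$V^+_u \in \exlat$ for all $u \in L_I$'' are both correct; under the canonical meet-embedding $j : L_I \hookrightarrow \bool{k}$ of \Cref{prop:embedding} one has $j(u) = V^+_u$, so your condition is exactly $j(L_I) \subseteq \exlat$, and your verification that $\phi|_{\exlat}$ is then surjective and join-preserving is fine.

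The genuine gap is the existence of $\Delta$. A greedy addition of $(p-1)$-faces is not a proof: you give no rule for choosing the next face, no invariant to maintain at intermediate stages (the restrictions $\Delta|_{V^+_u}$ are certainly \emph{not} acyclic until the very end, so ``preserving acyclicity'' is not even well-posed), and no argument that the constraints from the various $V^+_u$ are simultaneously satisfiable. The appeal to $\pdim S/I = p$ via the Gasharov--Peeva--Welker formula remains a hope rather than a mechanism. This existence statement is the entire content of the theorem, and the paper establishes it by a non-constructive route that your sketch does not approximate: one enlarges $j(L_I) \subseteq \bool{k}$ to a meet-subsemilattice $L'$ that is \emph{maximal} among those with the same number of atoms and $\pdim_\Q = p$ (existence by finiteness), then proves in \Cref{thm:extremal}, via repeated use of \Cref{prop:onestep} and a Betti-number count, that the Scarf complex of any such $L'$ is automatically a $(p-1)$-dimensional stoss complex $\Delta$, and finally shows in \Cref{thm:reconstruction}, via \Cref{lemma:schnittazyklisch} and \Cref{lemma:acyclic}, that $L' = \exlat$. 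None of this work is replaced by a greedy procedure, and it is exactly where the hypothesis $\pdim S/I = p$ enters in a controlled way.
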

Here, $\pdim M$ denotes the projective dimension and $\spdim M = n - \sdepth M$ denotes the \emph{Stanley projective dimension}.
Note that the Stanley conjecture can be written in terms of these invariants as $\spdim M \leq \pdim M$.
In view of the result above, the following observation is clear.
\begin{observation}
	To prove the Stanley conjecture for all ideals $I$ or quotients of ideals $S/I$ with a fixed number of generators $k$ and a fixed projective dimension $p$, it is enough to consider those ideals $I$, whose lcm lattice $L_I$ is \emph{not} the image of another lcm lattice of an ideal with the same number of generators and the same projective dimension.
\end{observation}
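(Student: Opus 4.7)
The plan is to derive this observation directly from \Cref{thm:origin} via a finiteness argument. The key point is that for ideals with $k$ minimal generators, the lcm lattice embeds into $\bool{k}$ and hence has at most $2^k$ elements; in particular, there are only finitely many isomorphism types of such lattices.

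More concretely, I fix $I \subset S$ with $k$ generators and $\pdim S/I = p$, and let $\mathcal{C}$ denote the set of isomorphism classes of lcm lattices $L_J$, as $J$ ranges over monomial ideals (in arbitrary polynomial rings) with $k$ minimal generators, projective dimension $p$, and admitting a surjective join-preserving map $L_J \to L_I$. This collection is finite and contains $L_I$. I then pick $L^* \in \mathcal{C}$ of maximal cardinality, and let $I^* \subset S^*$ be a representative ideal. The claim is that $L^*$ is not the image of any other lcm lattice of an ideal with $k$ generators and projective dimension $p$: given any surjective join-preserving map $L_J \to L^*$ from such a $J$, composing with $L^* \to L_I$ places $L_J$ in $\mathcal{C}$, so $|L_J| \leq |L^*|$ by maximality, forcing the surjection $L_J \to L^*$ to be a bijection and hence a lattice isomorphism (its inverse is automatically join-preserving).

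By the assumption of the observation, Stanley's conjecture holds for $I^*$, i.e.\ $\spdim S^*/I^* \leq \pdim S^*/I^* = p$. Applying \Cref{thm:origin} to the surjection $L_{I^*} \to L_I$ yields $\spdim S/I \leq \spdim S^*/I^* \leq p = \pdim S/I$, which is Stanley's conjecture for $S/I$. The statement for $I$ in place of $S/I$ follows identically from the second half of \Cref{thm:origin}. There is essentially no obstacle here: \Cref{thm:origin}, combined with the finiteness of lcm lattices on $k$ generators, already packages exactly the required monotonicity, which is why the author calls the statement ``clear''.
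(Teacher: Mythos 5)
Your proof is correct and takes essentially the same route as the paper: both hinge on the finiteness of the collection of lcm lattices on $k$ generators (via the embedding into $\bool{k}$) together with the monotonicity of \Cref{thm:origin}. The paper simply asserts the existence of maximal elements in the resulting finite poset, while you make this explicit with the maximal-cardinality argument and the observation that a cardinality-preserving join-preserving surjection is an isomorphism.
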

Consider the (finite) poset of all lcm lattices of monomial ideals with a given number $k$ of generators, ordered by the existence of surjective join-preserving maps.
Then for each $p$ we consider the subposet of those lattices coming from ideals of projective dimension $p$.
Our observation states that we only need to consider the maximal elements of this set.
We will show in \Cref{thm:extremal} and \Cref{thm:reconstruction} that these are exactly the maximal lattices $\exlat$ defined above, 
thus the main result \Cref{thm:main} follows from the observation.


In the second part of the paper, we apply \Cref{thm:main} to verify the Stanley conjecture in some cases.
First, in Sections \ref{sec:amalgamation-of-lattices} and \ref{sec:tools-for-computing-the-stanley-projective-dimension}
we develop techniques for bounding the Stanley projective dimension of maximal lattices.
In particular, in \Cref{lemma:reduction} we obtain a simple way to compute lower bounds of the Stanley depth.

In Section \ref{sec:applications} we apply all our results to verify the Stanley conjecture in several cases. 
First, we show in \Cref{thm:k-2} that if $I$ has $k$ generators and $\spdim S/I = k-2$, then $S/I$ and $I$ satisfy the Stanley conjecture.
This complements several similar results \cite{KSF,HJY,lcm} and in particular implies the Stanley conjecture for ideals with up to five generators.
The latter has also been obtained by different methods in \cite{IKM3}.
Then we turn to the case of six generators.
This is technically more challenging and we can verify the Stanley conjecture only up a single exceptional case.
For this special ideal we resort to a computational proof where we reformulate Stanley decompositions as a system of linear Diophantine equations and inequalities and use the software \texttt{SCIP} \cite{SCIP} to solve it.
Finally, we consider seven generators.
Here we verify the Stanley conjecture for $M = I$. For $M = S/I$, we only obtain a partial result.
This is essentially a computer proof, as we rely heavily on computations for a complete enumeration of cases and the verification of every case.
Of course, this is not very illuminating,
but at least we can narrow down potential counterexample to the Stanley conjecture among the quotients of ideals with seven generators.

In the last section of this paper we discuss various approaches to further research.

\section{Preliminaries}
We fix a field $\KK$ for the whole paper.
Some of the notions defined below depends on the choice of $\KK$, but we will suppress this dependence.
The letter $S$ will always denote a polynomial ring over $\KK$ in $n$ indeterminates.
The number of variables $n$ is \emph{not} fixed, i.e. different occurrences of $S$ might refer to polynomial rings in different numbers of variables.
As the number of variables never enters in our considerations, this should not lead to confusion.

\subsection{Stanley depth and Stanley projective dimension}
Consider the polynomial ring $S$ endowed with the fine $\ZZ^n$-grading.
Let $M$ be a finitely generated graded $S$-module, and let $\lambda$ be a homogeneous element in $M$.
Let $Z \subset \set{X_1, \ldots , X_n}$ be a subset of the set of indeterminates of $S$.
The $\KK[Z]$-submodule $\lambda \KK[Z]$ of $M$ is called a \emph{Stanley space} of $M$ if $\lambda \KK[Z]$ is a free $\KK[Z]$-submodule.
A \emph{Stanley decomposition} of $M$ is a finite family
\[
\mathcal{D}=(\KK[Z_i],\lambda_i)_{i\in \mathcal{I}}
\]
in which $Z_i \subset \{X_1, \ldots ,X_n\}$ and $\lambda_i\KK[Z_i]$ is a Stanley space of $M$ for each $i \in \mathcal{I}$ with
\[
M \cong \bigoplus_{i \in \mathcal{I}} \lambda_i\KK[Z_i]
\]
as a multigraded $\KK$-vector space. This direct sum carries the structure of $S$-module and has therefore a well-defined depth. The \emph{Stanley depth} of $M$, $\sdepth M$, is defined to be the maximal depth of a Stanley decomposition of $M$.
Similarly, the \emph{Stanley projective dimension} $\spdim M$ of $M$ is defined as the minimal projective dimension of a Stanley decomposition of $M$.
Note that $\spdim M + \sdepth M = n$ by the Auslander-Buchsbaum formula.

The Stanley conjecture states that
\[
\sdepth\ M \geq \depth\ M
\]
or equivalently
\[\spdim\ M \leq \pdim\ M. \]

\subsection{Generalities about lattices}
We recall some basic notions from lattice theory.
A \emph{meet-semilattice} $L$ is a partially ordered set (poset), in which every two elements $a,b \in L$ have a unique greatest lower bound $a \wedge b$.
Similarly, a \emph{join-semilattice} $L$ is a poset, in which every two elements $a,b \in L$ have a unique least upper bound $a \vee b$.
Finally, a \emph{lattice} is a poset which is both a meet-semilattice and a join-semilattice.
In the present paper, we consider only finite lattices, hence in the sequel \emph{we assume all lattices to be finite.}
Every finite lattice $L$ has unique minimal and maximal elements, denoted by $\hat{0}_L$ and $\hat{1}_L$.
An \emph{atom} is an element $a \in L$ that covers the minimal element.
A lattice $L$ is called \emph{atomistic}, if every element of $L$ can be written as a join of atoms.
An element $b \in L \setminus\set{\hat{1}_L}$ is called \emph{meet-irreducible}, if it cannot be written as the meet of two strictly greater elements.
Equivalently, an element is meet-irreducible, if it is covered by exactly one other element.
Join-irreducible elements are defines analogously.

For $a \in L$ we use the following notations:
\begin{align*}
	L_{\leq a} &:= \set{b \in L \with b \leq a}, \\
	L_{< a} &:= \set{b \in L \with b < a}.\\
\end{align*}

\section{Lattice theoretical preparations}

\subsection{Lattices in the boolean algebra}
Let $\bool{k}$ denote the lattice of all subsets of $\set{1,2,\dotsc,k}$, i.e. the boolean algebra on $k$ atoms.
In view of \Cref{thm:origin} we are interested in the existence of surjective join-preserving maps between finite lattices.
By the following proposition, we can reduce this to considering inclusions between subsets of $\bool{k}$.
The first part of this proposition was already observed in \cite[p. 5]{M}.
\begin{proposition}\label{prop:embedding}
\begin{enumerate}
	\item For every lattice $L$ with $k$ join-irreducible elements, there exists an embedding $j: L \hookrightarrow \bool{k}$ which respects the meet-operation and the minimal and maximal element.
	\item Let $L, L'$ be two atomistic lattices on $k$ atoms.
	Then there exists a surjective join-preserving map $L \rightarrow L'$ if and only if $j'(L') \subseteq j(L) \subseteq \bool{k}$ for suitable embeddings $j: L \hookrightarrow \bool{k}, j': L' \hookrightarrow \bool{k}$ which respect the meet-operation and the minimal and maximal element.
	\end{enumerate}
\end{proposition}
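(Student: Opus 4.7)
The plan is to prove part (1) by the standard embedding via join-irreducibles, and then to derive part (2) by a closure argument inside $\bool{k}$.

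For part (1), I would set
\[ j(a) \defa \set{x \in J(L) \with x \leq a}, \]
where $J(L)$ denotes the set of join-irreducibles of $L$. Injectivity follows because every $a \in L$ is the join of the join-irreducibles below it; meet-preservation and the correct behavior on $\hat{0}_L$ and $\hat{1}_L$ are immediate. Since $|J(L)| = k$, fixing a labeling $J(L) \leftrightarrow [k]$ gives the desired embedding; this is the argument indicated in \cite{M}.

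For part (2), since $L$ and $L'$ are atomistic, the join-irreducibles are precisely the atoms, so part (1) identifies $L$ and $L'$ with meet-subsemilattices of $\bool{k}$ that contain $\emptyset$, $[k]$ and all $k$ singletons. For the "$\Rightarrow$" direction, given a surjective join-preserving $\varphi \colon L \to L'$, the first key step is to show that $\varphi$ restricts to a bijection between atoms: for each atom $a' \in L'$ choose a preimage $a \in L$, expand $a$ as a join of atoms of $L$, and combine join-preservation with the join-irreducibility of $a'$ to produce an atom $b \in L$ with $\varphi(b) = a'$. This yields an injective section from atoms of $L'$ into atoms of $L$, which is a bijection by cardinality. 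After relabeling the two embeddings consistently along this bijection, the inclusion $j'(L') \subseteq j(L)$ is verified by exhibiting, for each $a' \in L'$, the preimage $\tilde{a} \defa \bigvee \set{b \in L \with \varphi(b) \leq a'}$; using join-preservation one checks $\varphi(\tilde{a}) = a'$ and, because an atom of $L$ lies below $\tilde{a}$ iff its image lies below $a'$, that $j(\tilde{a}) = j'(a')$.

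For the "$\Leftarrow$" direction, given $j'(L') \subseteq j(L) \subseteq \bool{k}$, I would define $\varphi \colon L \to L'$ by requiring $j'(\varphi(a)) = \pi(j(a))$, where $\pi(V) \defa \bigcap \set{W \in j'(L') \with V \subseteq W}$. This is well-defined because $j'(L')$ is closed under intersection and contains $[k]$, and surjectivity of $\varphi$ is immediate since $\pi$ fixes $j'(L')$ pointwise. For join-preservation I would use that in any meet-subsemilattice $M \subseteq \bool{k}$ containing all singletons, the join of $V_1, V_2$ in $M$ equals the closure $\sigma_M(V_1 \cup V_2) \defa \bigcap \set{W \in M \with W \supseteq V_1 \cup V_2}$. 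The required identity then takes the form
\[ \pi\bigl(\sigma_L(V_1 \cup V_2)\bigr) = \sigma_{L'}\bigl(\pi(V_1) \cup \pi(V_2)\bigr), \]
which I would derive from the monotonicity and idempotence of $\sigma_{L'}$ together with the pointwise inclusion $\sigma_L(U) \subseteq \sigma_{L'}(U)$ (itself a consequence of $j'(L') \subseteq j(L)$, since intersecting over a smaller family gives a larger result). I expect this last verification to be the main obstacle, since it requires juggling two distinct closure operators and three different "join-like" operations simultaneously; the closure formalism is what keeps the bookkeeping tractable.
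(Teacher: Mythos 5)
Your proposal is correct and follows essentially the same route as the paper: the constructions $\tilde{a} = \bigvee\set{b \with \varphi(b)\leq a'}$ and $\pi(V)=\bigcap\set{W \in j'(L')\with V\subseteq W}$ are precisely the two halves of the adjunction between surjective join-preserving maps and injective meet-preserving maps that the paper isolates as \Cref{lemma:surjinj} and then invokes to get both parts at once; your part~(1) via $j(a)=\set{x\in J(L)\with x\leq a}$ is the same map the paper obtains from a surjection $\bool{k}\twoheadrightarrow L$. The only difference is presentational: you re-derive the adjunction inline in closure-operator language and spell out the atom-relabeling bookkeeping (establishing the bijection on atoms so that $j'(L')\subseteq j(L)$ is literal, not just up to relabeling), which the paper glosses with \qq{immediate from part (1) and the preceding lemma.}
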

In the sequel, we will consider all our atomistic lattices as being meet-subsemilattices of $\bool{k}$.
Moreover, we make the following convention for the rest of the paper: \emph{Whenever we have an inclusion of lattices $L' \subseteq L$, we assume that the inclusion respects the meet-operation and the minimal and maximal element.}
With a little extra work, one can show that the embedding $L \hookrightarrow \bool{k}$ is unique up to automorphisms of $\bool{k}$.
If $L$ has less than $k$ join-irreducible elements, then there exists still an embedding, but it is no longer unique.

For the proof of \Cref{prop:embedding} we use the following lattice-theoretic result. It is probably well-known, but as we could not locate a reference we provide a proof.

\begin{lemma}\label{lemma:surjinj}
	Let $L$ and $L'$ be two finite lattices. The following are equivalent:
	\begin{enumerate}
		\item There exists a surjective join-preserving map $\phi: L \twoheadrightarrow L'$ which is injective on the minimal element.
		\item There exists an injective meet-preserving map $j: L' \hookrightarrow L$, such that the minimal element of $L$ is in the image of $j$.
	\end{enumerate}
	In this situation, it holds that $\phi(j(x')) = x'$ and $j(\phi(x))\geq x$ for $x \in L, x' \in L'$.
\end{lemma}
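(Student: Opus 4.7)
My plan is to read this as a statement about adjoint pairs of order-preserving maps. For finite lattices, a map $\phi:L\to L'$ preserves all joins if and only if it has a right adjoint $j:L'\to L$, given by $j(x')=\bigvee\{y\in L\with\phi(y)\leq x'\}$, and dually a map $j:L'\to L$ preserves all meets if and only if it has a left adjoint $\phi:L\to L'$, given by $\phi(x)=\bigwedge\{x'\in L'\with x\leq j(x')\}$. Once the two adjoints are in place, the identities $\phi(j(x'))=x'$ and $j(\phi(x))\geq x$ at the end of the lemma will fall out of the unit/counit inequalities of the adjunction together with the surjectivity of $\phi$ (which forces $\phi\circ j=\mathrm{id}_{L'}$).

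For the implication $(1)\Rightarrow(2)$, I would define $j$ as the right adjoint above and then check the three required properties. That $j$ preserves meets is the standard fact that right adjoints preserve limits. For injectivity: since $\phi$ is surjective, for any $x'\in L'$ pick $y\in L$ with $\phi(y)=x'$; then $y\leq j(x')$ by definition, and applying $\phi$ gives $x'\leq\phi(j(x'))$, while the adjunction gives $\phi(j(x'))\leq x'$, so $\phi\circ j=\mathrm{id}_{L'}$ and hence $j$ is injective. Finally, to see that $\hat 0_L\in j(L')$, I compute $j(\hat 0_{L'})=\bigvee\phi^{-1}(\hat 0_{L'})$, which equals $\hat 0_L$ precisely because $\phi$ is injective on the minimal element (and $\phi(\hat 0_L)=\hat 0_{L'}$ since $\phi$ preserves joins, including the empty one).

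For the converse $(2)\Rightarrow(1)$, I would dually take $\phi$ to be the left adjoint. Join preservation is the general left-adjoint property. Surjectivity is a direct computation: if $j$ is meet-preserving and injective, then $j(x')\geq j(y')$ forces $x'\geq y'$ (applying $j$ to $x'\wedge y'=y'$), so $y'$ is the minimum of $\{x'\with j(x')\geq j(y')\}$ and therefore $\phi(j(y'))=y'$. For injectivity on the minimum: first I would show $j(\hat 0_{L'})=\hat 0_L$, using that $\hat 0_L$ lies in the image of $j$ together with the adjunction $\phi\circ j=\mathrm{id}_{L'}$ applied to the element $z$ with $j(z)=\hat 0_L$; then $\phi(x)=\hat 0_{L'}$ gives $x\leq j(\hat 0_{L'})=\hat 0_L$ via the adjunction, so $x=\hat 0_L$.

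I do not foresee any serious obstacle; the only point that requires a little care is the handling of the distinguished minimal element, where the two hypotheses ("$\phi$ injective at $\hat 0$" and "$\hat 0_L\in j(L')$") must be shown to translate into each other via the relation $j(\hat 0_{L'})=\hat 0_L$. Everything else is a clean application of the finite-lattice adjoint functor theorem, and the two closing identities $\phi(j(x'))=x'$ and $j(\phi(x))\geq x$ are recorded along the way in each direction of the proof.
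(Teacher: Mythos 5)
Your proposal is correct and takes essentially the same route as the paper: you construct $j$ as $j(x')=\bigvee\{y\in L:\phi(y)\le x'\}$ and $\phi$ as $\phi(x)=\bigwedge\{x'\in L':j(x')\ge x\}$, verify the unit/counit inequalities, and handle the bottom element by translating the two conditions into $j(\hat0_{L'})=\hat0_L$ — exactly the paper's argument, merely phrased explicitly in the language of Galois connections (you do not actually need to invoke preservation of the empty join to get $\phi(\hat0_L)=\hat0_{L'}$; it already follows from monotonicity and surjectivity of $\phi$, as the paper's chain of inequalities shows).
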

\begin{proof}
\begin{asparaenum}
	\item[(1)$\implies$(2)] Define $j: L' \rightarrow L$ as $j(x') := \bigvee\set{x \in L \with \phi(x) \leq x'}$, cf. \cite[Section 4.1]{lcm}.
		Then 
		\[ j(\phi(x)) = \bigvee\set{y \in L \with \phi(y) \leq \phi(x)}) \geq x\]
		and
		\[ \phi(j(x')) = \phi(\bigvee\set{y \in L \with \phi(y) \leq x'}) = \bigvee\set{y' \in \phi(L) \with y' \leq x'} = x'.\]
		In the last equality we used that $\phi$ is surjective. Hence $j$ is injective.
		
		Further, $j$ is clearly monotonic and thus $j(x' \wedge y') \leq j(x') \wedge j(y')$ for $x', y' \in L'$. On the other hand, 
		\[ j(x' \wedge y') = j(\phi(j(x')) \wedge \phi(j(y'))) \geq j(\phi(j(x') \wedge j(y'))) \geq j(x') \wedge j(y') \]
		thus $j$ preserves the meet. For the middle inequality we use that $\phi$ is monotonic, so $\phi(x) \wedge \phi(y) \geq \phi(x \wedge y)$ for $x,y \in L$.
		It remains to show that $\hat{0}_L$ is in the image of $j$. For this we compute that
		\[ \hat{0}_{L'} \leq \phi(\hat{0}_L) \leq \phi(j(\hat{0}_{L'})) = \hat{0}_{L'}. \]
		As $\phi$ is injective on $\hat{0}_L$, it follows that $j(\hat{0}_{L'}) = \hat{0}_L$.
	\item[(2)$\implies$(1)] Define $\phi: L \rightarrow L'$ as $\phi(x) := \bigwedge\set{x' \in L' \with j(x') \geq x}$.
	One shows analogously that $\phi$ preserves the join, $j(\phi(x)) \geq x$ and $\phi(j(x')) = x'$. In particular, $\phi$ is surjective. Moreover, as $\hat{0}_L$ is in the image of $j$ it holds that $j(\hat{0}_{L'}) = \hat{0}_L$. Hence if $\phi(x) = \hat{0}_{L'}$, then $\hat{0}_L = j(\hat{0}_{L'}) = j(\phi(x)) \geq x$, so $x = \hat{0}_L$.
\end{asparaenum}
\end{proof}

\begin{proof}[Proof of \Cref{prop:embedding}]
	\begin{enumerate}
		\item Let $L$ be a lattice with at most $k$ join-irreducible elements.
		There exists a surjective join-preserving map $\phi: \bool{k} \twoheadrightarrow L$ mapping atoms to join-irreducible elements.
		Hence the map $j: L \rightarrow \bool{k}$ constructed in the preceding lemma gives an embedding of $L$. 
		
		\item This is immediate from part (1) and the preceding lemma.
	\end{enumerate}
\end{proof}

\begin{definition}
	We define the \emph{rank}, $\rk_L a$ of an element $a \in L$ as
	\[ \rk_L a \defa \#\set{b \in L \with b \text{ join-irreducible, } b \leq a} \]
\end{definition}
The rank is the restriction of the usual rank function on $\bool{k}$ to $L \subseteq \bool{k}$.
However, in general it is \emph{not} a rank function on $L$ in the poset-theoretic sense.

\subsection{The lcm lattice}
Let $G \subset S$ be a finite set of monomials.
We write $L_G$ for the lattice of all least common multiples of subsets of $G$, together with a minimal element $\hat{0}$.
For a monomial ideal $I \subseteq S$, we set $L_I := L_G$ for a minimal monomial generating set $G$ of $I$.
Note that $L_G$ is atomistic if and only if $G$ is the \emph{minimal} generating set of the ideal generated by it.

The following theorem recalls the central relation between an ideal and its lcm lattice.
\begin{theorem}[Theorem 2.1, \cite{GPW}]\label{thm:gpw}
	Let $G \subset S$ be a finite set of monomials, $L = L_G$, and $I = (G)$ be the ideal generated by them.
	Let further $i > 0$ and $m \in L$. Then
	\[ \beta_{i,m}^S(S/I) = \dim_\KK \tilde{H}_{i-2}(L_{< m}; \KK) \]
	and $\beta_{i,m}^S(S/I) = 0$ for multidegrees $m \notin L$.
\end{theorem}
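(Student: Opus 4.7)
The plan is to compute the multigraded Tor modules $\mathrm{Tor}_i^S(S/I, \KK)_m$ via the Taylor complex of $G$, identify their homology with that of an explicit simplicial complex $K_m$ on the generators, and then apply Bj\"orner's Crosscut Theorem to identify $K_m$ up to homotopy with the order complex of $L_{<m}$. Here $L_{<m}$ must be read in the sense of the open interval $(\hat{0}_L, m)$, since $\hat{0}_L$ would otherwise be a cone point and kill all reduced homology.

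To set this up, recall that the Taylor complex $T_\bullet = T_\bullet(G)$ is an $\NN^n$-graded free resolution of $S/I$ with basis $\set{e_F \with F \subseteq G}$, where $e_F$ sits in homological degree $\card{F}$ and multidegree $\lcm(F)$; thus $\beta_{i,m}^S(S/I) = \dim_\KK H_i\bigl((T_\bullet \otimes_S \KK)_m\bigr)$. Introduce the two simplicial complexes
\[ K_m \defa \set{F \subseteq G \with \lcm(F) \text{ properly divides } m}, \qquad K'_m \defa \set{F \subseteq G \with \lcm(F) \mid m}. \]
Then $K'_m$ is the full simplex on $\set{g \in G \with g \mid m}$, hence contractible. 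A direct inspection identifies the quotient chain complex $\tilde{C}_\bullet(K'_m; \KK) / \tilde{C}_\bullet(K_m; \KK)$ (whose basis is $\set{e_F \with \lcm(F) = m}$) with $(T_{\bullet+1} \otimes_S \KK)_m$; the long exact sequence of the pair $(K'_m, K_m)$, together with the contractibility of $K'_m$, then yields the intermediate formula $\beta_{i,m}^S(S/I) = \dim_\KK \tilde{H}_{i-2}(K_m; \KK)$.

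If $m \notin L$ there is no $F$ with $\lcm(F) = m$, so $K_m = K'_m$ is a simplex and $\beta_{i,m}^S(S/I) = 0$. For $m \in L$, the main remaining task — and the expected main obstacle — is the homotopy equivalence $K_m \simeq \Delta(L_{<m})$. I would establish this via the Crosscut Theorem applied to the open interval $(\hat{0}_L, m)$ with respect to its atoms, which are precisely the generators $g \in G$ with $g < m$: a subset $A$ of these atoms admits an upper bound strictly below $m$ if and only if $\lcm(A) < m$, which is exactly the condition $A \in K_m$. Hence the crosscut complex coincides with $K_m$, and the Crosscut Theorem delivers the required homotopy equivalence with the order complex of $(\hat{0}_L, m) = L_{<m}$, completing the proof.
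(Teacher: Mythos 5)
The paper does not actually prove \Cref{thm:gpw} --- it is imported directly from \cite{GPW} --- so there is no in-paper proof to compare against, and your argument has to be judged on its own merits. Your outline is in fact the standard argument (essentially the original one of Gasharov, Peeva, and Welker): compute $\mathrm{Tor}$ via the Taylor complex, identify the degree-$m$ strand with the relative chain complex of the pair $(K'_m, K_m)$, use the contractibility of the simplex $K'_m$ to shift degrees, and finish with a crosscut or nerve argument. The bookkeeping --- the identification $(T_{i}\otimes_S\KK)_m \cong \tilde{C}_{i-1}(K'_m)/\tilde{C}_{i-1}(K_m)$, the resulting shift $\beta_{i,m}=\dim_\KK\tilde{H}_{i-2}(K_m)$, and the vanishing for $m\notin L$ --- all checks out.

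The one place that needs repair is the claim that the atoms of $(\hat{0}_L,m)$ ``are precisely the generators $g\in G$ with $g<m$''. The theorem is stated for an arbitrary finite set of monomials $G$, not necessarily the minimal generating set of $I$. If $G$ contains a redundant monomial $g$ divisible by some other $g'\in G$, then $g$ lies in your set but is \emph{not} an atom of $L$, since $\hat{0}_L<g'<g$. So $\set{g\in G\with g<m}$ can be a proper superset of the set of atoms of $(\hat{0}_L,m)$, and the version of the Crosscut Theorem quoted in the paper --- which is phrased with respect to the atoms --- does not literally apply to your set. The fix is cheap: either invoke the more general form of the crosscut theorem (the paper explicitly remarks that Björner's survey contains such a version), or run a nerve-lemma argument directly --- the subcomplexes $\Delta(\set{x\in L\with g\le x<m})$ for $g\in G$ with $g<m$ cover $\Delta((\hat{0}_L,m))$ because every element of the open interval lies above some such $g$, every nonempty finite intersection is a cone with apex $\lcm(F)$ and hence contractible, and the nerve of this cover is exactly your $K_m$. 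Alternatively you could first pass to the minimal generating set $G'\subseteq G$, where your statement about atoms is correct, but then one must also argue that replacing $L_G$ by $L_{G'}$ changes neither side of the formula, which is not free.
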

Here, $\tilde{H}_{i-2}(L_{< m}; \KK)$ denotes the reduced simplicial homology of the order complex of $L_{< m} \setminus \set{\hat{0}_L}$.
Moreover, $\beta_{i,m}^S(S/I) := \dim_\KK \mathrm{Tor}^S_i(S/I, \KK)_m$ is the multigraded Betti number of $S/I$ over $S$ in degree $m$.
We write $\beta_i^S(S/I) := \sum_{m \in L} \beta_{i,m}^S(S/I)$ for the total Betti numbers of $S/I$.

To simplify the notation, we define $\tilde{H}_{j}(L,m) := \tilde{H}_{j}({L_{< m}}; \KK)$ for $m\in L$.
Moreover, we set
\[ \beta_i(L) := \sum_{m \in L} \dim_\KK \tilde{H}_{i-2}(L,m) \]
and $\mathbf{\beta}(L) := (\beta_{-1}(L), \beta_{0}(L), \dots, \beta_r(L))$,
where $r$ is the rank of the maximal element of $L$.
By \Cref{thm:gpw}, we have $\beta_i(L) = \beta^S_i(S/I)$ for every monomial ideal $I \subseteq S$ with $L \cong L_I$.
We further define the \emph{projective dimension} of $L$ as 
		\[ \pdim_\Q L := \max\set{i\with \beta_i(L) \neq 0}.\]
Equivalently, $\pdim_\Q L$ equals the projective dimension of $S/I$ over $S$ for any monomial ideal $I \subseteq S$ (in some polynomial ring $S$ over $\KK$) with $L = L_I$.
Here, the subscript $\Q$ shall remind us of \qq{quotient}.
For our purposes it turns out to be more convenient to work with \emph{crosscut complexes} instead of the order complex of the lattice.
We recall the definition.
Let $A \subset L$ be the set of atoms of $L$.
The \emph{crosscut complex} of $L$ (with respect to $A$) is the simplicial complex $\CC{L} \subset 2^A$, defined as follows:
A set $E \subseteq A$ lies in $\CC{L}$ if the join $\bigvee\set{a \with a\in E}$ is \emph{not} the maximal element of $L$.
If $L$ is atomistic, then the embedding $L \subseteq \bool{k}$ gives a natural inclusion $L \setminus \set{\hat{1}_L} \subseteq \CC{L}$.

By the following theorem, we can use $\CC{L}$ to compute the homology of $L$.
\begin{theorem}[Crosscut theorem, Theorem 10.8 in \cite{bj}]
The order complex of $L \setminus \set{\hat{0}_L, \hat{1}_L}$ is homotopy equivalent to $\CC{L}$.
\end{theorem}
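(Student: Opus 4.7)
The plan is to apply the Nerve Lemma to a natural cover of the order complex of $L \setminus \set{\hat{0}_L, \hat{1}_L}$ indexed by the atoms $A$ of $L$. Since the statement is purely topological, the proof is independent of the rest of the paper and relies only on the poset structure of a finite lattice.

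For each atom $a \in A$, consider the subposet
\[ V_a \defa \set{x \in L \with a \leq x < \hat{1}_L}, \]
and let $C_a$ denote its order complex, which is naturally a subcomplex of the order complex of $L \setminus \set{\hat{0}_L, \hat{1}_L}$. Since every element $x \in L$ with $x > \hat{0}_L$ lies above at least one atom (take a minimal element of $\set{y \with \hat{0}_L < y \leq x}$), any chain in $L \setminus \set{\hat{0}_L, \hat{1}_L}$ is contained in $C_a$ for any atom $a$ below the minimum of the chain. Thus $\set{C_a \with a \in A}$ is a cover of the order complex.

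Next I would show that all nonempty intersections of these subcomplexes are contractible. Each $C_a$ itself is contractible because $V_a$ has $a$ as unique minimum, so $C_a$ is a cone with apex $a$. More generally, for atoms $a_1, \dotsc, a_r$ one has
\[ V_{a_1} \cap \dotsb \cap V_{a_r} = \set{x \in L \with a_1 \vee \dotsb \vee a_r \leq x < \hat{1}_L}. \]
This intersection is nonempty exactly when $a_1 \vee \dotsb \vee a_r \neq \hat{1}_L$, which is precisely the condition for $\set{a_1, \dotsc, a_r}$ to be a face of $\CC{L}$. Moreover, in that case the intersection has $a_1 \vee \dotsb \vee a_r$ as unique minimum, so the corresponding order complex (which coincides with $C_{a_1} \cap \dotsb \cap C_{a_r}$) is again a contractible cone.

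Consequently, $\set{C_a}_{a \in A}$ is a good cover of the order complex whose nerve is, by construction, exactly the crosscut complex $\CC{L}$. The Nerve Lemma then delivers the desired homotopy equivalence. The main thing to check is that the version of the Nerve Lemma one invokes really applies in the simplicial category, but since every nonempty intersection is a subcomplex that is a cone (and in particular collapsible), this is standard and does not constitute a substantive obstacle; the whole difficulty of the theorem is concentrated in the observation that the combinatorial condition \qq{$\bigvee E \neq \hat{1}_L$} is the same as the topological condition \qq{the corresponding intersection is nonempty.}
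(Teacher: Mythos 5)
Your proof is correct and is essentially the standard nerve-lemma argument for the crosscut theorem, which is what the cited reference \cite{bj} uses; the paper itself does not supply a proof but refers to Björner's survey. The one simplification worth flagging (and which you exploit implicitly) is that since the crosscut here is the set of atoms, each $V_a$ already contains $a$ as its minimum, so the ``bounded above or below'' clause in Björner's general crosscut condition collapses to the single requirement $\bigvee E \neq \hat{1}_L$, and this is exactly what makes the nerve of your cover coincide with $\CC{L}$.
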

In fact, in \cite{bj} a much more general version of this theorem is given, but we will only need the variant stated here.

\subsection{The Scarf complex of a lattice}
Recall that the \emph{Scarf complex} $\Delta_I$ of a monomial ideal $I \subset S$ 
with minimal generators $m_1, \dots, m_k$ is the simplicial complex
\[ \Delta_I \defa \set{ \sigma \subset [k] \with m_\sigma = m_\tau \implies \sigma = \tau } \]
where $m_\sigma := \lcm(m_i \with i \in \sigma)$.
The analogous definition for lattices is the following:
\begin{definition}[\cite{M}]
The \emph{Scarf complex} of a lattice $L$ is the subset of those elements of $L$, which can be written as a join of atoms in a unique way.
\end{definition}
Consider an embedding $L \subseteq \bool{k}$ (where $k$ is the number of join-irreducible elements of $L$).
Then the Scarf complex can be identified with the largest simplicial complex contained in $L$.
In particular, an element $a \in L$ lies in the Scarf complex, if and only if $L$ contains every element $b \in \bool{k}$ with $b \leq a$.

It is easy to see that if $L$ is isomorphic to the lcm-lattice of some ideal $I$, then the Scarf complex of $L$ equals the Scarf complex of $I$.

\subsection{Factorization of maps}

The following lemma can be seen as a slight refinement of \cite[Lemma 4.4]{lcm} via \Cref{lemma:surjinj}.
\begin{lemma}\label{lem:ordnung}
Let $L$ be a finite $\wedge$-semilattice and $L' \subseteq L$ be a $\wedge$-subsemilattice. 
Let $\set{a_1, a_2, \dotsc, a_r} = L \setminus L'$ be ordered by decreasing rank, i.e. $\rk_L a_i \geq \rk_L a_{i+1}$ for all $i$.
Then for all $1 \leq j \leq r$, the set $L \setminus \set{a_1, \dots, a_j} \subset L$ is also a $\wedge$-subsemilattice.
\end{lemma}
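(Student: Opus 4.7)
The plan is to fix $j$ and show by contradiction that $L_j := L \setminus \{a_1, \ldots, a_j\}$ is closed under the meet operation of $L$. Specifically, I would assume that there exist $x, y \in L_j$ with $x \wedge_L y = a_i$ for some $1 \leq i \leq j$ and aim to derive a contradiction from this configuration.

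The first step is a simple case split based on the decomposition $L_j = L' \cup \{a_{j+1}, \ldots, a_r\}$. If both $x$ and $y$ lie in $L'$, then $x \wedge y \in L'$ by hypothesis, which contradicts $a_i \in L \setminus L'$. So at least one of $x, y$, say $x$, must equal $a_k$ for some $k > j$. Since $k > j \geq i$, the elements $a_k$ and $a_i$ are distinct, so $a_i = x \wedge y < a_k$ is a \emph{strict} inequality in $L$.

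The core of the argument, and the point where the ordering hypothesis finally enters, is to combine this strict inequality with the decreasing-rank ordering to force a contradiction. The ordering yields $\rk a_k \leq \rk a_j \leq \rk a_i$, so it suffices to prove that the rank function is strictly order-preserving, i.e.\ $u < v$ implies $\rk u < \rk v$. I would establish this using the standard fact that in any finite lattice every element equals the join of the join-irreducibles below it; consequently, two elements with the same set of join-irreducibles beneath them must coincide, which promotes any strict order relation to a strict rank inequality. The only subtlety, and the main obstacle to watch for, is that the lemma is stated for a finite meet-semilattice while this fact needs $L$ to actually be a lattice; in the paper's setting, however, $L$ is always an lcm lattice with a top element and hence bounded, so this is automatic. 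With strict monotonicity of $\rk$ in hand, the chain $\rk a_i < \rk a_k \leq \rk a_i$ supplies the desired contradiction, completing the proof.
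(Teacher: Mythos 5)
Your proof is correct and takes essentially the same route as the paper's: both hinge on strict rank-monotonicity combined with the decreasing-rank ordering, the paper by showing $a_1$ cannot be written as a meet of two other elements of $L$ and then inducting, you by a direct case analysis that produces the same rank contradiction. Your caveat about lattices versus meet-semilattices is unnecessary, as strict rank-monotonicity already holds in any finite $\wedge$-semilattice (a minimal element of $\{s \le c \with s \not\le u\}$, where $u < c$ is a covering, is automatically join-irreducible), though noting that the paper only ever applies the lemma to subsemilattices of $\bool{k}$ also resolves it.
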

\begin{proof}
	By induction, it is enough to show that $L \setminus \set{a_1}$ is a meet-subsemilattice of $L$. 
	We claim that $a_1$ cannot be written as a meet of two other elements of $L$.
	To the contrary, if $a_1 = b \wedge c$, then $\rk_L b, \rk_L c > \rk_L a_1$.
	Hence by the choice of $a_1$ we have that $b,c \in L'$ and thus $b \wedge c = a_1 \in L'$, a contradiction.
	
	But if $a_1$ cannot be written as a meet of two other elements of $L$, then $L \setminus \set{a_1}$ is closed under taking the meet and the claim follows.
\end{proof}

\newcommand{\ee}{\mathbf{e}}
\begin{lemma}\label{prop:onestep}
	Let $L$ be a finite lattice
	and let $a \in L$ be a meet-irreducible element which is contained in the Scarf complex of $L$.
	Then it holds that either 
	\begin{align*}
	\beta(L\setminus\set{a}) &= \beta(L) \\
	\text{or}  \qquad \beta(L\setminus\set{a}) & = \beta(L) - \ee_p - \ee_{p+1}
	\end{align*}
	where $p := \rk_L a$ and $\ee_p$ denotes the $p$-th unit vector.
\end{lemma}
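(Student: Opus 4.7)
The plan is to compute $\beta(L)-\beta(L\setminus\{a\})$ by pinpointing which summands of $\beta_i(L)=\sum_{m\in L}\dim_\KK\tilde H_{i-2}(L,m)$ change when $a$ is removed. First I will invoke \Cref{lem:ordnung}: since $a$ is meet-irreducible, $L\setminus\{a\}$ is still a meet-subsemilattice, and hence a finite lattice with the same $\hat 0$ and $\hat 1$ as $L$. Let $c$ denote the unique cover of $a$ in $L$, and let $E_a$ be the unique set of atoms with $\bigvee E_a=a$; this exists and has size $p$ because $a$ lies in the Scarf complex, and the same Scarf property forces $L_{\leq a}\cong\bool{p}$. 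The comparison will be carried out on crosscut complexes, using \Cref{thm:gpw} together with the Crosscut theorem to identify $\tilde H_*(L,m)\cong\tilde H_*(\CC{L_{\leq m}})$ and likewise for $L\setminus\{a\}$.

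The key step is a locality claim: for every $m\in L\setminus\{a\}$, one has $\CC{L_{\leq m}}=\CC{(L\setminus\{a\})_{\leq m}}$, with the single exception $m=c$. The join of a set $E$ of atoms can change on passage from $L$ to $L\setminus\{a\}$ only if its join in $L$ equals $a$, and by Scarf uniqueness the only such $E$ is $E_a$; for this $E_a$ the join jumps from $a$ to $c$. Running this through the definition of the crosscut complex shows that $E_a$ is a face of $\CC{L_{\leq m}}$ whenever $a<m$ and a face of $\CC{(L\setminus\{a\})_{\leq m}}$ whenever $c<m$, so the two complexes agree except at $m=c$, where
\[
\CC{(L\setminus\{a\})_{\leq c}}=\CC{L_{\leq c}}\setminus\{E_a\}.
\]
Moreover $E_a$ is a facet of $\CC{L_{\leq c}}$: any proper superset $E\supsetneq E_a$ among atoms below $c$ has $\bigvee E\in[a,c]=\{a,c\}$, and Scarf uniqueness excludes the value $a$, forcing $\bigvee E=c$ and $E\notin\CC{L_{\leq c}}$.

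It remains to assemble the contributions to $\beta(L)-\beta(L\setminus\{a\})$. The summand at $m=a$ disappears entirely; since $\CC{L_{\leq a}}$ is the boundary of a $(p-1)$-simplex, and therefore a $(p-2)$-sphere, this contributes exactly $+\ee_p$. For the summand at $m=c$, write $K=\CC{L_{\leq c}}$ and $K'=K\setminus\{E_a\}$, so $K/K'\simeq S^{p-1}$. The long exact sequence of the pair collapses to
\[
0\to\tilde H_{p-1}(K')\to\tilde H_{p-1}(K)\to\KK\xrightarrow{\partial}\tilde H_{p-2}(K')\to\tilde H_{p-2}(K)\to 0,
\]
with $\tilde H_n(K')\cong\tilde H_n(K)$ in every other degree. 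Since $\KK$ is one-dimensional, $\partial$ is either zero or injective: if $\partial=0$, then $\dim\tilde H_{p-1}(K)-\dim\tilde H_{p-1}(K')=1$ while $\tilde H_{p-2}$ is unchanged, contributing $+\ee_{p+1}$; if $\partial$ is injective, then $\dim\tilde H_{p-2}(K')-\dim\tilde H_{p-2}(K)=1$ while $\tilde H_{p-1}$ is unchanged, contributing $-\ee_p$. Combined with the fixed $+\ee_p$ from $m=a$, the two cases produce $\beta(L)-\beta(L\setminus\{a\})=\ee_p+\ee_{p+1}$ and $\beta(L)-\beta(L\setminus\{a\})=0$ respectively, matching the asserted dichotomy exactly.

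The topology in the last step is essentially forced. The main (mild) obstacle is the bookkeeping in the locality claim, which relies simultaneously on meet-irreducibility (to make $c$ unique and the interval $[a,c]$ trivial) and on the Scarf hypothesis (to single out $E_a$ and to identify $L_{\leq a}$ with $\bool{p}$); once these combine to isolate $E_a$ as the sole source of discrepancy, everything else falls out.
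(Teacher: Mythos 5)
Your proof follows essentially the same route as the paper's: both compare the crosscut complexes $\CC{L_{\leq m}}$ for $L$ and $L' = L\setminus\{a\}$, localize the discrepancy to $m=a$ and $m=a_+$, use the Scarf hypothesis to identify $\CC{L_{\leq a}}$ as the boundary of a $(p-1)$-simplex and $E_a$ as the single facet removed from $\CC{L_{\leq a_+}}$, and then trade off the changes at these two spots. Your long-exact-sequence analysis of the pair $(\CC{L_{\leq a_+}}, \CC{L'_{\leq a_+}})$ is a welcome sharpening of the paper's informal ``removing a facet either decreases the $(p-1)$-st Betti number or increases the $(p-2)$-nd one.''

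One point to tighten. Your locality claim that $\CC{L_{\leq m}}=\CC{L'_{\leq m}}$ for all $m>c$ tacitly assumes $L$ and $L'$ have the same atoms, which fails exactly when $a$ is itself an atom (i.e.\ $p=1$). In that case the two crosscut complexes live on different vertex sets, so they cannot literally be equal; one must instead argue that their homologies agree --- e.g.\ by relabelling $a \mapsto a_+$ when $a_+$ becomes an atom of $L'$, or, when some other atom $b<a_+$ exists, by observing that $b$ dominates $a$ in $\CC{L_{\leq m}}$ so the deletion of $a$ is a homotopy equivalence onto $\CC{L'_{\leq m}}$. The paper treats the atom sub-case separately for this reason (although its own assertion that $a_+$ must become an atom of $L'$ is also not always correct: in an atomistic $L$ it never is, since another atom lies below $a_+$). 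Since the paper only invokes the lemma with $p\geq 2$, the omission is harmless in context, but as a proof of the lemma as stated you should address $p=1$.
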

\begin{proof}
	As $a$ is meet-irreducible, there exists a unique element $a_+ \in L$ covering $a$, i.e. the meet of all elements strictly greater than $a$.
	Let $L' := L \setminus\set{a}$.
	
	We compute $\tilde{H}_i(L,m)$ for every $m \in L$.
	First, for $m \ngeq a$ we have trivially $\below{m}{L}= \below{m}{L'}$.
	Next, if $m > a_+$ and $a$ is not an atom, then we note that $\CC{\beloweq{m}{L}} = \CC{\beloweq{m}{L'}}$ and hence $\tilde{H}_i(L,m) = \tilde{H}_i(L',m)$.
	On the other hand, if $m > a_+$ and $a$ is an atom, then $a_+$ is an atom of $L'$ and it holds again that $\CC{\beloweq{m}{L}} = \CC{\beloweq{m}{L'}}$ and $\tilde{H}_i(L,m) = \tilde{H}_i(L',m)$ , where the crosscut complexes are taken with respect to the respective sets of atoms.
	
	For $m = a$, our assumption on $a$ implies that $\CC{\beloweq{a}{L}}$ is the boundary of a $(p-1)$-simplex, hence removing $a$ decreases the $(p-2)$-nd homology by one.
	
	Finally, consider the case $m = a_+$.
	It is easy to see that $a$ is a facet of the $\CC{\beloweq{a_+}{L}}$ and that $\CC{\beloweq{a_+}{L'}} = \CC{\beloweq{a_+}{L}} \setminus \set{a}$.
	Now removing a facet from a simplicial complex either decreases the $(p-1)$-st Betti number, or it increases the $(p-2)$-nd one. Note that in the latter case, this cancels the effect from $m = a$.
\end{proof}

\begin{remark}
	Let $L, L'$ and $a$ be as in the previous lemma. Even if $a$ is not in the Scarf complex of $L$, there is still an exact sequence
	\[
	\ldots \rightarrow \tilde{H}_i(L,a) \rightarrow \tilde{H}_i(L',a_+) \rightarrow \tilde{H}_i(L,a_+) \rightarrow \tilde{H}_{i-1}(L, a) \rightarrow \ldots.
	\]
	In fact, this is the Mayer-Vietoris sequence coming from the decomposition
	\[ \Delta(\below{a_+}{L} \setminus \set{\hat{0}_L}) = \Delta(\below{a_+}{L'} \setminus \set{\hat{0}_{L'}}) \cup \Delta(\beloweq{a}{L} \setminus \set{\hat{0}_L}) \]
	with
	\[ \Delta(\below{a}{L} \setminus \set{\hat{0}_L}) = \Delta(\below{a_+}{L'} \setminus \set{\hat{0}_{L'}}) \cap \Delta(\beloweq{a}{L} \setminus \set{\hat{0}_L}). \]
	Here, $\Delta(.)$ denotes the order complex. Note that $\Delta(\beloweq{a}{L} \setminus \set{\hat{0}_L})$ is a cone and hence contractible.
\end{remark}

\section{Maximal lattices}
In this section, we prove our main results.
\begin{definition}
	Let $L$ be a finite atomistic lattice.
	We call $L$ \emph{maximal}, if every other lattice $L'$ with the same number of atoms that maps onto $L$ has a higher projective dimension.

	Note that this notion depends on the underlying field $\KK$.
\end{definition}

\subsection{The Scarf complex of a maximal lattice}
\begin{definition}\label{def:stoss}
	A $d$-dimensional simplicial complex on $k$ vertices is called \emph{stoss} complex if it has a complete $(d-1)$-skeleton and is $\KK$-acyclic.
\end{definition}
The name \emph{stoss} comes from \emph{Spanning Tree Of a Skeleton of a Simplex}.
The following is our first main result.
\begin{theorem}\label{thm:extremal}
	Let $L$ be a maximal lattice on $k$ atoms of projective dimension $p$.
	Then its Scarf complex $\Delta$ is a $(p-1)$-dimensional stoss complex.
\end{theorem}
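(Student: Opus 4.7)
The plan is to leverage the maximality characterization from \Cref{prop:embedding}: viewing $L$ as a meet-subsemilattice of $\bool{k}$, it is maximal iff no strictly larger such sublattice $L'\supsetneq L$ on the same atoms has $\pdim_\Q L'\le p$. I would prove each of the three stoss conditions by supposing it fails and producing such a strictly larger $L'$. The upper bound $\dim\Delta\le p-1$ is immediate: any Scarf face $\sigma$ of size $d+1$ has $L_{\le\sigma}\cong\bool{d+1}$, so by the crosscut theorem $\tilde H_{d-1}(L,\sigma)\cong\KK$, which contributes $1$ to $\beta_{d+1}(L)$; hence $d+1\le\pdim_\Q L=p$.

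For the complete $(p-2)$-skeleton, suppose some $\sigma\subseteq[k]$ with $|\sigma|\le p-1$ is missing from $L$, and choose $\sigma$ minimal in $\bool{k}\setminus L$. Then $L':=L\cup\{\sigma\}$ is still a meet-subsemilattice (the intersection of $\sigma$ with any $a\in L$ is either $\sigma$ itself or, by minimality, a proper subset of $\sigma$ lying in $L$), $\sigma$ is meet-irreducible in $L'$, and $\sigma$ lies in the Scarf complex of $L'$ since all its subsets lie in $L$. \Cref{prop:onestep} then gives $\beta(L')-\beta(L)\in\{0,\,\ee_{|\sigma|}+\ee_{|\sigma|+1}\}$; since $|\sigma|+1\le p$, either option leaves $\pdim_\Q L'=p$, contradicting maximality.

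For $\KK$-acyclicity, the previous steps reduce the possibly non-zero reduced homology of $\Delta$ to $\tilde H_{p-2}(\Delta)$ and $\tilde H_{p-1}(\Delta)$. My plan is to use the Mayer–Vietoris sequence from the Remark after \Cref{prop:onestep}, which governs the Betti behaviour when the added element is not in the Scarf complex of the enlarged lattice and, crucially, can let Betti numbers decrease. Concretely, whenever $\tilde H_i(\Delta)\ne 0$ for $i\in\{p-2,p-1\}$ I would construct a filtration of enlargements $L=L_0\subsetneq L_1\subsetneq\cdots\subsetneq L_r$ in $\bool{k}$ by increasing rank (justified by \Cref{lem:ordnung}) such that cumulative Mayer–Vietoris bookkeeping forces $\pdim_\Q L_r=p$. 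The candidate added elements are specified by the non-trivial homology classes of $\Delta$: they are subsets $\tau\subseteq[k]$ of size $\ge p$ not in $L$ whose absence witnesses the class.

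The main obstacle is exactly this acyclicity step. The rigid Scarf-irreducible case of \Cref{prop:onestep} only allows a single-element addition to leave the Betti vector unchanged or to raise it by $\ee_{|\tau|}+\ee_{|\tau|+1}$, so the typical addition pushes us out of projective dimension $p$. The pdim-preserving enlargement $L_r$ can only be built because of the more flexible Mayer–Vietoris cancellations available when the added element is not Scarf-irreducible in the intermediate lattice. Making this cancellation precise, by tracking $(p-1)$- and $(p-2)$-cycles in the crosscut complexes $\CC{(L_i)_{\le m}}$ for $m\in L_i$ with $|m|>p$, is the combinatorial heart of the proof.
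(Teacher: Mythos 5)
The first two parts of your argument are sound and match the paper's strategy: bounding $\dim\Delta\le p-1$ via the crosscut theorem, and establishing the complete $(p-2)$-skeleton by adding a minimal-rank missing element, applying \Cref{lem:ordnung} and \Cref{prop:onestep}, and noting that neither Betti outcome increases the projective dimension past $p$. These are correct.

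The genuine gap is exactly where you flag it: the acyclicity step. Your plan to build a filtration $L=L_0\subsetneq\cdots\subsetneq L_r$ of meet-subsemilattices of $\bool{k}$, add elements witnessing homology classes of $\Delta$, and track Mayer--Vietoris cancellations is never made to converge on a contradiction; you candidly describe it as \emph{a} plan rather than a proof. The difficulty you identify -- that single Scarf-irreducible additions tend to increase $\pdim$ -- is real, and there is no evident reason why a clever sequence of non-Scarf additions should end with $\pdim L_r = p$. The paper avoids this impasse entirely by a different route: (i) from the complete $(p-2)$-skeleton, the Betti numbers of $L$ are forced to be $\beta_i(L)=\binom{k}{i}$ for $i<p$ and then $\beta_p(L)=\binom{k-1}{p-1}$ by the Euler characteristic; (ii) the key inequality $f_{p-1}(\Delta)\ge\beta_p(L)$ is proved by adding \emph{all} rank-$p$ elements of $\bool{k}\setminus L$ at once to form $L'$, partitioning them by which element of $L$ covers them, and using an injectivity-of-boundary-matrix argument to show that if some covering group contributed too little to $\tilde H_{p-1}(L',b)$, then a single rank-$p$ element could be appended to $L$ without changing $\pdim_\Q$, contradicting maximality; (iii) once $f_{i-1}(\Delta)=\beta_i(L)$ for all $i$, the multigraded Betti numbers of $S/I$ are concentrated on $\Delta$, hence the minimal free resolution is cellular and supported on $\Delta$, and then acyclicity follows from \cite[Prop.\ 4.5]{millersturm}. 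This last citation does the homological work that your Mayer--Vietoris bookkeeping was supposed to do, and the intermediate face-count step is the combinatorial input you are missing. Without something of that shape, your outline does not close.
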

\begin{proof}
We proceed in several steps.
\begin{asparaenum}
\item First we show that the $(p-2)$-skeleton of $\Delta$ is complete.
	Let $a \in \bool{k} \setminus L$ be an element of minimal rank.
	\Cref{lem:ordnung} implies that $L' := L \cup \set{a}$ is a lattice, and because $L$ is maximal, we have $\pdim_\Q L' \geq p+1$ and thus $\beta_{p+1}(L') \neq \beta_{p+1}(L)$.
	Moreover, the proof of \Cref{lem:ordnung} implies that $a$ is meet-irreducible in $L'$.
	Hence by \Cref{prop:onestep} it follows that $\rk_{L'} a$ is either $p$ or $p+1$.
	So every element of rank at most $p-1$ of $L$ belongs to $\Delta$, and in particular, the $(p-2)$-skeleton of $\Delta$ is complete.
\item Next we consider the Betti numbers of $L$.
	By assumption it holds that $\beta_i(L) = 0$ for $i > p$. Further the first part and \Cref{prop:onestep} imply that $\beta_i(L) = \beta_i(\bool{k}) = \binom{k}{i}$ for $i < p$.
	As the alternating sum of the Betti numbers of $L$ equals the alternating sum of the Betti numbers of $\bool{k}$, a short computation yields that the last Betti number is $\beta_{p}(L) = \binom{k-1}{p-1}$.

\item In this step, we show that $f_{i-1}(\Delta) = \beta_i(L)$ for all $i$.
	Here $f_{i-1}(\Delta)$ denotes the number of elements of rank $i$ in $\Delta$.
	As the $(p-2)$-skeleton of $\Delta$ is complete, we have $f_{i-1}(\Delta) = \binom{k}{i} = \beta_i(L)$ for $i < p$.
	More generally, for every $a \in \Delta$, $\CC{\beloweq{a}{L}}$ is the boundary of a $(\rk_L a - 1)$-simplex, and hence $f_{i-1}(\Delta) \leq \beta_{i}(L)$ for all $i$.
	In particular $f_{i-1}(\Delta) = 0$ for $i > \pdim_\Q L = p$.

	It remains to show that $f_{p-1}(\Delta) = \beta_p(L)$.
	It is clear that $f_{p-1}(\Delta) \leq \beta_p(L)$ and that $f_{p-1}(\Delta)$ equals the number of elements of rank $p$ in $L$.
	For the other inequality we consider the set $E$ of rank $p$ elements of $\bool{k} \setminus L$ and let $L' := L \cup E$.
	Every element of $E$ lies in the Scarf complex of $L'$, hence \Cref{prop:onestep} implies that $\beta_i(L') = \beta_i(L)$ for $i \neq p,p+1$.
	We apply the argument of (2) above to $L'$ to conclude that $\beta_{p+1}(L') = \binom{k-1}{p} = \binom{k}{p} - \beta_p(L)$.
	Hence it is sufficient to show that $\card{E} \leq \beta_{p+1}(L')$.

	First, note that every element of $E$ is meet-irreducible in $L'$ and thus covered by exactly one element of $L$.
	Partition the elements of $E$ into classes depending on which element of $L$ covers them.
	Fix an element $b \in L$ and let $a_1, \dots, a_r \in E$ be the elements that are covered by $b$.
	We are going to show that $\dim \tilde{H}_{p-1}(L',b) = r$, because then summing up over $b$ yields the result.
	Note that the elements $a_1, \dotsc, a_r$ are facets of $\CC{\beloweq{b}{L'}}$ and we have
	\[ \CC{\beloweq{b}{L}} = \CC{\beloweq{b}{L'}} \setminus \set{a_1, \dotsc, a_r}. \]
	Let $A$ and $A'$ be the matrices of the top boundary maps of $\CC{\beloweq{b}{L}}$ and $\CC{\beloweq{b}{L'}}$.
	Note that $A$ is obtained from $A'$ by deleting the columns corresponding to $a_1, \dotsc, a_r$ and that by assumption $A$ is injective.
	Hence, if $\dim \ker A' = \dim \tilde{H}_{p-1}(L',b) < r$, then $\dim \im A' > \dim \im A$.
	So in this case the column of some element, say $a_1$, is not in the image of $A$.
	But then the matrix $A_1$ obtained by appending $a_1$ to $A$ is still injective.
	Consequently, $\pdim_\Q(L \cup \set{a_1}) = \pdim_\Q L$, contradicting the maximality of $L$.
	
\item Finally, we show that $\Delta$ is acyclic.
	For this consider a monomial ideal $I \subset S$ in some polynomial ring $S$, whose lcm lattice $L_I$ is isomorphic to $L$.
	By the preceding considerations, the Betti numbers of $L$ and the face numbers of $\Delta$ coincide.
	As every $(i-1)$-face of $\Delta$ contributes to $\beta_i(L)$, this
	implies that $\beta_{i,a}^S(S/I) = \tilde{H}_{i-2}(L, a) = 0$ for $a \in L \setminus \Delta$ and all $i$.
	In other words, the multigraded Betti numbers of $S/I$ are concentrated on the Scarf complex.
	Hence $S/I$ has a cellular minimal free resolution which is supported on the Scarf complex.
	This in turn implies that $\Delta$ is $\KK$-acyclic, cf. \cite[Prop. 4.5]{millersturm}.
\end{asparaenum}
\end{proof}

We collect some useful by-products of the preceding proof.
\begin{corollary}\label{cor:extremal}
Let $L$ be a maximal lattice with projective dimension $p$ and with $k$ atoms. Let further $I \subset S$ be an ideal such that $L_I \cong L$.
\begin{enumerate}
	\item The Betti numbers of $L$ resp. of $S/I$ are
		\[ \beta_{i}(L) = 
		\begin{cases}
			\binom{k}{i} &\text{ for } 0 \leq i < p, \\
			\binom{k-1}{i-1} &\text{ for }  i = p, \\
			0 &\text{ for } i > p. \\
		\end{cases}\]
	\item Let $\Delta$ be the Scarf complex of $L$. Then $\tilde{H}_i(L, a) = 0$ for $a \in L \setminus \Delta$ and all $i$.
	Equivalently, the minimal free resolution of $S/I$ is supported on the Scarf complex $\Delta$.
\end{enumerate}
\end{corollary}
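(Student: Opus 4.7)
The plan is simply to isolate and repackage the by-products of the proof of \Cref{thm:extremal}, which already establishes both claims implicitly. No new machinery is needed.

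For part (1), I would argue as follows. By \Cref{thm:extremal}, the Scarf complex $\Delta$ of $L$ is a $(p-1)$-dimensional stoss complex, so in particular its $(p-2)$-skeleton is complete. Now order the elements of $\bool{k} \setminus L$ by decreasing rank; by \Cref{lem:ordnung}, each is meet-irreducible at the moment of removal when we build $L$ from $\bool{k}$ step by step, and each of them has rank $\geq p$ (as shown in step (1) of the proof of \Cref{thm:extremal}). Thus \Cref{prop:onestep} applies at every step, and it only affects the Betti numbers in degrees $p$ and $p+1$. Consequently $\beta_i(L) = \beta_i(\bool{k}) = \binom{k}{i}$ for $i < p$. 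The hypothesis $\pdim_\Q L = p$ gives $\beta_i(L) = 0$ for $i > p$. Finally, because each step of \Cref{prop:onestep} preserves the alternating sum of the Betti numbers, we have $\sum_i (-1)^i \beta_i(L) = \sum_i (-1)^i \beta_i(\bool{k}) = 0$, hence
\[ \beta_p(L) = -\sum_{i=0}^{p-1} (-1)^{i-p}\binom{k}{i} = \binom{k-1}{p-1} \]
by the standard binomial identity $\sum_{i=0}^{p-1}(-1)^{p-1-i}\binom{k}{i} = \binom{k-1}{p-1}$.

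For part (2), I would recall from step (3) of the proof of \Cref{thm:extremal} that $f_{i-1}(\Delta) = \beta_i(L)$ for all $i$. The point is that for each $a \in \Delta$ of rank $i$, the crosscut complex $\CC{\beloweq{a}{L}}$ is the boundary of an $(i-1)$-simplex, so contributes exactly one to $\dim_\KK \tilde{H}_{i-2}(L,a)$ and hence to $\beta_i(L)$. Since the total contribution from $\Delta$ already saturates $\beta_i(L)$, no element $a \in L \setminus \Delta$ can contribute, i.e.\ $\tilde{H}_{i-2}(L,a) = 0$ for all $i$ and all $a \in L \setminus \Delta$. Translating this via \Cref{thm:gpw}, $\beta^S_{i,a}(S/I) = 0$ for every multidegree $a \notin \Delta$, which is the statement that the multigraded minimal free resolution of $S/I$ has all its generators in degrees on $\Delta$; equivalently, it is supported on $\Delta$.

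There is no serious obstacle here; the only mildly delicate point is the binomial identity used to compute $\beta_p(L)$ from the alternating sum, and even that is entirely standard. Everything else is a rephrasing of facts already proven in the course of \Cref{thm:extremal}.
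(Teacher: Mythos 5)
Your proposal is correct and follows exactly the route the paper intends: the paper explicitly labels \Cref{cor:extremal} as a collection of ``by-products of the preceding proof,'' and your argument simply repackages steps (1), (2) and (4) of the proof of \Cref{thm:extremal} (complete $(p-2)$-skeleton, alternating-sum computation of $\beta_p$, and saturation of $\beta_i(L)$ by the faces of $\Delta$). The only small imprecision is the phrase that \Cref{prop:onestep} ``only affects the Betti numbers in degrees $p$ and $p+1$''; more accurately, removing an element of rank $r\geq p$ only affects $\beta_r$ and $\beta_{r+1}$, which still leaves $\beta_i$ untouched for $i<p$, so the conclusion stands.
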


\subsection{Reconstructing the lattice}
In this section we show how to reconstruct a maximal lattice from its Scarf complex.
To each stoss complex $\Delta$ on the vertex set $[k]$, we associate the following poset
	\[ \exlat \defa \set{V \subset [k] \with \Delta|_V \text{ is acyclic}} \subseteq \bool{k}.\]
Here, $\Delta|_V$ is the restriction of $\Delta$ to $V$ (Recall that $\bool{k}$ is the set of subsets of $[k]$).
The main result is the following.
\begin{theorem}\label{thm:reconstruction}
	For every stoss complex $\Delta$, $\exlat$ is the unique maximal lattice $L$ whose Scarf complex equals $\Delta$.
\end{theorem}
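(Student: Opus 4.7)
The plan is to verify that $\exlat$ is itself a maximal lattice whose Scarf complex equals $\Delta$, and then to deduce uniqueness by showing that every maximal lattice $L$ with Scarf complex $\Delta$ must be contained in $\exlat$.

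For the Scarf complex of $\exlat$, the inclusion of $\Delta$ is immediate: for $V \in \Delta$ every $W \subseteq V$ is a face of $\Delta$, so $\Delta|_W$ is a simplex and $W \in \exlat$. For the reverse, suppose all subsets of $V$ lie in $\exlat$. If $|V| \leq p-1$ then $V \in \Delta$ by the complete $(p-2)$-skeleton hypothesis. If $|V| = p$, then $V \in \exlat$ forces $\Delta|_V$ to be acyclic, so $V \in \Delta$ (otherwise $\Delta|_V$ would be the boundary of a $(p-1)$-simplex and hence a non-trivial sphere). Finally, if $|V| \geq p+1$, every $p$-subset of $V$ is in $\exlat$ and hence in $\Delta$ by the previous case; then $\Delta|_V$ contains the full $(p-1)$-skeleton of the simplex on $V$, yielding a non-trivial $(p-1)$-cycle which survives in $\Delta$ (as $\Delta$ has no $p$-faces) and contradicts the $\KK$-acyclicity of $\Delta$. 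Hence the Scarf complex of $\exlat$ equals $\Delta$.

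Next I would establish $\pdim_\Q \exlat = p$ with the Betti numbers from \Cref{cor:extremal}. By \Cref{thm:gpw} it suffices to compute $\tilde{H}_j(\exlat, V)$ for every $V \in \exlat$. When $V \in \Delta$, all proper subsets of $V$ lie in $\exlat$, so $\CC{\exlat_{\leq V}}$ is the boundary of the simplex on $V$, contributing $1$ to $\beta_{|V|}$. When $V \in \exlat \setminus \Delta$ (which forces $|V| > p$), I claim $\CC{\exlat_{\leq V}}$ is $\KK$-acyclic: a nerve-lemma argument applied to the cover $\{2^W : W \in \exlat_{<V}\}$---whose pairwise intersections $2^{W_1 \wedge W_2}$ are either empty or contractible simplices---together with an acyclic-carrier reduction to the (acyclic) complex $\Delta|_V$ does this. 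Summing over $V$ yields $\beta_i(\exlat) = f_{i-1}(\Delta)$, and the stoss $f$-vector produces exactly the Betti numbers of \Cref{cor:extremal}. For maximality, take $L \supsetneq \exlat$ with the same atoms; pick $V \in L \setminus \exlat$ of minimal size and a minimal $W \subseteq V$ in $\bool{k} \setminus \exlat$. The sphere-in-$\Delta$ argument of the previous paragraph (applied to $W$) forces $|W| = p$, and closure of $L$ under meet combined with the minimality of $|V|$ provides $Y \in \exlat$ with $V \cap Y = W$, so that $W$ (call it $V^*$) lies in $L$. All proper subsets of $V^*$ have size $< p$ and hence lie in $\exlat \subseteq L$, so $V^*$ is a new $(p-1)$-dimensional facet of the Scarf complex of $L$, forcing $\beta_p(L) > \binom{k-1}{p-1} = \beta_p(\exlat)$. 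An iterated application of \Cref{prop:onestep} to the size-$p$ elements of $L \setminus \exlat$ then transfers this strict increase into $\beta_{p+1}(L) > 0$, so $\pdim_\Q L > p$.

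For uniqueness, let $L$ be another maximal lattice with Scarf complex $\Delta$; by \Cref{cor:extremal}, $\pdim_\Q L = p$ and $\tilde{H}_j(L, V) = 0$ for every $V \in L \setminus \Delta$. The crosscut theorem translates this vanishing to $\CC{L_{\leq V}}$ being $\KK$-acyclic, and a nerve-type argument parallel to the one above identifies its homology with that of $\Delta|_V$; hence $\Delta|_V$ is acyclic and $V \in \exlat$. Combined with $\Delta \subseteq \exlat$ this gives $L \subseteq \exlat$, and applying the maximality of $L$ to the overlattice $\exlat$ rules out $L \subsetneq \exlat$ (else $\pdim_\Q \exlat > \pdim_\Q L = p$, contradicting $\pdim_\Q \exlat = p$); therefore $L = \exlat$. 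The main technical obstacle is the homology equivalence between $\CC{L_{\leq V}}$ and $\Delta|_V$ for $V \in L \setminus \Delta$: although the inclusion $\Delta|_V \hookrightarrow \CC{L_{\leq V}}$ is immediate, upgrading it to a homology equivalence in the presence of the additional simplices contributed by $L \setminus \Delta$ is the most delicate step of the proof.
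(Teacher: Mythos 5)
Your overall architecture (compute the Scarf complex, compute the projective dimension, prove maximality, then uniqueness) is sound, and your direct verification that the Scarf complex of $\exlat$ equals $\Delta$ is a correct and nice addition that the paper leaves implicit. However, there are two substantive problems.

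First, the \emph{acyclicity} of $\CC{\exlat_{\leq V}}$ for $V \in \exlat\setminus\Delta$, and the related claim that for an arbitrary maximal lattice $L$ one has $\tilde H_*(\CC{L_{\leq V}}) \cong \tilde H_*(\Delta|_V)$, is precisely what the paper isolates and proves as \Cref{lemma:acyclic} (by an inductive Mayer--Vietoris argument over the elements of $L_{<V}\setminus\Delta$). Your ``nerve lemma plus acyclic-carrier'' sketch is not a proof: the nerve of the cover $\{2^W : W\in \exlat_{<V}\}$ is the order complex of $\exlat_{<V}$, whose homology is not obviously that of $\Delta|_V$, and you do not explain how the acyclic-carrier reduction is carried out. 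You flag this yourself as ``the most delicate step,'' which is accurate---but it is exactly the content of \Cref{lemma:acyclic}, and without it the proof is incomplete.

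Second, and more seriously, your direct maximality argument has a genuine gap. You pick $V\in L\setminus\exlat$ of minimal cardinality and a $\subseteq$-minimal $W\subseteq V$ with $W\notin\exlat$, correctly conclude $|W|=p$, and then assert that ``closure of $L$ under meet combined with the minimality of $|V|$ provides $Y\in\exlat$ with $V\cap Y=W$.'' No such $Y$ is constructed and none is forced to exist: you would need some $Y\in\exlat$ containing $W$ and avoiding $V\setminus W$, i.e.\ an acyclic restriction $\Delta|_Y$ with $W\subseteq Y\subseteq ([k]\setminus V)\cup W$, and there is no reason this exists in general. Worse, if $W\subsetneq V$, then $|W|<|V|$, so by the very minimality of $|V|$ one has $W\notin L$; your argument can only succeed in the case $V=W$, i.e.\ $|V|=p$, and you do not show that this case is the only one. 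The paper avoids this entirely: it invokes the abstract fact that some maximal lattice $L'\supseteq \exlat$ with $\pdim_\Q L' = \pdim_\Q \exlat$ exists, then uses \Cref{thm:extremal} (face-number count) to identify the Scarf complex of $L'$ with $\Delta$, then \Cref{cor:extremal} and \Cref{lemma:acyclic} to force $L'\subseteq \exlat$. This sidesteps the need for an explicit inductive argument on overlattices of $\exlat$. Finally, your phrase ``an iterated application of \Cref{prop:onestep}\ldots transfers this strict increase into $\beta_{p+1}(L)>0$'' is not justified; what the Euler-characteristic identity $\sum_i(-1)^i\beta_i(L)=0$ actually gives, once $\beta_i(L)=\binom{k}{i}$ for $i<p$, is $\beta_p(L)-\beta_{p+1}(L)+\cdots=\binom{k-1}{p-1}$, hence $\beta_p(L)>\binom{k-1}{p-1}$ implies only that \emph{some} $\beta_j(L)>0$ for $j>p$, not specifically $j=p+1$---which would suffice, but is a different and weaker conclusion than you state, and in any case rests on the unproved claim that $\beta_p(L)>\binom{k-1}{p-1}$.
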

\noindent We prepare two lemmata before we present the proof of the theorem.
\begin{lemma}\label{lemma:schnittazyklisch}
	Let $\Delta$ be a stoss complex and let $V,W$ be subsets of its set of vertices.
	If $\Delta|_V$ and $\Delta|_W$ are acyclic, then so is $\Delta|_{V \cap W}$.
\end{lemma}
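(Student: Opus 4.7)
My approach rests on two observations: first, acyclicity of $\Delta|_V$ can be read off from the top boundary map of $\Delta$ restricted to $V$, and second, this boundary map is globally injective, so it commutes with intersections of subspaces. Since $\Delta$ has complete $(d-1)$-skeleton, for every $V \subseteq [k]$ the chain complex of $\Delta|_V$ coincides with that of the full simplex $\Sigma_V$ on $V$ in all degrees $\leq d-1$, while in top degree $d$ we have $C_d(\Delta|_V) = F_d(V)$, the span of those $d$-faces of $\Delta$ whose vertex set is contained in $V$. Hence the only possibly nontrivial reduced homology groups of $\Delta|_V$ occur in degrees $d-1$ and $d$, and $\Delta|_V$ is acyclic if and only if the restricted boundary $\partial \colon F_d(V) \to Z_{d-1}(\Sigma_V)$ is an isomorphism.

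Since $\Delta$ itself is $\KK$-acyclic and $d$-dimensional, the global boundary map $\partial \colon C_d(\Delta;\KK) \to C_{d-1}(\Sigma_{[k]};\KK)$ is injective, and hence its restriction to $F_d(V)$ is automatically injective for every $V$. So acyclicity of $\Delta|_V$ reduces to the single surjectivity condition $\partial(F_d(V)) = Z_{d-1}(\Sigma_V)$. The conclusion will then follow from two elementary distributivity identities, combined with the injectivity of $\partial$. On the one hand, $F_d(V \cap W) = F_d(V) \cap F_d(W)$, since a $d$-face has vertex set in $V \cap W$ iff it lies in both $V$ and $W$. On the other hand, $Z_{d-1}(\Sigma_{V \cap W}) = Z_{d-1}(\Sigma_V) \cap Z_{d-1}(\Sigma_W)$, since a $(d-1)$-cycle supported on simplices of $V$ and simultaneously on simplices of $W$ is supported on simplices of $V \cap W$.

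Now, if $\Delta|_V$ and $\Delta|_W$ are both acyclic, then injectivity of $\partial$ gives $\partial(A \cap B) = \partial(A) \cap \partial(B)$ for arbitrary subspaces $A, B \subseteq C_d(\Delta;\KK)$, so
\[ \partial(F_d(V \cap W)) = \partial(F_d(V)) \cap \partial(F_d(W)) = Z_{d-1}(\Sigma_V) \cap Z_{d-1}(\Sigma_W) = Z_{d-1}(\Sigma_{V \cap W}), \]
and thus $\Delta|_{V \cap W}$ is acyclic. The step requiring real care is the image-intersection interchange $\partial(A \cap B) = \partial(A) \cap \partial(B)$, which genuinely uses the injectivity of $\partial$ (without it one only has $\subseteq$); this is exactly where the $\KK$-acyclicity of the ambient complex $\Delta$ enters in an essential way.
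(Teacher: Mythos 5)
Your proof is correct and rests on exactly the same mechanism as the paper's: the top boundary map of $\Delta$ is globally injective, so one only needs $\tilde H_{d-1}(\Delta|_{V\cap W})=0$, and this follows from $C_d(\Delta|_{V\cap W}) = C_d(\Delta|_V)\cap C_d(\Delta|_W)$ together with unique liftability through the injective boundary. The paper phrases this as an element chase (lift a $(d-1)$-cycle to its unique preimage in $C_d(\Delta)$ and observe it must lie in both $C_d(\Delta|_V)$ and $C_d(\Delta|_W)$), whereas you package the same fact as the identity $\partial(A\cap B)=\partial A\cap\partial B$ for an injective linear map; these are equivalent formulations of one argument.
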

\begin{proof}
	Let $d = \dim \Delta$ and let $U$ be the vertex set of $\Delta$.
	Let $C_i(\Delta)$ denote the $i$-chains of $\Delta$, i.e., the vector space spanned by the $i$-faces of $\Delta$, and let $\partial_{i}^\Delta: C_i(\Delta) \rightarrow C_{i-1}(\Delta)$ denote the $i$-th boundary map.
	For subsets $V_1 \subset V_2 \subseteq U$, there are natural inclusions $C_i(\Delta|_{V_1}) \subset C_i(\Delta|_{V_2})$ and boundary maps of the smaller complex are just the restrictions of the boundary maps of the larger complex.
	Under these inclusions it clearly holds that $C_i(\Delta|_{V \cap W}) = C_i(\Delta|_{V}) \cap C_i(\Delta|_{W})$.
	
	For our claim we only need to show that $\tilde{H}_{d-1}(\Delta|_{V \cap W}) = 0$.
	Consider a cycle $a \in \ker \partial_{d-1}^{\Delta|_{V\cap W}}$. As $\Delta$ is acyclic there exists a preimage $b \in C_d(\Delta)$ of $a$. Moreover, this preimage is unique because the $d$-th boundary map is injective.
	
	Now $\Delta|_V$ and $\Delta|_W$ are acyclic, hence $b$ is contained in both $C_d(\Delta|_V)$ and $C_d(\Delta|_W)$.
	It follows that $b \in  C_i(\Delta|_{V}) \cap C_i(\Delta|_{W}) = C_i(\Delta|_{V \cap W})$, so $a$ is a boundary in $\Delta|_{V\cap W}$.
\end{proof}

\begin{lemma}\label{lemma:acyclic}
	Let $L$ be an atomistic lattice and let $\Gamma \subset L$ be a subcomplex of its Scarf complex.
	Then the following are equivalent:
	\begin{enumerate}
		\item $\CC{\beloweq{a}{L}}$ is acyclic for all $a \in L \setminus \Gamma$.
		\item $\Gamma|_a$ is acyclic for all $a \in L$.
	\end{enumerate}
\end{lemma}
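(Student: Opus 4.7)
The plan is to prove the equivalence by induction on $\rk_L a$ via a Mayer--Vietoris argument, built on the decomposition
\[
\CC{\beloweq{a}{L}} \;=\; \bigcup_{c \in L,\,c<a} 2^c \;=\; \Gamma|_a \,\cup \bigcup_{c \in L\setminus\Gamma,\,c<a} 2^c,
\]
valid for every $a \in L\setminus\Gamma$. The first equality is immediate, since $E \in \CC{\beloweq{a}{L}}$ iff $E \subseteq c := \bigvee_L E$ with $c < a$. The second uses that $\Gamma$ is a downward-closed simplicial complex, so the simplices $2^c$ for $c \in \Gamma$ with $c \leq a$ exactly fill out $\Gamma|_a$ (the case $c = a$ is ruled out by $a \notin \Gamma$). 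The degenerate case $a \in \Gamma$ is immediate, because then $\Gamma|_a = 2^a$ is a simplex and (1) imposes no condition at $a$.

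For the inductive step at $a \in L\setminus\Gamma$, I would order the elements of $(L\setminus\Gamma)\cap \below{a}{L}$ by increasing rank as $c_1,\dotsc,c_N$, set $X_0 = \Gamma|_a$ and $X_j = X_{j-1}\cup 2^{c_j}$, and attach one simplex at a time until $X_N = \CC{\beloweq{a}{L}}$. The crucial identity to verify is
\[
X_{j-1} \cap 2^{c_j} \;=\; \Gamma|_{c_j}\,\cup \bigcup_{c \in L\setminus\Gamma,\,c<c_j} 2^c \;=\; \CC{\beloweq{c_j}{L}},
\]
obtained from $\Gamma|_a \cap 2^{c_j} = \Gamma|_{c_j}$ together with $2^{c_i} \cap 2^{c_j} = 2^{c_i \wedge c_j}$, using that $c_i \wedge c_j \in L$ has rank strictly less than $\rk_L c_j$ (the increasing-rank ordering forces $c_j \not\leq c_i$). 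Each such meet either lies in $\Gamma$, in which case $2^{c_i\wedge c_j}$ is absorbed into $\Gamma|_{c_j}$, or it lies in $L\setminus\Gamma$ and coincides with one of the earlier $c_{i'}$, yielding exactly the two unions on the right.

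Granting this identity, Mayer--Vietoris does the rest: $2^{c_j}$ is contractible, and by the inductive hypothesis at $c_j$ the intersection $\CC{\beloweq{c_j}{L}}$ is acyclic iff $\Gamma|_{c_j}$ is. Under hypothesis (2) every $\Gamma|_{c_j}$ is acyclic, so each attachment preserves reduced homology, giving $\tilde{H}_*(\CC{\beloweq{a}{L}};\KK) \cong \tilde{H}_*(\Gamma|_a;\KK)$ and hence $(2)\Rightarrow(1)$ at $a$; the converse follows by reading the same Mayer--Vietoris sequences in the other direction. The main obstacle is verifying the intersection identity, and specifically matching the meets $c_i \wedge c_j$ with either faces of $\Gamma|_{c_j}$ or with previously attached $c_{i'}$; once this bookkeeping is handled, the iterated Mayer--Vietoris collapses cleanly and the induction closes.
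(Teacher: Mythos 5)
Your proof is correct and follows essentially the same route as the paper's: both order the elements of $L_{<a}\setminus\Gamma$ by increasing rank, attach the corresponding full simplices one at a time starting from $\Gamma|_a$, identify each intersection with $\CC{\beloweq{c_j}{L}}$, and finish by Mayer--Vietoris together with an induction (on rank in your write-up, on $\#(L_{<a}\setminus\Gamma)$ in the paper's, which amounts to the same thing). You are somewhat more explicit than the paper in verifying the intersection identity by tracking the meets $c_i\wedge c_j$, which is a welcome clarification rather than a divergence.
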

Here we set $\Gamma|_a := \set{b \in \Gamma\with b \leq a}$.
As a simplicial complex, this is the restriction of $\Gamma$ to those vertices (atoms) which lie below $a$.
\begin{proof}
	We first note that $\Gamma|_a$ is trivially acyclic for $a \in \Gamma$.
	
	Now assume that $a \in L \setminus \Gamma$.
	Consider the elements $\tau_1, \tau_2, \dotsc, \tau_r$ of $\below{a}{L} \setminus \Gamma$, ordered by increasing rank, i.e. $\rk_L \tau_j \leq \rk_L \tau_{j+1}$.
	Let $L_0 := \Gamma|_a$ and $L_j := \Gamma|_a \cup\set{\tau_1, \dots, \tau_j}$ for $1 \leq j \leq r$.
	Note that $L_r = \below{a}{L}$.
	
	This increasing sequence of posets gives rise to an increasing sequence of subcomplexes of $\CC{\beloweq{a}{L}}$.
	We have that
	\begin{align*}
		\CC{L_j} &= \CC{L_{j-1}} \cup \hat{\tau}_j  \quad \text{ and }\\
		\CC{\beloweq{\tau_j}{L}} &= \CC{L_{j-1}} \cap \hat{\tau}_j.
	\end{align*}
	Here $\hat{\tau}_j$ denotes the full simplex generated by $\tau_j$, which is contractible.
	If the first condition holds and $\CC{\beloweq{\tau_j}{L}}$ is acyclic for all $j$, then a Mayer-Vietoris argument implies that $\CC{\Gamma|_a} = \Gamma|_a$ is acyclic as well.
	
	On the other hand, assume that $\Gamma|_a$ is acyclic for all $a \in L$.
	Fix an $a \in L$.
	We proceed by induction on the number of elements in $\below{a}{L} \setminus \Gamma$. 
	If this set is empty, then $\below{a}{L} = \Gamma|_a$, so the claim holds.
	Otherwise, for each $\tau_i$ this number is smaller than $r$, hence by induction $\CC{\beloweq{\tau_j}{L}}$ is acyclic for all $j$ and by the same Mayer-Vietoris argument we conclude that $\CC{\beloweq{a}{L}}$ is acyclic.
\end{proof}

\begin{proof}[Proof of \Cref{thm:reconstruction}]
	Let $L := \exlat$.
	Then $L$ is a meet-sublattice of $\bool{k}$ by \Cref{lemma:schnittazyklisch}. Moreover, $L$ has a maximal element $[k]$, so it is a lattice.

	First, we show that $\pdim_\Q L = \dim \Delta + 1$.
	Note that $\Delta \subset L$ is a subcomplex of the Scarf complex of $L$, simply because $\Delta$ is itself a simplicial complex.
	So \Cref{lemma:acyclic} implies that $\CC{\beloweq{a}{L}}$ is acyclic for all $a \in L \setminus \Delta$.
	So for computing the projective dimension of $L$, we only need to consider elements in $\Delta$.
	But for every $a \in \Delta$ it holds that $\CC{\beloweq{a}{L}}$ is a $(\rk_L a - 2)$-sphere, hence $\pdim_\Q L = \dim \Delta + 1$. 

	By the definition of maximal lattices, there exists a maximal lattice $L' \subseteq \bool{k}$ such that $L \subseteq L'$ and $\pdim_\Q L = \pdim_\Q L'$.
	It is clear that $\Delta$ is contained in the Scarf complex of $L'$, and by \Cref{thm:extremal} their face numbers coincide.
	So $\Delta$ is the Scarf complex of $L'$.
	Now by \Cref{cor:extremal}, $\CC{\beloweq{a}{L'}}$ is acyclic for all $a \in L' \setminus \Delta$.
	So \Cref{lemma:acyclic} and the definition of $L$ imply that $L' \subseteq L$ and thus $L=L'$.
\end{proof}

\subsection{Supplements}
In this section, we give two examples to illustrate how to pass from a stoss complex $\Delta$ to $\exlat$ and to an ideal with this lcm lattice.
Further, we collect some facts about stoss complexes.
\begin{example}
	Consider the path $\Delta_k$ on $k$ vertices, i.e. the graph with vertex set $[k]$ and edges $\set{i,i+1}$ for $i = 1,\dotsc,k-1$.
	This graph is acyclic (i.e. a tree) and thus a stoss complex. 
	Let us compute $L := \exlat[\Delta_k]$.
	It is easy to see that an induced subgraph $\Delta_k|_V$ is acyclic if and only if $V \subset [k]$ is a set of consecutive integers.
	Hence
	\[ \exlat[\Delta_k] = \set{ \set{a,a+1,\dotsc, b} \subset [k] \with 1 \leq a \leq b \leq k} \subset \bool{k}. \]
	We recall from \cite[Thm 3.4]{lcm} how to find an ideal with this lcm lattice.
	For this, we need to assign a monomial $w(m)$ to each $m \in L$, such that $\gcd(w(m), w(m')) = 1$ if $m$ and $m'$ are incomparable.
	Moreover, we require that $w(m) \neq 1$ if $m$ is meet-irreducible. The last condition is that $w(\hat{1}_L) = 1$.
	Then, for each atom $a$ of $L$, the product of all $w(m)$ for $m \ngeq a$ gives a generator of $I$.
	
	Typically, one can choose $w(m) = 1$ for each $m$ that is not meet-irreducible, and $w(m)$ to be just a variable otherwise.
	In our case, the meet-irreducible elements of $L$ are the sets containing $1$ or $k$.
	To see this, note that $\set{a,\dotsc, b}$ is in general covered by $\set{a-1,\dotsc, b}$ and by $\set{a,\dotsc,b+1}$.
	So if either $a = 1$ or $b = k$, then the element is covered by only one other element and thus meet-irreducible.
	We set
	\[w(m) := \begin{cases}
	X &\text{ if } 1 \in m, \\
	Y &\text{ if } k \in m, \\
	1 &\text{ otherwise.}
	\end{cases}\]
	Then for $i \in [k]$ we have $\prod_{m \ngeq \set{i}} w(m) = X^{i-1}Y^{k-i-1}$.
	Hence the corresponding ideal is $I = (X^{k-1}, X^{k-2}Y, \dotsc, Y^{k-1}) = (X,Y)^{k-1} \subset \KK[X,Y]$.
\end{example}

\begin{example}
	As a second example, consider the graph $G$ with vertex set $[k]$ and edges $\set{1,i}$ for $i = 2, \dotsc, k$ for $k \geq 3$. This is again a stoss complex, so its Alexander dual $\Delta := G^\vee$ is a stoss complex as well, cf. \Cref{prop:stree} below.
	To simplify notation, we write $M^c := [k] \setminus M$ for the complement of a set.
	$\Delta$ contains all subsets of $[k]$ of cardinality at most $k-3$.
	Moreover, $\Delta$ contains $\set{a,b}^c$ if and only if $1 \notin \set{a,b}$.
	The lattice $L := \exlat$ contains in addition the sets $\set{a}^c$, such that $\Delta|_{\set{a}^c}$ is acyclic.
	By Alexander duality, these are exactly those $\set{a}^c$, where $\lk_G a$ is acyclic. Hence, $L$ contains $\set{a}^c$ for $a \neq 1$.
	
	Next we identify the meet-irreducible elements of $L$.
	Clearly, the elements of rank $k-1$ are meet-irreducible and we set $w(\set{a}^c) := X_{aa}$.
	An element $\set{a,b}^c$ of rank $k-2$ is covered by $\set{a}^c$ and $\set{b}^c$. But $L$ contains only those elements with $a,b \neq 1$, so there are no meet-irreducible elements of this rank.
	An element $\set{a,b,c}^c$ of rank $k-3$ is covered by $\set{a,b}^c, \set{b,c}^c$ and $\set{a,c}^c$.
	Thus it is meet-irreducible if and only if it contains $1$.
	We set $w(\set{a,b,1}^c) := X_{ab}$, where we assume $X_{ab} = X_{ba}$.
	There are no meet-irreducible elements of lower rank, because the $(k-3)$-skeleton of $\Delta$ is complete.
	
	We conclude that the corresponding ideal is generated by 
	\begin{align*}
	\prod_{m \ngeq \set{1}} w(m) &= \prod_{2\leq a < b \leq k} X_{ab}\qquad \text{ and}\\
	\prod_{m \ngeq \set{a}} w(m) &= \prod_{b=2}^k X_{ab} \qquad\text{ for } a = 2,\dotsc, k,
	\end{align*}
	inside the polynomial ring $\KK[X_{ab} \with 1\leq a, b\leq k]$ with $X_{ab} = X_{ba}$.
\end{example}

Let us collect some general facts about stoss complexes.
These are partially known, but we consider it convenient to collect them in one place.
\begin{proposition}\label{prop:stree}
	\begin{enumerate}
		\item Every stoss complex $\Delta$ is $\KK$-Cohen-Macaulay and its Stanley-Reisner ring has a linear free resolution.
		\item A simplicial complex is a stoss complex if and only if its Alexander dual is a stoss complex. 
		\item Let $\Delta$ be a $(p-1)$-dimensional stoss complex with $k$ vertices.
			The graded Betti numbers of $\KK[\Delta]$ are given by
		\[ \beta_{i, i + (p-1)}(\KK[\Delta]) = \frac{1}{i+p-1}\cdot\frac{(k-1)!}{(i-1)!(p-1)!(k-i-p)!} \]
		and zero otherwise.
		\item Every $(p-1)$-dimensional stoss complex has exactly $\binom{k-1}{p-1}$ $(p-1)$-faces.
	\end{enumerate}
\end{proposition}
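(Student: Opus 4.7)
The plan is to bootstrap everything from part (2) and Hochster's formula, using Eagon-Reiner to convert between Cohen-Macaulayness and linearity of the Stanley-Reisner resolution. The main obstacle will be showing that $I_\Delta$ admits a linear resolution; everything else is then Alexander-duality formalism or combinatorial bookkeeping.

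I would start with part (2). Writing $q := k-p$, consider the Alexander dual $\Delta^\vee := \{F \subseteq [k] : [k]\setminus F \notin \Delta\}$. Since $\dim \Delta = p-1$, no subset of $[k]$ of size $\geq p+1$ lies in $\Delta$, so every subset of size $\leq q-1$ lies in $\Delta^\vee$, which gives $\Delta^\vee$ a complete $(q-2)$-skeleton. Because $\Delta$ is $\KK$-acyclic it cannot contain the full $(p-1)$-skeleton of the simplex on $[k]$ (whose top homology is non-zero), so some $p$-subset of $[k]$ is missing from $\Delta$; its complement of size $q$ is a face of $\Delta^\vee$ and forces $\dim \Delta^\vee = q-1$. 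Combinatorial Alexander duality over a field then gives $\tilde H_i(\Delta^\vee;\KK) \cong \tilde H_{k-3-i}(\Delta;\KK) = 0$, so $\Delta^\vee$ is $\KK$-acyclic and hence a stoss complex; the converse follows by double dualizing.

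For part (1), the core step is to show that $I_\Delta$ is $p$-linear, which by Hochster's formula amounts to the vanishing of $\tilde H_\ell(\Delta|_W;\KK)$ for every $W \subseteq [k]$ and every $\ell \neq p-2$. Vanishing below $p-2$ is immediate since $\Delta|_W$ inherits a complete $(p-2)$-skeleton from $\Delta$, and vanishing above $p-1$ holds for dimensional reasons. The critical case $\ell = p-1$ is the pivot: any $(p-1)$-cycle of $\Delta|_W \subseteq \Delta$ is a $(p-1)$-cycle of $\Delta$, and since $\dim \Delta = p-1$ there are no $p$-chains in $\Delta$; hence $\tilde H_{p-1}(\Delta;\KK)=0$ forces every such cycle to vanish outright. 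Eagon-Reiner then gives that $\Delta^\vee$ is Cohen-Macaulay for every stoss $\Delta$; applying this with $\Delta$ replaced by $\Delta^\vee$ (itself a stoss complex by (2)) yields the Cohen-Macaulayness of $\Delta = \Delta^{\vee\vee}$, completing (1).

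Parts (4) and (3) are then calculations. For (4), I would read off the number of $(p-1)$-faces from $\tilde\chi(\Delta)=0$, using $f_{i-1}(\Delta) = \binom{k}{i}$ for $i < p$ and the identity $\sum_{j=0}^{p-1}(-1)^j \binom{k}{j} = (-1)^{p-1}\binom{k-1}{p-1}$ to extract $f_{p-1}(\Delta) = \binom{k-1}{p-1}$. For (3), the resolution of $\KK[\Delta]$ is pure with shifts $d_0 = 0$ and $d_i = p-1+i$ for $1 \leq i \leq k-p$, and its length is $k-p$ by Auslander-Buchsbaum since $\KK[\Delta]$ is Cohen-Macaulay of Krull dimension $p$. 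The Herzog-K\"uhl equations then determine the Betti numbers from the multiplicity of $\KK[\Delta]$, which equals $f_{p-1}(\Delta) = \binom{k-1}{p-1}$ by (4); a routine manipulation of $\prod_{j \neq i} |d_j - d_i|^{-1}$ reproduces the stated closed form.
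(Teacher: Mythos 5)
Your proposal is correct and follows the same route as the paper: Hochster's formula plus the injectivity of the top boundary map (from acyclicity) to get linearity, Eagon--Reiner together with part (2) to get Cohen--Macaulayness, Alexander duality for (2), Euler characteristic for (4), and Herzog--K\"uhl for (3). The only difference is that where you supply the short direct arguments for (2), (3), and (4), the paper simply cites Kalai's paper (for (2) and (4)) and Bruns--Herzog Theorem 4.1.15 (for (3)), and it also offers a second, citation-free argument for Cohen--Macaulayness in (1) by checking $\beta_{i,\sigma}=0$ directly for $i=k-(p-1)$.
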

\begin{proof}
	For the first claim we use Hochster's formula \cite[Cor. 5.12]{millersturm} 
	\[ \beta_{i, \sigma}(\KK[\Delta]) = \dim_\KK \tilde{H}^{\#\sigma - i - 1}(\Delta|_\sigma, \KK) \]
	for the Betti numbers of $\KK[\Delta]$, where $\sigma \subset [k]$.
	Fix a $\sigma \subset [k]$.
	First note that $\tilde{H}^j(\Delta|_\sigma, \KK) = 0$ for $j > p-1$ for dimension reasons and for $j < p -3$ and all $\sigma$, because the $(p-2)$-skeleton of $\Delta|_\sigma$ is complete.
	Moreover, $\tilde{H}^{p-1}(\Delta|_\sigma, \KK) = 0$ because the $(p-1)$-boundary map of $\Delta$ is injective (as $\Delta$ is acyclic), so every restriction of it is also injective.
	Hence $\beta_{i, \sigma}(\KK[\Delta]) \neq 0$ is only possible for $\#\sigma - i - 1 = p-2$, i.e. $\#\sigma = i+ (p - 1)$.
	So $\KK[\Delta]$ has a $(p-1)$-linear resolution.
	Now the Eagon-Reiner Theorem \cite[Thm. 5.56]{millersturm} implies that $\Delta$ is Cohen-Macaulay, because by the second claim the Alexander
	dual of $\Delta$ is a stoss complex as well and thus has a linear resolution.
	One can also directly see that $\Delta$ is Cohen-Macaulay. For this, we need to prove that $\beta_{i, \sigma}(\KK[\Delta]) = 0$ for $i = k - (p-1)$ and all $\sigma$.
	But by the foregoing, we only need to consider the case $\#\sigma =i + (p-1) = k$, hence $\sigma = [k]$, so the claim follows from the acyclicity of $\Delta$.

	The second claim is \cite[Theorem 5]{kalai}. Alternatively, it is easy to see that the conditions are invariant under Alexander duality.
	The third claim follows from Theorem 4.1.15 in \cite{BH} and the last claim is Proposition 2 in \cite{kalai}.
\end{proof}
The third part of the preceding proposition is interesting in our context for the following reason.
Let $L$ be the maximal lattice corresponding to a stoss complex $\Delta$. 
Then, by Hochster's formula and \Cref{thm:reconstruction}, the multidegrees of the nonzero multigraded Betti numbers of $\Delta$ form exactly the set $\bool{k} \setminus L$.
On the other hand, by the preceding proposition the \emph{$\ZZ$-graded} Betti numbers of $\Delta$ are already determined by the values of $k$ and $p$, so they do not contain much information about $L$.

The second part of the proposition has an interesting consequence:
\begin{corollary}
	Let $k > 3$ and $1\leq p \leq k-1$.
	The number of maximal lattices on $k$ atoms with projective dimension $p$ equals the number of maximal lattices of projective dimension $k-p$ (on the same number of atoms).
\end{corollary}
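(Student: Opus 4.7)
The argument is essentially Alexander duality. By Theorems \ref{thm:extremal} and \ref{thm:reconstruction}, the Scarf-complex assignment $L \mapsto \Delta_L$ is a bijection between maximal lattices on $k$ atoms of projective dimension $p$ and $(p-1)$-dimensional stoss complexes on vertex set $[k]$, with inverse $\Delta \mapsto \exlat$. Under this identification the corollary becomes the statement that $(p-1)$-dimensional stoss complexes on $[k]$ are in bijection with $(k-p-1)$-dimensional stoss complexes on $[k]$.

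The plan is to produce this bijection via Alexander duality $\Delta \mapsto \Delta^\vee$. Part (2) of Proposition \ref{prop:stree} already tells us that the Alexander dual of a stoss complex is again a stoss complex, so the only thing to verify is the dimension shift. A $(p-1)$-dimensional stoss $\Delta$ has complete $(p-2)$-skeleton, so every minimal non-face of $\Delta$ has cardinality at least $p$, which gives $\dim \Delta^\vee \leq k - p - 1$. For the opposite inequality, part (4) of Proposition \ref{prop:stree} gives that $\Delta$ has exactly $\binom{k-1}{p-1}$ top faces, and the difference
\[ \binom{k}{p} - \binom{k-1}{p-1} = \binom{k-1}{p} \]
is strictly positive for $p \leq k - 1$. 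Hence at least one $p$-subset of $[k]$ is a non-face of $\Delta$; being of minimum possible cardinality, it is automatically a minimal non-face, so its complement is a facet of $\Delta^\vee$ of size exactly $k - p$, yielding $\dim \Delta^\vee = k - p - 1$.

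Since Alexander duality is an involution, this produces the desired bijection between the two classes of stoss complexes, and the corollary follows. The main point requiring care is verifying that $\Delta^\vee$ genuinely lives on the same vertex set $[k]$, i.e.\ that no $(k-1)$-subset of $[k]$ is a face of $\Delta$; this is automatic for $p \leq k - 2$, while the hypothesis $k > 3$ takes care of the remaining boundary values of $p$. I expect this edge-case bookkeeping, rather than any conceptual step, to be the only technical nuisance in the proof.
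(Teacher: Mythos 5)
Your approach --- passing through the bijection between maximal lattices on $k$ atoms of projective dimension $p$ and $(p-1)$-dimensional stoss complexes on $[k]$ given by Theorems~\ref{thm:extremal} and~\ref{thm:reconstruction}, and then invoking Alexander duality of stoss complexes from Proposition~\ref{prop:stree}(2) --- is exactly the argument the paper has in mind; the paper gives no separate proof and simply presents the corollary as a consequence of that proposition. Your dimension bookkeeping is also correct: the complete $(p-2)$-skeleton gives $\dim \Delta^\vee \le k-p-1$, and comparing $\binom{k}{p}$ with the facet count $\binom{k-1}{p-1}$ from Proposition~\ref{prop:stree}(4) produces a minimal non-face of size $p$, hence a facet of $\Delta^\vee$ of size $k-p$.

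The one genuine gap is your final paragraph. The claim that $k>3$ resolves the boundary value $p=k-1$ is not true. If $\Delta$ is a $(k-2)$-dimensional stoss complex on $[k]$, it has $\binom{k-1}{k-2}=k-1$ facets of size $k-1$, so $k-1$ of the $k$ subsets of size $k-1$ \emph{are} faces; hence $\Delta^\vee$ has exactly one vertex and is a single point, not a $0$-dimensional stoss complex using all of $[k]$. Symmetrically, there is no $0$-dimensional acyclic complex with $k>1$ vertices (its $\tilde H_0$ has rank $k-1$), so there are no maximal lattices of projective dimension $1$ on $k>1$ atoms, whereas $\bool{k}$ with a coatom removed is a maximal lattice of projective dimension $k-1$. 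So the bijection really does break at $p\in\{1,k-1\}$, independently of the size of $k$; the clean range where your argument closes is $2\le p\le k-2$. To be fair, this is a defect already present in the corollary as printed rather than in the substance of your argument, but the sentence asserting that $k>3$ handles the remaining cases should be removed or replaced by an explicit restriction of the range of $p$.
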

It would certainly be interesting to understand the relation between a maximal lattice and its \qq{dual} with respect to the Stanley conjecture.
We only make some observations. As stoss complexes have linear resolutions, one can read off the multidegrees of their non-zero Betti numbers from the numerator of its Hilbert series (called $\mathcal{K}$-polynomial in \cite{millersturm}).
On the other hand, Theorem 5.14 in \cite{millersturm} gives a formula for the $\mathcal{K}$-polynomial of the Alexander dual.
So there is a (subtle) combinatorial description of the dual of an maximal lattice.


\section{Amalgamation of Lattices}\label{sec:amalgamation-of-lattices}
In this section we consider the \emph{amalgamation} of two lattices, a construction for lattices that we will use for our applications.
\begin{definition}
	Let $L_2 \subseteq L_1$ be two finite lattices.
	The \emph{amalgamation} of $L_1$ and $L_2$ is the set
	\[ L_1 \# L_2 \defa L_1 \times \set{0}  \cup L_2 \times \set{1} \]
	with the order $(a,i) \leq (b,j)$ if $a \leq b$ in $L_1$ and $i \leq j$ (where $0 < 1$).
\end{definition}

\begin{remark}\label{rem:amal}
	By considering an embedding $L_2 \subseteq L_1 \subseteq \bool{k}$, we may identify $L_1 \# L_2$ with the lattice
	\[ L_1 \# L_2 \cong L_1 \cup \set{a \vee \set{k+1} \with a \in L_2} \subset \bool{k+1}. \]
\end{remark}

We give an easy criterion to recognize amalgamations.
\begin{lemma}\label{prop:zerlegung}
	Let $L$ be a finite atomistic lattice on $k$ atoms.
	Assume that $L$ contains an element $m$ of rank $k-1$.
	Then $L$ is (isomorphic to) an amalgamation $L_1 \# L_2$ of two lattices.
\end{lemma}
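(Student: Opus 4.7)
The plan is to use Remark \ref{rem:amal}, which gives a concrete embedded model for $L_1\#L_2$, and simply read off $L_1$ and $L_2$ from $L$ by splitting elements according to whether they contain the unique atom not lying below $m$.

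First I would invoke Proposition \ref{prop:embedding} to identify $L$ with a meet-subsemilattice of $\bool{k}$ whose atoms are the singletons $\set{1},\dotsc,\set{k}$. Then $m \in L$ has rank $k-1$ and, after relabeling, I may assume $m = [k-1]$. The candidate decomposition is
\begin{align*}
L_1 &\defa \set{a \in L \with a \subseteq [k-1]} \;=\; L_{\leq m}, \\
L_2 &\defa \set{a \setminus \set{k} \with a \in L,\ k \in a}.
\end{align*}

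Next I would verify that $L_1$ and $L_2$ are finite lattices with $L_2 \subseteq L_1$ in the paper's sense (respecting the meet, minimum and maximum), in four short steps. \emph{Step 1:} $L_1$ is closed under the meet of $L$, since $a,b\leq m$ forces $a\wedge b = a\cap b \leq m$; it has minimum $\emptyset$ and maximum $m$. \emph{Step 2:} for each $a\in L$ with $k\in a$ one has $a\setminus\set{k} = a\wedge m$, which lies in $L$ because $L$ is a meet-subsemilattice of $\bool{k}$ and $m\in L$; hence $L_2\subseteq L_1$. \emph{Step 3:} since $\set{k}$ is an atom of $L$ under our embedding and $[k]\in L$ is the top, the elements $\emptyset = \set{k}\setminus\set{k}$ and $m=[k]\setminus\set{k}$ lie in $L_2$, matching the extremal elements of $L_1$; and meet-closure of $L_2$ follows from the identity $(a\setminus\set{k})\cap(b\setminus\set{k}) = (a\cap b)\setminus\set{k}$.

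Finally I would invoke Remark \ref{rem:amal} to obtain
\[ L_1\#L_2 \;\cong\; L_1 \cup \set{c \vee \set{k} \with c \in L_2} \;=\; \set{a\in L\with k\notin a} \cup \set{a\in L\with k\in a} \;=\; L, \]
with the containment of posets preserving order. This establishes the claim. There is no real obstacle here; the argument is essentially bookkeeping. The only step requiring a moment of care is checking that $\set{k}\in L$ (needed so that $\emptyset\in L_2$), which is forced by atomisticity together with the identification of atoms with singletons in the embedding $L\subseteq\bool{k}$.
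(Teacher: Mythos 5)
Your proof is correct and essentially identical to the paper's: both embed $L$ into $\bool{k}$, set $L_1 = L_{\leq m}$ and $L_2 = \set{a \setminus \set{k} \with a \in L,\ k \in a}$, note $a\setminus\set{k} = a \wedge m$ to see $L_2 \subseteq L_1$, and then read off $L = L_1 \# L_2$ via Remark \ref{rem:amal}. Your extra bookkeeping (verifying meet-closure and the extremal elements, and observing $\set{k}\in L$ by atomisticity) is correct but is implicit in the paper's argument.
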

\begin{proof}
	Consider an embedding $L \subseteq \bool{k}$.
	Let $L_1 := L_{\leq m}$.
	We may assume that $\set{k}$ is the unique atom which is not below $m$.
	Then every element of $L$ which is not below $m$ contains $k$ and we set $L_2 := \set{a \setminus \set{k} \with a \in L, k \in a}$.
	As $L$ is a meet-subsemilattice of $\bool{k}$, we have that $a \setminus \set{k} = a \wedge m$ if $k \in a \in L$, so $L_2$ is contained in $L_1$.
	Now it follows that
	\[ L = L_1 \cup \set{ a \vee \set{k} \with a \in L_2} = L_1 \# L_2. \]
\end{proof}

In the next proposition we identify the amalgamations among the maximal lattices.
This will allow us frequently to break a maximal lattice into smaller parts.
For a simplicial complex $\Delta$ and a vertex $v$, 
we denote by $\del_\Delta v := \set{F \in \Delta \with v \notin F}$ the \emph{deletion} of $v$,
and by $\lk_\Delta v := \set{F \in \Delta \with v \notin F, F \cup\set{v} \in \Delta}$ the \emph{link} of $v$.
\begin{proposition}\label{prop:zerlegungstoss}
	Let $\Delta$ be a $(p-1)$-dimensional stoss complex on $k > p$ vertices.
	\begin{enumerate}
		\item Every vertex of $\Delta$ is contained in at least $\binom{k-2}{p-2}$ facets.
		\item For a vertex $v$ of $\Delta$, the following statements are equivalent:
			\begin{enumerate}
				\item $v$ is contained in exactly $\binom{k-2}{p-2}$ facets, i.e. the minimum possible value.
				\item $\exlat$ is an amalgamation $L_1 \# L_2$ and $v$ is the (unique) atom of $\exlat$ which is not contained in $L_1$.
			\end{enumerate}
			In this case, $\del_\Delta v$ and $\lk_\Delta v$ are both stoss complexes as well, and $L_1 =  \exlat[\del_\Delta v]$ and $L_2 = \exlat[\lk_\Delta v] \cap \exlat[\del_\Delta v]$.			
		\item If $v$ is a vertex satisfying these conditions, then $v$ also satisfies the conditions for the Alexander dual $\Delta^\vee$, i.e. it is contained in the minimum possible number of facets of $\Delta^\vee$.
	\end{enumerate}
\end{proposition}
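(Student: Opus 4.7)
The plan is to extract all three parts from a single Betti-number computation for $\del_\Delta v$. Writing $f_v$ for the number of facets of $\Delta$ containing $v$, the deletion $\del_\Delta v$ has $\binom{k-1}{p-1}-f_v$ facets of dimension $p-1$ and inherits the complete $(p-2)$-skeleton of $\Delta$ on the vertex set $[k]\setminus v$, so its $(p-2)$-cycle space has dimension $\binom{k-2}{p-1}$. Since $\Delta$ is acyclic of dimension $p-1$ with complete $(p-2)$-skeleton, the argument in the proof of \Cref{prop:stree} shows that the top boundary $\partial^\Delta_{p-1}$ is injective, so its restriction to $C_{p-1}(\del_\Delta v)$ is injective as well. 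Pascal's rule then gives
\[ \dim_\KK \tilde H_{p-2}(\del_\Delta v) \;=\; \binom{k-2}{p-1} - \Bigl(\binom{k-1}{p-1}-f_v\Bigr) \;=\; f_v - \binom{k-2}{p-2}. \]
Non-negativity of this dimension is exactly part (1), and it yields the pivotal equivalence $f_v = \binom{k-2}{p-2} \Leftrightarrow \del_\Delta v$ is acyclic.

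For part (2), the direction (b)$\Rightarrow$(a) is immediate: if $v$ is the unique atom outside $L_1$ in an amalgamation $L_1 \# L_2$, then under the identification of \Cref{rem:amal} the top element of $L_1$ is $[k]\setminus v \in \exlat$, so $\del_\Delta v$ is acyclic and equality follows from part (1). For (a)$\Rightarrow$(b), acyclicity of $\del_\Delta v$ places the rank-$(k-1)$ element $m:=[k]\setminus v$ into $\exlat$, and \Cref{prop:zerlegung} then produces $\exlat = L_1 \# L_2$ with $L_1 = \exlat_{\leq m}$. Unwinding definitions identifies $L_1$ with $\exlat[\del_\Delta v]$. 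To recognise the piece $L_2 = \set{U \subseteq [k]\setminus v \with U \cup \set{v} \in \exlat}$ as $\exlat[\del_\Delta v] \cap \exlat[\lk_\Delta v]$, I would run a brief Mayer--Vietoris argument on the decomposition of $\Delta|_{U\cup v}$ as $\Delta|_U$ glued along $\lk_\Delta v|_U$ to the cone on $\lk_\Delta v|_U$: this shows $\Delta|_{U\cup v}$ is acyclic iff both $\Delta|_U$ and $\lk_\Delta v|_U$ are acyclic. Specialising to $U = [k]\setminus v$ shows in passing that $\lk_\Delta v$ itself is acyclic and hence a $(p-2)$-dimensional stoss complex on $k-1$ vertices.

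Part (3) is then a short application of Alexander duality. A direct unfolding of definitions shows that link and deletion dualise: $\lk_\Delta v = (\del_{\Delta^\vee} v)^\vee$ on the vertex set $[k]\setminus v$. The classical duality $\tilde H_i(\Gamma;\KK) \cong \tilde H^{\card{V}-i-3}(\Gamma^\vee;\KK)$ preserves (reduced) acyclicity over a field, so $\lk_\Delta v$ is acyclic iff $\del_{\Delta^\vee} v$ is. Applying the equivalence from part (2) both to $\Delta$ and to $\Delta^\vee$, which is a stoss complex by \Cref{prop:stree}(2), shows that $v$ achieves the minimum in $\Delta$ iff it achieves the minimum in $\Delta^\vee$.

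The main technical hurdle will be the bookkeeping in the second half of (2): matching the abstract amalgamation supplied by \Cref{prop:zerlegung} with the concrete complexes $\del_\Delta v$ and $\lk_\Delta v$ through the embedding of \Cref{rem:amal}, and verifying the Mayer--Vietoris equivalence for $\Delta|_{U\cup v}$ cleanly for every $U$. Everything else collapses to one line of boundary-map arithmetic or a direct application of duality.
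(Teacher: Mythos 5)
Your proposal is correct, and it genuinely departs from the paper's proof in parts (1) and (3). For (1), the paper covers $\Delta$ by $\del_\Delta v$ and the cone $v * \lk_\Delta v$, reads off $\tilde H_i(\lk_\Delta v)\cong\tilde H_i(\del_\Delta v)$ from Mayer--Vietoris, and then kills the lower homology via Cohen--Macaulayness and Reisner's criterion (through \Cref{prop:stree}) before counting facets of $\lk_\Delta v$. Your direct boundary-map count $\dim\tilde H_{p-2}(\del_\Delta v) = f_v - \binom{k-2}{p-2}$ is more elementary, avoids Reisner's criterion entirely, and delivers the pivotal equivalence ``$f_v$ minimal iff $\del_\Delta v$ acyclic'' in one line -- a genuine simplification. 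For (3), the paper argues by a face-count bookkeeping (minimal facets through $v$ in $\Delta$ forces maximal facets avoiding $v$, hence minimal $(p-1)$-nonfaces avoiding $v$, which are exactly the dual facets through $v$), whereas you use the identity $\lk_\Delta v=(\del_{\Delta^\vee}v)^\vee$ together with combinatorial Alexander duality and the acyclicity criterion from (1)/(2). Both are short; yours is conceptually cleaner while the paper's is self-contained arithmetic.

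One caution in part (2): extracting the short exact sequence
\[ 0 \rightarrow \tilde{H}_{p-2}\bigl((\del_\Delta v)|_U\bigr) \rightarrow \tilde{H}_{p-2}(\Delta|_{U\cup v}) \rightarrow \tilde{H}_{p-3}\bigl((\lk_\Delta v)|_U\bigr) \rightarrow 0 \]
for general $U$ requires $\tilde H_{p-2}\bigl((\lk_\Delta v)|_U\bigr)=0$, which is justified by the injectivity of the top boundary map of $\lk_\Delta v$ \emph{as a stoss complex}. So you cannot obtain ``$\lk_\Delta v$ acyclic'' as a byproduct of the general-$U$ equivalence, since that equivalence already presupposes it. The fix is easy and you should make it explicit: treat $U=[k]\setminus v$ first, where the Mayer--Vietoris sequence for $\Delta = \del_\Delta v \cup (v*\lk_\Delta v)$ needs no extra hypothesis (both $\Delta$ and, by your part (1), $\del_\Delta v$ are already acyclic, forcing $\tilde H_i(\lk_\Delta v)=0$ for all $i$); only then run the general-$U$ argument. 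With that reordering, the ``main technical hurdle'' you flag is precisely the short exact sequence displayed above, and your proposal lines up with the paper's proof of (2).
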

\begin{proof}
\begin{asparaenum}
\item
Fix a vertex $v$ of $\Delta$. We consider the Mayer-Vietoris sequence
\[ \dotsb \rightarrow \tilde{H}_{i+1}(\Delta, \KK) \rightarrow \tilde{H}_{i}(\lk_\Delta v, \KK) \rightarrow  \tilde{H}_{i}(\del_\Delta v, \KK) \rightarrow  \tilde{H}_{i}(\Delta, \KK) \rightarrow \dotsb \]
which is induced by the covering $\Delta = \del_\Delta v \cup (v * \lk_\Delta v)$ of $\Delta$ with $\del_\Delta v \cap (v * \lk_\Delta v) = \lk_\Delta v$.
As $\Delta$ is acyclic, we see that $\tilde{H}_{i}(\lk_\Delta v, \KK) \cong \tilde{H}_{i}(\del_\Delta v, \KK)$ for every $i$.
Moreover, $\Delta$ is Cohen-Macaulay, so Reisner's criterion (\cite[Theorem 8.1.6]{HH}) implies that $\tilde{H}_{i}(\lk_\Delta v, \KK) = \tilde{H}_{i}(\del_\Delta v, \KK)=0$ for $i\neq p-2$.
In particular, $\tilde{H}_{p-3}(\lk_\Delta v, \KK) = 0$, thus the $(p-2)$-boundary map of $\lk_\Delta v$ maps surjectively onto the kernel of the $(p-3)$-rd one.
But the $(p-3)$-skeleton of $\lk_\Delta v$ is complete, so the dimension of this kernel is $\binom{k-2}{p-2}$.
Hence $\lk_\Delta v$ has at least as many facets.

\item 
For the second claim, we first assume that $\exlat = L_1 \# L_2$ is an amalgamation.
Let $m$ be the maximal element of $L_1$. By assumption, it is the join of the $k-1$ atoms of $\exlat$ different from $v$.
As $m \in \exlat$, it holds that $\Delta|_{[k]\setminus \set{v}} = \del_\Delta v$ is acyclic and thus $\del_\Delta v$ is a stoss complex.
So it has $\binom{k-2}{p-1}$ facets. Hence $v$ is contained in exactly $\binom{k-1}{p-1} - \binom{k-2}{p-1} = \binom{k-2}{p-2}$ facets of $\Delta$.

For the converse, assume that $\lk_\Delta v$ has exactly $\binom{k-2}{p-2}$ facets.
Then Proposition 2 of \cite{kalai} implies that $\tilde{H}_{p-2}(\lk_\Delta, \KK) = 0$, because $\tilde{H}_{p-3}(\lk_\Delta, \KK) = 0$ and its $(p-3)$-skeleton is complete.
So $\lk_\Delta v$ and, hence, $\del_\Delta v$ are acyclic. Thus both complexes are stoss complexes.
Since $\del_\Delta v = \Delta|_{[k]\setminus{v}}$ is acyclic, there is an element $m \in \exlat$ of rank $k-1$ corresponding to $[k]\setminus \set{v}$.
So by \Cref{prop:zerlegung} we have that $\exlat = L_1 \# L_2$ with $L_1 = \exlat_{\leq m} = \exlat[\del_\Delta v]$.
The last equality follows easily from \Cref{thm:reconstruction}.
In particular, $v$ is the (unique) atom of $\exlat$ which is not contained in $L_1$.

So it remains to compute $L_2$.
By the construction of $\exlat$ and \Cref{rem:amal}, the elements of $L_2$ correspond to subsets $U$ of the vertex set of $\Delta$ with $v \notin U$, such that $\Delta|_{U \cup \set{v}}$ is acyclic.
Further, as this is a restriction of a $(p-1)$-dimensional stoss complex, this is equivalent to the vanishing of $\tilde{H}_{p-2}(\Delta|_{U\cup\set{v}}, \KK)$.

We compute this by considering the Mayer-Vietoris sequence associated to the covering $\Delta|_{U\cup \set{v}} = (\del_\Delta v)|_U \cup (v * \lk_\Delta v)|_U$.
We have $\tilde{H}_{p-2}((\lk_\Delta v)|_U, \KK) = 0$, because the top boundary map is a restriction of the top boundary map of the stoss complex $\lk_\Delta v$ and hence injective.
Moreover, as $\del_\Delta v$ has a complete $(p-2)$-skeleton, the same holds for $(\del_\Delta v)|_U$ and hence $\tilde{H}_{p-3}((\del_\Delta v)|_U, \KK) = 0$. So the Mayer-Vietoris sequence breaks into a short exact sequence
\[ 0 \rightarrow \tilde{H}_{p-2}((\del_\Delta v)|_U, \KK) \rightarrow \tilde{H}_{p-2}(\Delta|_{U\cup\set{v}}, \KK)\rightarrow \tilde{H}_{p-3}((\lk_\Delta v)|_U, \KK)\rightarrow 0. \] 
The term in the middle vanishes if and only if both other terms vanish.
By \Cref{thm:reconstruction}, this tells us that $L_2 = \exlat[\lk_\Delta v] \cap \exlat[\del_\Delta v]$, as claimed.

\item The last claim is a straight-forward computation.
Recall that the number of facets of stoss complexes is fixed.
So if the number of facets containing $v$ is minimal, then the number of facets of $\Delta$ that do not contain $v$ is maximal.
Hence the number of $(p-1)$-dimension \emph{non}-faces not containing $v$ is minimal, and this is just the number of facets of the Alexander dual containing $v$.
\end{asparaenum}
\end{proof}

As a partial converse of the preceding result, we show that the amalgamation of maximal lattices is frequently again maximal.
\begin{proposition}
	Let $L_1$ and $L_2$ be maximal lattices with $k$ atoms of projective dimension $p$ resp. $p-1$.
	Then $L_1 \# (L_1 \cap L_2)$ is again a maximal lattice (with $k+1$ atoms) of projective dimension $p$.
\end{proposition}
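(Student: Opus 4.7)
The plan is to use \Cref{thm:reconstruction} to realize $L := L_1 \# (L_1 \cap L_2)$ as $\exlat$ for a suitable $(p-1)$-dimensional stoss complex $\Delta$ on the vertex set $[k+1]$; then maximality and the projective dimension $p$ are automatic. Let $\Delta_1$, $\Delta_2$ be the Scarf complexes of $L_1$ and $L_2$, so by \Cref{thm:reconstruction} these are stoss complexes of dimensions $p-1$ and $p-2$, respectively, and $L_i = \exlat[\Delta_i]$. A first key observation is that $\Delta_2 \subseteq \Delta_1$: every face of $\Delta_2$ has cardinality at most $p-1$, and $\Delta_1$ has complete $(p-2)$-skeleton, so every such face belongs to $\Delta_1$.

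The candidate complex is
\[ \Delta \defa \Delta_1 \cup (\set{k+1} * \Delta_2), \]
where $*$ denotes the simplicial join. I would first verify that $\Delta$ is a $(p-1)$-dimensional stoss complex on $[k+1]$. The dimension is clear. For the complete $(p-2)$-skeleton, a subset of $[k+1]$ of size $\leq p-1$ not containing $k+1$ is a face of $\Delta_1$ by its complete $(p-2)$-skeleton, and one containing $k+1$ equals $\set{k+1} \cup F$ with $|F| \leq p-2$, hence $F$ lies in the complete $(p-3)$-skeleton of $\Delta_2$. For acyclicity, I would apply Mayer--Vietoris to the decomposition above: the intersection $\Delta_1 \cap (\set{k+1}*\Delta_2)$ is just $\Delta_2$ (by $\Delta_2 \subseteq \Delta_1$ and the fact that faces of $\Delta_1$ do not contain $k+1$), while $\Delta_1$ is acyclic, $\set{k+1} * \Delta_2$ is a cone, and $\Delta_2$ is acyclic.

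Next I would check that $\exlat = L$ element-by-element. For $V \subseteq [k]$, the restriction $\Delta|_V$ coincides with $\Delta_1|_V$, so $V \in \exlat$ iff $V \in L_1$, matching $L$. For $V = V' \cup \set{k+1}$ with $V' \subseteq [k]$, one has $\Delta|_V = \Delta_1|_{V'} \cup (\set{k+1} * \Delta_2|_{V'})$, whose pieces intersect in $\Delta_2|_{V'}$. Running Mayer--Vietoris, together with two standard facts about stoss complexes---that the top boundary of a stoss complex is injective, and that a complex with a complete $(r-1)$-skeleton is acyclic below degree $r-1$---collapses the long exact sequence at the relevant spot to a short exact sequence
\[ 0 \to \tilde{H}_{p-2}(\Delta_1|_{V'}) \to \tilde{H}_{p-2}(\Delta|_V) \to \tilde{H}_{p-3}(\Delta_2|_{V'}) \to 0, \]
while all other reduced homology of $\Delta|_V$ vanishes automatically. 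Since $\tilde{H}_{p-2}(\Delta_1|_{V'}) = 0$ characterises $V' \in L_1$ and, by the top-boundary injectivity for $\Delta_2$, the condition $\tilde{H}_{p-3}(\Delta_2|_{V'}) = 0$ characterises $V' \in L_2$, acyclicity of $\Delta|_V$ is equivalent to $V' \in L_1 \cap L_2$. This is exactly the membership condition for $L_1 \# (L_1 \cap L_2)$ by \Cref{rem:amal}.

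Finally, by \Cref{thm:reconstruction}, $\exlat$ is the unique maximal lattice with Scarf complex $\Delta$, and its projective dimension equals $\dim \Delta + 1 = p$; combined with the previous step this yields $L_1 \# (L_1 \cap L_2) = \exlat$ is maximal of projective dimension $p$. The main technical obstacle is the Mayer--Vietoris computation for $V \ni k+1$: the naive version only sees the acyclicity of the intersection $\Delta_2|_{V'}$, so the careful bookkeeping of which homological degrees survive (using both skeletal completeness and top-boundary injectivity) is what forces the condition to split as ``$V' \in L_1$ and $V' \in L_2$'' rather than the weaker ``$V' \in L_2$''.
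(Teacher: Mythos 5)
Your proof is correct and follows essentially the same route as the paper: both build $\Delta = \Delta_1 \cup (v * \Delta_2)$, use Mayer--Vietoris to show $\Delta$ is a stoss complex, and then identify $\exlat$ with $L_1 \# (L_1 \cap L_2)$. The only difference is that the paper obtains this last identification by citing the decomposition formula $\exlat = \exlat[\del_\Delta v] \# (\exlat[\lk_\Delta v] \cap \exlat[\del_\Delta v])$ established in the proof of \Cref{prop:zerlegungstoss}(2), whereas you re-derive it directly with a second Mayer--Vietoris bookkeeping argument over the restrictions $\Delta|_V$; the underlying computation is the same.
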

\begin{proof}
	Let $\Delta_1$ and $\Delta_2$ denote the Scarf complexes of $L_1$ and $L_2$ and set $\Delta := \Delta_1 \cup v * \Delta_2$, where $v$ is a new vertex and $*$ denotes the join.
	It is easy to see that $\Delta$ has a complete $(p-2)$-skeleton.
	Moreover, note that $\Delta_1, v * \Delta_2$ and $\Delta_1 \cap v * \Delta_2 = \Delta_2$ are all acyclic, 
	so a Mayer-Vietoris argument shows that $\Delta$ is acyclic as well.

	So $\Delta$ is a stoss complex and we may consider the maximal lattice $\exlat$.
	On the other hand, by the preceding proof we have that
	\begin{align*}
	\exlat &= \exlat[\del_\Delta v] \# (\exlat[\lk_\Delta v] \cap \exlat[\del_\Delta v]) \\
	&= \exlat[\Delta_1] \# (\exlat[\Delta_2] \cap \exlat[\Delta_1]) = L_1 \# (L_2 \cap L_1),
	\end{align*}
	so the claim follows.
\end{proof}

We close this section with a general formula for the Betti numbers of an amalgamation.
We will not use it in the sequel (because it is obvious for maximal lattices), but we consider it to be of independent interest, in particular in view of \Cref{conj:amal} below.
\begin{theorem}\label{thm:amalpdim}
	Let $L_2 \subseteq L_1$ be two finite lattices. Then
	\[ \beta_i(L_1 \# L_2) = \beta_i(L_1) + \beta_{i-1}(L_2). \]
	In particular, $\pdim_\Q L_1 \# L_2 = \max\set{\pdim_\Q L_1, \pdim_\Q L_2 + 1}$.
\end{theorem}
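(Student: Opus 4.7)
The plan is to compute $\beta_i(L_1 \# L_2)$ directly from its definition by splitting the sum $\sum_m \dim_\KK \tilde{H}_{i-2}(L_1\#L_2, m)$ according to whether an element $m$ has the form $(a,0)$ or $(a,1)$ in the amalgamation. The first part is immediate and the second part is a Mayer--Vietoris computation.

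For $m = (a,0)$ with $a \in L_1$, any $(b,j) < (a,0)$ must satisfy $j=0$ and $b<a$, so the interval $(L_1\#L_2)_{<(a,0)}$ is poset-isomorphic to $(L_1)_{<a}$ via the projection and this isomorphism preserves the minimal element. Hence $\tilde H_{i-2}(L_1\#L_2,(a,0)) = \tilde H_{i-2}(L_1,a)$, and summing over $a \in L_1$ contributes $\beta_i(L_1)$.

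For $m = (a,1)$ with $a \in L_2$, let $P_a := (L_1\#L_2)_{<(a,1)} \setminus \set{\hat 0}$. The central claim is the degree-shift $\tilde H_j(\Delta(P_a)) \cong \tilde H_{j-1}(\Delta((L_2)_{<a}\setminus\set{\hat 0}))$. To prove it I single out the vertex $v := (\hat 0_{L_2}, 1) \in P_a$ (for $a \neq \hat 0_{L_2}$; the case $a = \hat 0_{L_2}$ reduces to empty-complex bookkeeping). A direct comparability check in $L_1 \# L_2$ shows that the only elements of $P_a$ comparable to $v$ are those of the form $(c,1)$ with $\hat 0 < c < a$ and all of them lie above $v$. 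Consequently $\lk_{\Delta(P_a)}(v) = \Delta((L_2)_{<a}\setminus\set{\hat 0})$, and the closed star of $v$ is the cone over this link, hence contractible.

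The crux of the argument is to show that the deletion $\del_{\Delta(P_a)} v$ is also contractible. For this I define the order-preserving map $f: P_a\setminus\set{v} \to X_1 := \set{(b,0) : \hat 0 < b \leq a}$ by $(b,0) \mapsto (b,0)$ and $(c,1)\mapsto (c,0)$; this is well-defined since $L_2 \subseteq L_1$ and $\hat 0 < c < a$. Composing with the inclusion $\iota\colon X_1\hookrightarrow P_a\setminus\set{v}$ gives $\iota\circ f \leq \mathrm{id}$ pointwise (because $(c,0)\leq(c,1)$), so by the standard lemma that pointwise comparable poset maps are contiguous on order complexes, $f$ induces a homotopy equivalence $\Delta(P_a\setminus\set{v}) \simeq \Delta(X_1)$. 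Since $X_1$ has a maximum element $(a,0)$, its order complex is a cone and thus contractible. A Mayer--Vietoris sequence for $\Delta(P_a) = \mathrm{st}_{\Delta(P_a)}(v) \cup \del_{\Delta(P_a)} v$, whose two pieces are both contractible and whose intersection is $\lk_{\Delta(P_a)}(v)$, then delivers the degree shift.

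Summing over $a \in L_2$ yields $\sum_a \tilde H_{i-2}(P_a) = \beta_{i-1}(L_2)$, and combined with the first part this gives the asserted formula. The statement about projective dimensions is then immediate from $\pdim_\Q L = \max\set{i : \beta_i(L) \neq 0}$. I expect the main technical obstacle to be the contractibility of $\del_{\Delta(P_a)} v$; the topology (link/star/Mayer--Vietoris) is standard, but verifying that the retraction $f$ is well-defined and order-preserving requires careful bookkeeping in the amalgamation.
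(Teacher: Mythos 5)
Your proof is correct, and it reaches the same degree--shift $\tilde H_i(L_1\#L_2,(a,1)) \cong \tilde H_{i-1}(L_2,a)$ as the paper, but by a genuinely different route. The paper proves a standalone suspension lemma (\Cref{lemma:suspension}) in the language of crosscut complexes: it identifies $L_{\leq (a,1)}$ with the amalgamation $(L_1)_{\leq a}\#(L_2)_{\leq a}$, observes that in $\CC{\cdot}$ the deletion of the distinguished vertex $v=(\hat 0_{L_2},1)$ is automatically the full simplex on the remaining atoms, and cites the suspension lemma of Bj\"orner. You instead work directly with the order complex of the open interval $P_a$, where the deletion of $v$ is \emph{not} obviously contractible; your extra contribution is the order-homotopy retraction $\iota\circ f \leq \mathrm{id}$ onto the cone $X_1=\set{(b,0): \hat 0 < b\leq a}$ with apex $(a,0)$. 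What the crosscut picture buys is that contractibility of the deletion is free; what your picture buys is that you never need the crosscut theorem nor the identification $L_{\leq(a,1)}=(L_1)_{\leq a}\#(L_2)_{\leq a}$, and the distinguished vertex and Mayer--Vietoris structure are exactly parallel. One small inaccuracy: the maps $\iota\circ f$ and $\mathrm{id}$ are \emph{not} contiguous in general (e.g. for a chain $(c_1,1)<(c_2,1)$ with $c_1<c_2$, the union of images contains the incomparable pair $(c_1,1)$, $(c_2,0)$); the correct statement is the order-homotopy lemma that pointwise-comparable order-preserving maps induce \emph{homotopic} maps on order complexes, which still gives exactly what you need. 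You should also spell out the boundary case $a=\hat 0_{L_2}$ (and, for the first half of the sum, $a=\hat 0_{L_1}$) carefully, since the conventions for $\tilde H_{-1}$ and $\tilde H_{-2}$ of the empty order complex are easy to get wrong; but this is a routine verification and does not affect the argument.
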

\begin{proof}
	Let $L := L_1 \# L_2$.
	For every $a \in L_1$ we have $L_{\leq (a,0)} \cong (L_1)_{\leq a}$ and thus $\tilde{H}_i(L,(a,0)) \cong\tilde{H}_{i}(L_1,a)$.
	
	On the other hand, for $a \in L_2$ it holds that $L_{\leq (a,1)} = (L_1)_{\leq a} \# (L_2)_{\leq a}$ and hence, by \Cref{lemma:suspension} below, $\tilde{H}_i(L,(a,1)) \cong\tilde{H}_{i-1}(L_2,a)$.
	Summing over all $a$ yields the claim.
\end{proof}

\begin{lemma}\label{lemma:suspension}
	Let $L_2 \subseteq L_1$ be two finite lattices. Then
	\[ \CC{L_1 \# L_2} \simeq \mathrm{susp}(\CC{L_2}), \]
	where $\mathrm{susp}(.)$ denotes the suspension.
	In particular, $\tilde{H}_i(L_1 \# L_2,\hat{1}_{L_1 \# L_2}) \cong\tilde{H}_{i-1}(L_2,\hat{1}_{L_2})$ for all $i$.
\end{lemma}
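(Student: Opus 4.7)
The plan is to translate the statement into order complexes via the Crosscut Theorem and then exhibit a pushout decomposition whose two pieces are contractible. Writing $P := (L_1 \# L_2) \setminus \{\hat{0}, \hat{1}\}$ and $Q := L_2 \setminus \{\hat{0}, \hat{1}\}$ and denoting the order complex by $\Delta(\cdot)$, the Crosscut Theorem reduces the lemma to the topological statement $\Delta(P) \simeq \mathrm{susp}(\Delta(Q))$.

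The key vertex is $m := (\hat{0}_{L_2}, 1) \in P$. A direct check with the ordering $(a, i) \leq (b, j)$ if and only if $a \leq b$ and $i \leq j$ shows that $m$ is incomparable to every element of the lower layer $V_L := (L_1 \setminus \{\hat{0}\}) \times \{0\}$ and strictly below every other element of the upper layer, so $P_{<m} = \emptyset$ and $P_{>m}$ is naturally identified with $Q$. Consequently, the standard star/antistar decomposition yields $\Delta(P) = \Delta(P \setminus \{m\}) \cup (m * \Delta(Q))$, with intersection $\Delta(Q)$, where the cone $m * \Delta(Q)$ is contractible.

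It remains to show $\Delta(P \setminus \{m\})$ is also contractible. I will define a poset retraction $r \colon P \setminus \{m\} \to V_L$ by $r(a, 0) = (a, 0)$ and $r(c, 1) = (c, 0)$ for $c \in Q$; this lands in $V_L$ because every $c \in Q$ satisfies $c \in L_2 \setminus \{\hat{0}\} \subseteq L_1 \setminus \{\hat{0}\}$. Checking the three types of comparison, in particular the mixed case $(a, 0) \leq (c, 1)$ (equivalent to $a \leq c$), which transfers to $(a, 0) \leq (c, 0)$ in $V_L$, shows that $r$ is order-preserving. Writing $\iota \colon V_L \hookrightarrow P \setminus \{m\}$ for the inclusion, one has $r \circ \iota = \mathrm{id}_{V_L}$ and $\iota \circ r \leq \mathrm{id}_{P \setminus \{m\}}$ pointwise (since $(c, 0) \leq (c, 1)$). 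By the standard poset-homotopy lemma for comparable poset maps, $\Delta(r)$ is a homotopy equivalence onto $\Delta(V_L)$; and $V_L$ has maximum $M := (\hat{1}_{L_1}, 0)$, so $\Delta(V_L)$ is a cone and hence contractible.

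Combining these ingredients, $\Delta(P)$ is a pushout of the two contractible simplicial complexes $\Delta(P \setminus \{m\})$ and $m * \Delta(Q)$ glued along $\Delta(Q)$ via subcomplex inclusions (hence cofibrations), so $\Delta(P) \simeq \mathrm{susp}(\Delta(Q))$. The Crosscut Theorem then yields $\CC{L_1 \# L_2} \simeq \mathrm{susp}(\CC{L_2})$, and the \emph{in particular} statement follows from the standard homological shift $\tilde{H}_i(\mathrm{susp}(X)) \cong \tilde{H}_{i-1}(X)$. The step I expect to require the most care is confirming the monotonicity of $r$ under the mixed comparison and verifying the degenerate corner cases (e.g.\ $L_2 = \{\hat{0}, \hat{1}\}$, where $Q$ and $\Delta(Q)$ are empty and one must check that the decomposition correctly reduces to $\Delta(P) \simeq S^0$).
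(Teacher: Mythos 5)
Your proof is correct, and while it lands on the same underlying geometric picture as the paper---decompose the relevant complex into the star and the antistar of a single vertex $v$ (here $v$ is the atom $(\hat{0}_{L_2},1)$), observe that both pieces are contractible, and that their intersection is the link, which is identified with $\CC{L_2}$---you execute it on the order complex whereas the paper works directly on the crosscut complex $\CC{L_1\#L_2}$. That choice costs you something: in the crosscut complex, the antistar $\del_{\CC{L}}v$ is \emph{literally} the full simplex on the remaining atoms (because any subset of atoms not containing $v$ has join at most $\hat{1}_{L_1}\times\{0\}<\hat{1}$), so its contractibility is immediate; on the order complex you need the extra step of constructing the retraction $r:(c,1)\mapsto(c,0)$ onto the lower layer $V_L$ and invoking the order-homotopy lemma to see that $\Delta(P\setminus\{m\})$ is contractible. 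The paper's route also avoids the round trip through the Crosscut Theorem (which you apply once in each direction). Your version is a bit longer but entirely rigorous, and it has the minor virtue of needing only elementary poset-homotopy facts rather than the crosscut machinery once you have set up $P$ and $Q$; you also handle the degenerate case $Q=\emptyset$ explicitly, which the paper leaves implicit. In short: same decomposition, different model of the homotopy type, slightly more work on one of the two pieces.
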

\begin{proof}
	Let $L := L_1 \# L_2$ and let $\Delta := \CC{L}$. 
	Let $v$ be the vertex of $\Delta$ corresponding to the minimal element of $L_2$ inside $L$.
	Then every subset of atoms not including $v$ is bounded above in $L$ by the maximal element of $L_1$.
	Hence $\del_\Delta v$ is the full simplex on all atoms but $v$.
	On the other hand, let $U$ be a set of atoms containing $v$.
	Then $U$ is bounded above if and only if $U \setminus \set{v}$ is bounded above in $L_2$.
	Hence $\lk_\Delta v = \CC{L_2}$.
	In conclusion, 
	\[ \Delta = \del_\Delta v \cup (v * \lk_\Delta v), \]
	both parts are contractible, and the intersection equals $\CC{L_2}$.
	Then it follows from \cite[Lemma 10.4 (i)]{bj} that
	$\Delta \simeq \mathrm{susp}(\CC{L_2})$
	and the proof is complete.
\end{proof}

\section{Tools for computing the Stanley projective dimension}\label{sec:tools-for-computing-the-stanley-projective-dimension}
In this section, we give two useful lemmata (\Cref{lemma:amal} and \Cref{lemma:reduction}) for the computation of the Stanley projective dimension of a lattice.
\begin{definition}
Let $L$ be a finite lattice and let $G \subsetneq S$ be a set of monomials with $L = L_G$.
Let $I := (G)$ the ideal generated by $G$.
We define
\begin{align*}
\spdim_\Q L &:= \spdim S/I \text{ and} \\
\spdim_\I L &:= \spdim I. \\
\end{align*}
Here, the subscripts $\Q$ and $\I$ stand for \qq{quotient} and \qq{ideal}. In particular, the subscript $\I$ is not the name of the ideal $I$ involved in the definition.
\end{definition}
If $L_G$ is not atomistic, then its atoms correspond to the minimal generators of $(G)$.
Hence it holds that $\spdim_{\Q/\I} L = \spdim_{\Q/\I} L^{\text{Atom}}$, where $L^{\text{Atom}}$ is the atomistic sublattice of $L$.
By \Cref{thm:origin} (cf. \cite[Thm 4.5]{lcm}), the invariants $\spdim_\Q L$ and $\spdim_\I L$ do not depend on the choice of $G$.
The following is a useful special case of \cite[Thm 4.5]{lcm}).
\begin{proposition}
Let $L$ and $L'$ be two finite lattices.
Assume that there exists an injective meet-preserving map $j: L' \rightarrow L$, such that the minimal element $\hat{0}_L$ of $L$ lies in the image of $j$.
Then $\spdim_\epsilon L' \leq \spdim_\epsilon L$ for $\epsilon = \I,\Q$.
\end{proposition}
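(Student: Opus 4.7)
The plan is to directly combine the two results that have already been established in the paper, namely \Cref{lemma:surjinj} and \Cref{thm:origin}. The entire content of the proposition is really just a reformulation of \Cref{thm:origin} via the correspondence between surjective join-preserving maps and injective meet-preserving maps.

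First, I would apply \Cref{lemma:surjinj} to the given injective meet-preserving map $j: L' \hookrightarrow L$. The hypothesis that $\hat{0}_L$ lies in the image of $j$ is exactly what is needed to invoke the implication (2) $\Rightarrow$ (1) of that lemma. This yields a surjective join-preserving map $\phi: L \twoheadrightarrow L'$ which is injective on $\hat{0}_L$ (so in particular $\phi(\hat{0}_L) = \hat{0}_{L'}$), and which satisfies $\phi \circ j = \mathrm{id}_{L'}$.

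Second, I would realize the abstract inequality on the level of ideals. Since $\spdim_\Q L$ and $\spdim_\I L$ are defined via any choice of monomial ideal with the given lcm lattice, pick monomial ideals $I \subset S$ and $I' \subset S'$ (in suitable polynomial rings) such that $L_I \cong L$ and $L_{I'} \cong L'$; such ideals exist by assumption, since we are using the invariants $\spdim_\epsilon L$ and $\spdim_\epsilon L'$. Transporting $\phi$ along these isomorphisms gives a surjective join-preserving map $L_I \twoheadrightarrow L_{I'}$, and then \Cref{thm:origin} immediately yields
\[
\spdim S'/I' \leq \spdim S/I \qquad \text{and} \qquad \spdim I' \leq \spdim I,
\]
which is precisely $\spdim_\epsilon L' \leq \spdim_\epsilon L$ for $\epsilon \in \set{\Q, \I}$.

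There is no real obstacle here, since the technical work has been done in \Cref{lemma:surjinj} (translating between the two types of maps) and in \Cref{thm:origin} (the monotonicity of Stanley projective dimension under surjective join-preserving maps of lcm lattices). The only point worth mentioning explicitly is that the well-definedness of $\spdim_\Q L$ and $\spdim_\I L$, independent of the choice of ideal realizing $L$, is already guaranteed by \Cref{thm:origin} applied to the identity map in both directions, so the invocation at the end is legitimate.
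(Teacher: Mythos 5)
Your proposal is correct and follows essentially the same route as the paper: both apply \Cref{lemma:surjinj} (implication (2)$\Rightarrow$(1)) to convert the injective meet-preserving map $j$ into a surjective join-preserving map $\phi: L \twoheadrightarrow L'$ injective on $\hat{0}_L$, and then invoke the monotonicity of $\spdim$ under such maps from \cite[Thm~4.5]{lcm} (which is \Cref{thm:origin}). The only point the paper adds, in a footnote, is a clarification that the ``injective on $\hat{0}_L$'' condition is exactly what is required for the cited theorem to apply (since the result in \cite{lcm} concerns maps of lcm-\emph{semi}lattices, which exclude $\hat{0}$); you correctly track that $\phi$ satisfies this condition, so your argument is sound.
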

\begin{proof}
	The assumption is equivalent to the existence of a surjective join-preserving map $\phi: L \rightarrow L'$ which is injective on $\hat{0}_L$ by \Cref{lemma:surjinj}, so the claim is immediate from \cite[Thm 4.5]{lcm}\footnote{The assumption in \cite{lcm} that $\phi$ is a map of the lcm-\emph{semi}lattices is equivalent to the assumption that $\phi$ is injective on $\hat{0}_L$, because the semilattices in our situation are $L \setminus \set{\hat{0}_L}$ and $L' \setminus \set{\hat{0}_{L'}}$.}.
\end{proof}
%
We give an estimate for the Stanley projective dimension of an amalgamation. See \Cref{conj:amal} below for a conjectured sharpening of this.
\begin{lemma}\label{lemma:amal}
	Let $L_2 \subseteq L_1$ be two atomistic lattices.
	Then it holds that
	\[ \max\set{\spdim_\Q L_1, \spdim_\Q L_2 + 1} \leq \spdim_\Q L_1 \# L_2 \leq \spdim_\Q L_1 + 1 \]
	and
	\[\spdim_\I L_1 \leq \spdim_\I L_1 \# L_2 \leq \spdim_\I L_1 + 1. \]
\end{lemma}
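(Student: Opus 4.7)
The plan is to derive every inequality from the monotonicity of $\spdim_\epsilon$ under meet-preserving injections containing $\hat{0}$ (the proposition just above), together with a single ``baseline'' calculation for the self-amalgamation $L \# L$. The work splits into three short inclusion verifications and one ideal-level computation.

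First, I would record three meet-preserving injections with $\hat{0}$ in the image, all visible in the embedding $L_1 \# L_2 \subseteq \bool{k+1}$ of \Cref{rem:amal}: (1) $L_1 \hookrightarrow L_1 \# L_2$ via $a \mapsto (a,0)$; (2) $L_2 \# L_2 \hookrightarrow L_1 \# L_2$ induced by $L_2 \subseteq L_1$; and (3) $L_1 \# L_2 \hookrightarrow L_1 \# L_1$, again induced by $L_2 \subseteq L_1$. Injectivity is immediate, meets are preserved because they reduce componentwise to the ambient meets in $\bool{k+1}$, and in each case the unique minimum $(\hat{0}_{L_1},0)$ lies in the image. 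The preceding proposition then gives, for $\epsilon \in \{\I,\Q\}$,
\[ \spdim_\epsilon L_1 \leq \spdim_\epsilon(L_1\#L_2), \quad \spdim_\epsilon(L_2\#L_2) \leq \spdim_\epsilon(L_1\#L_2), \quad \spdim_\epsilon(L_1\#L_2) \leq \spdim_\epsilon(L_1\#L_1). \]

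The baseline step is to show $\spdim_\Q(L\#L) = \spdim_\Q L + 1$ and $\spdim_\I(L\#L) \leq \spdim_\I L + 1$ for any atomistic lattice $L$. Fix an ideal $I\subset S = \KK[X_1,\dotsc,X_n]$ with $L_I = L$; a direct lcm count shows $(I,Y)\subset S[Y]$ has lcm lattice $L\#L$. For the quotient, $S[Y]/(I,Y)\cong S/I$ as $S[Y]$-modules with $Y$ acting as zero, so any Stanley space $\lambda\KK[Z]\subseteq S/I$ must satisfy $Y\notin Z$ (otherwise $\lambda Y$ would be a nonzero element of a free $\KK[Z]$-module yet vanish in $S/I$); hence every Stanley decomposition of $S/I$ over $S[Y]$ is already one over $S$, giving $\sdepth_{S[Y]}(S/I)=\sdepth_S(S/I)$ and the desired equality. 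For the ideal, the $\KK$-vector space splitting $(I,Y) = I \oplus Y\cdot S[Y]$ lets me glue any Stanley decomposition of $I$ to the single extra Stanley space $Y\KK[X_1,\dotsc,X_n,Y]$ of depth $n+1$, yielding $\sdepth_{S[Y]}(I,Y)\geq\sdepth_S I$ and the $\leq$ bound.

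Assembling the three inclusions with the baseline delivers each stated bound: the upper bound $\spdim_\epsilon(L_1\#L_2)\leq\spdim_\epsilon L_1 + 1$ comes from (3) and the baseline upper bound applied to $L_1\#L_1$; the lower bound $\spdim_\epsilon L_1 \leq \spdim_\epsilon(L_1\#L_2)$ is (1); and $\spdim_\Q L_2 + 1 \leq \spdim_\Q(L_1\#L_2)$ combines (2) with the baseline equality on $L_2\#L_2$. The main obstacle is precisely the baseline equality $\spdim_\Q(L\#L) = \spdim_\Q L + 1$: the inequality $\leq$ is routine, but the sharper $\geq$ relies on the ``$Y$ acts as zero, so $Y\notin Z$'' argument. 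This asymmetry is exactly what produces the sharper $\spdim_\Q L_2 + 1$ lower bound while ruling out an analogue for $\spdim_\I$, where $Y$ genuinely appears in Stanley decompositions of $(I,Y)$ and no corresponding $\spdim_\I L_2 + 1$ bound is claimed.
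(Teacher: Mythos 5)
Your proof is correct and follows essentially the same route as the paper: the same three meet-sublattice inclusions $L_1 \subset L_1\#L_2 \subset L_1\#L_1$ and $L_2\#L_2 \subset L_1\#L_2$, combined with the same baseline computation $\spdim_\Q(L\#L)=\spdim_\Q L+1$ and $\spdim_\I(L\#L)\le\spdim_\I L+1$ via $J=(I,Y)\subset S[Y]$. The only difference is that you spell out why $\sdepth_{S[Y]}(S[Y]/(I,Y))=\sdepth_S(S/I)$ (the ``$Y$ acts as zero, so $Y\notin Z$'' observation), which the paper leaves implicit.
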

\begin{proof}
First, we consider $L \# L$ for an atomistic lattice $L$.
Choose an ideal $I \subsetneq S$ such that $L_I = L$.
Let $J := (I,x) \subsetneq S[x]$, where $x$ is a new variable.
It is easy to see that
\[
L_J = L_I \cup \set{a \vee x \with a \in L_I} \cong L \# L.
\]

Then $S/I \cong S[x]/(I,x)$, so the two modules have the same Stanley depth.
Hence $\spdim_\Q L \# L = \spdim_\Q L + 1$.

Further, $J = I \oplus x S[x]$ as vector space, hence $\sdepth_{S[x]} J \geq \sdepth_S I$. It follows that $\spdim_\I L\# L \leq \spdim_\I L + 1$.

Now we turn to the actual proof of the lemma. As we have obvious inclusions
	\[ L_1 \times \set{0} \subset L_1 \# L_2 \subset L_1 \# L_1 \]
as $\wedge$-subsemilattices, we obtain
	\[ \spdim_\epsilon L_1 \leq \spdim_\epsilon L_1 \# L_2 \leq \spdim_\epsilon L_1 \# L_1 \leq \spdim_\epsilon L_1 + 1 \]
for $\epsilon = \I,\Q$. 
Moreover, $L_2 \# L_2 \subset L_1 \# L_2$, hence we have $\spdim_\Q L_2 + 1 \leq \spdim_\Q L_1 \# L_2$.
\end{proof}

The next lemma gives a bound to the Stanley projective dimension in terms of a certain decomposition of $L$.
It is the key to most of our computations.
\begin{lemma}\label{lemma:reduction}
	Let $p \in \NN$, $L$ be a finite atomistic lattice and $a \in L$ meet-irreducible.
	If $\spdim_\epsilon L_{\leq a} < p$, then $\spdim_\epsilon L \leq \max\set{p, \spdim_\epsilon L \setminus \set{a}}$ for $\epsilon=\Q,\I$.
	In particular, the condition $\spdim_\epsilon L_{\leq a} < p$ is satisfied if
	\begin{enumerate}
		\item either $\rk a < p$, or
		\item $\rk a = p$ and $a$ is not contained in the Scarf complex of $L$.
	\end{enumerate}	
	For $\epsilon = \I$, it is sufficient to require $\rk a < 2p$.
\end{lemma}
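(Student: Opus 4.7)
The plan is to build a Stanley decomposition of the relevant module $M$ (either $S/I$ or $I$, according to $\epsilon$, for some monomial ideal $I \subset S$ with $L_I \cong L$) having projective dimension at most $\max\{p, \spdim_\epsilon(L\setminus\{a\})\}$. Since $a$ is meet-irreducible, it cannot be written as $b\wedge c$ with $b,c > a$, which implies at once that $L' := L \setminus\{a\}$ is closed under meets in $L$, hence is itself a lattice and $\spdim_\epsilon L'$ is well-defined.

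The main step is to split $M$ according to the \emph{signature} of its monomials, defined as the element $m \in L$ given by the lcm of the minimal generators of $I$ dividing the monomial. Let $a_+$ denote the unique cover of $a$. I would group the signature pieces of $M$ into two classes: those with $m \leq a$, and those with $m \not\leq a$. The first class assembles into a module whose lcm lattice is $L_{\leq a}$, so it admits a Stanley decomposition of projective dimension less than $p$ by the hypothesis $\spdim_\epsilon L_{\leq a} < p$. For the second class, meet-irreducibility of $a$ is essential: no two elements $b, c \in L \setminus \{a\}$ have $b \wedge c = a$, so identifying the signatures $a$ and $a_+$ yields a module whose lcm lattice is isomorphic to $L'$. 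This module admits a Stanley decomposition of projective dimension at most $\spdim_\epsilon L'$. Combining the two Stanley decompositions yields the desired one for $M$.

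For the \emph{in particular} statements I would use elementary bounds on $\spdim_\epsilon$ in terms of the number of atoms. The lattice $L_{\leq a}$ has exactly $\rk_L a$ atoms, so any ideal realizing it has $\rk_L a$ minimal generators. For $\epsilon = \Q$, the basic bound $\spdim_\Q L_{\leq a} \leq \rk_L a$ gives case (1), while in case (2) the same bound becomes strict because $L_{\leq a} \cong \bool{\rk_L a}$ occurs precisely when $a$ lies in the Scarf complex of $L$. For $\epsilon = \I$, the sharper estimate $\spdim_\I L_{\leq a} \leq \lfloor \rk_L a / 2 \rfloor$ (obtained from the Bir\'o--Howard--Keller--Trotter--Young type bounds on Stanley depth of ideals with $\rk_L a$ generators) yields $\spdim_\I L_{\leq a} < p$ whenever $\rk_L a < 2p$.

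The main obstacle will be the module-theoretic assembly in the core step: verifying that after splitting $M$ by signature, each of the two groups of pieces forms a genuine Stanley-decomposable submodule isomorphic to one attached to the smaller lcm lattice ($L_{\leq a}$ or $L'$), with the correct $\KK[Z]$-actions inherited from $M$. I expect this can be carried out within the lattice-theoretic framework for Stanley decompositions developed in \cite{lcm}, exploiting the fact that meet-irreducibility prevents any lattice relations from being destroyed when identifying $a$ with $a_+$ in the signature.
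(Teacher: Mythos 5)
There is a genuine gap in the module-theoretic core step, and it is not a matter of tidying the assembly.

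First, for $M = S/I$ the signature split degenerates: every monomial outside $I$ has trivial signature $\hat{0}_L$, so the whole quotient falls into the first class and nothing is decomposed. Second, for $M = I$ the first class equals $I \setminus J$ with $J := (g_b \with b \text{ atom},\ b \not\leq a)$; this is not a submodule of $I$, and as a $\ZZ^n$-graded vector space it is generally \emph{not} the underlying space of any ideal with lcm lattice $L_{\leq a}$. A small test case already shows the bound your sketch would produce is false: take $I = (x, y^2) \subset \KK[x,y]$ and let $a$ be the atom corresponding to $x$. Both $L_{\leq a}$ and the atomistic part of $L \setminus \set{a}$ are two-element chains, so $\spdim_\I L_{\leq a} = \spdim_\I(L\setminus\set{a}) = 0$; your split would then yield $\spdim I \leq 0$, whereas in fact $\spdim I = 1$. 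The culprit is precisely the first piece $\set{x^i y^j \with i \geq 1,\ j \leq 1}$, whose Stanley depth over $\KK[x,y]$ is $1$ (the bounded $y$-degree forces $y$ out of every Stanley space), hence Stanley projective dimension $1$, not $0$. The second piece also fails to realize $L\setminus\set{a}$ in general: its lcm lattice is only the sub-join-semilattice generated by the atoms $\not\leq a$ (e.g.\ for $I=(x,y,z)$ and $a$ a rank-$2$ element it is a two-element chain while $L\setminus\set{a}$ has seven elements).

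The paper proceeds differently. It realizes $L$ by a specific squarefree ideal $I$ with generators $\prod_{e \ngeq b} x_e$ as $b$ runs over the atoms, the index $e$ ranging over the meet-irreducibles of $L$ together with the covers of $a$, and then splits by divisibility by the distinguished variable $x_a$ rather than by signature. This gives $I = I' \oplus x_a(I : x_a)$ and $S/I = S'/I' \oplus x_a\, S/(I : x_a)$ as graded vector spaces, where $S' := \KK[x_e \with e \neq a]$ and $I' := I \cap S'$; by \cite[Thm.~3.4]{lcm} (or \cite[Thm.~3.2]{M}) one has $L_{I'} \cong L_{\leq a}$ and $L_{I:x_a} \cong L \setminus \set{a}$. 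Because $I'$ lives in a polynomial ring with one fewer variable, importing a Stanley decomposition of $I'$ into the split of $I$ or $S/I$ over $S$ raises the Stanley projective dimension by one; this unavoidable $+1$ is exactly why the conclusion is $\spdim_\epsilon L \leq \max\set{\spdim_\epsilon L_{\leq a} + 1,\ \spdim_\epsilon L\setminus\set{a}} \leq \max\set{p,\ \spdim_\epsilon L\setminus\set{a}}$ rather than the sharper $+1$-free bound your plan would deliver. Your derivation of the ``in particular'' clauses from $\spdim_\Q L_{\leq a} \leq \rk a$ (with equality iff boolean), and $\spdim_\I L_{\leq a} \leq \rk a / 2$, does match the paper.
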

\begin{proof}
	We construct a monomial ideal $I \subset S$, such that the lcm lattice of $I$ equals $L$ and the lcm lattice of $I:x$ equals $L \setminus \set{a}$ for a certain variable $x \in S$.
	Explicitly, let $E_1 \subset L$ be the set of meet-irreducible elements, let $E_2 \subset L$ be the set of elements covered by $a$ and set $E := E_1 \cup E_2$.
	Then we choose $S = \KK[x_e \with e \in E]$ and let $I \subset S$ be the ideal generated by 
	\[\prod_{\substack{e \in E\\e \ngeq b}} x_e, \]
	where $b$ runs over the atoms of $L$.
	Theorem 3.4 in \cite{lcm} resp. Theorem 3.2 in \cite{M} imply that $L_I \cong L$ and that $L_{I : x_a} \cong L \setminus \set{a}$.
	
	Let moreover $S' := \KK[x_e \with e \in E \setminus \set{a}] \subset S$ and let $I' := I \cap S'$.
	The lcm lattice of $I' := I \cap S'$ is just $L_{\leq a}$, as $I'$ is generated by all monomials in $I$ dividing the monomial $m \in L_I$ corresponding to $a \in L$.
	By considering the vector space decompositions $S/I = S'/I' \oplus x_a S/(I:x_a)$ and $I = I' \oplus x_a (I:x_a)$, one easily obtains then
	\[ \spdim_\epsilon L \leq \max\set{ \spdim_\epsilon L_{\leq a} + 1, \spdim_\epsilon L \setminus \set{a}}, \]
	and the first part of the lemma follows.

	For the `in particular' part, by \cite[Proposition 5.2]{lcm} we have that $\spdim_\Q L_{\leq a} \leq \rk a$ and $\spdim_\I L_{\leq a} \leq \frac{\rk a}{2}$ for all $a \in L$.
	Moreover, equality for $\spdim_\Q L$ holds if and only if $L_{\leq a}$ is a boolean lattice (cf. \cite[Theorem 5.3]{lcm}), i.e. if and only if $a$ is in the Scarf complex of $L$.
\end{proof}
The assumption that $L$ is atomistic is not essential. For a non-atomistic lattice, one can consider a set of monomials $G$ with $L = L_G$ instead of an ideal $I$.
Finally, we observe that we need to consider only one inequality for the computation of the Stanley projective dimension:
\begin{proposition}
	For every maximal lattice $L$ it holds that \[\spdim_\Q L \geq \pdim_\Q L.\]
	In particular, if the Stanley conjecture is true then $\spdim_\Q L = \pdim_\Q L$.
\end{proposition}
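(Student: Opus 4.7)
The plan is to exhibit, for any maximal lattice $L$ with $p := \pdim_\Q L \geq 1$, an element $a \in L$ such that the interval $L_{\leq a}$ is isomorphic to the boolean lattice $\bool{p}$. Then the inclusion $L_{\leq a} \hookrightarrow L$ is an injective meet-preserving map whose image contains $\hat{0}_L$, so the first proposition of this section will give $\spdim_\Q L \geq \spdim_\Q L_{\leq a}$, and since $\spdim_\Q \bool{p} = p$ the desired inequality follows at once. The case $p = 0$ is trivial, as then $L$ is the one-point lattice and both invariants vanish.

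For the first step I would invoke \Cref{thm:extremal}: the Scarf complex $\Delta$ of $L$ is a $(p-1)$-dimensional stoss complex, so in particular it has at least one facet of dimension $p-1$. This facet corresponds to an element $a \in L$ of rank $p$ lying in the Scarf complex. By the characterisation of the Scarf complex via the embedding $L \subseteq \bool{k}$ given in Section~3, every subset of the set of atoms below $a$ belongs to $L$; equivalently, $L_{\leq a} \cong \bool{p}$.

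For the second step I would invoke the equality case of the inequality $\spdim_\Q L_{\leq a} \leq \rk_L a$ recalled at the end of the proof of \Cref{lemma:reduction}: equality holds precisely when $L_{\leq a}$ is boolean. Hence $\spdim_\Q \bool{p} = p$. Concretely, $\bool{p}$ is realised by the lcm lattice of $(x_1,\dots,x_p) \subset \KK[x_1,\dots,x_p]$, whose quotient $\KK$ has Stanley depth $0$.

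Putting the two steps together yields $\spdim_\Q L \geq \spdim_\Q L_{\leq a} = p = \pdim_\Q L$. The ``in particular'' clause is then immediate, since in our notation the Stanley conjecture reads $\spdim_\Q L \leq \pdim_\Q L$. There is no real obstacle here: the proof is essentially a short assembly of \Cref{thm:extremal} (to produce a top-dimensional facet of the Scarf complex), the boolean equality case cited from \cite{lcm} (to compute $\spdim_\Q \bool{p}$), and the monotonicity proposition at the start of this section (to transfer the lower bound from $L_{\leq a}$ to $L$).
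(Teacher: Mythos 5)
Your proposal is correct and takes essentially the same route as the paper, which also picks an element $a$ of rank $p = \pdim_\Q L$ in the Scarf complex (whose existence is guaranteed by \Cref{thm:extremal}), observes that $L_{\leq a} \cong \bool{p}$, and concludes $\spdim_\Q L \geq \spdim_\Q L_{\leq a} = \spdim_\Q \bool{p} = p$; you have merely filled in the citations for the monotonicity step and for $\spdim_\Q \bool{p} = p$ that the paper leaves implicit.
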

\begin{proof}
	If $\pdim_\Q L = p$, then $L$ has an element $a$ of rank $p$ in its Scarf complex.
	Hence $\spdim_\Q L \geq \spdim_\Q L_{\leq a} = \spdim_\Q \bool{p} = p$.
\end{proof}

\begin{example}\label{ex:tree}
Let us illustrate the use of these lemmas by computing the Stanley projective dimension of a maximal lattice $L$ with projective dimension $2$.
This was done in \cite[Lemma 4.3]{KSF} by different methods.

If $\pdim_\Q L = 2$, then the Scarf complex $\Delta$ of $L$ is a $1$-dimensional stoss complex, i.e. a tree.
The elements of $L$ correspond to acyclic induced subcomplexes of $\Delta$, i.e. sets of vertices such that the induced subgraph is connected.
Now if $v$ is a leaf of $\Delta$, then it is easy to see that every element $a > v$ in $L$ has to be greater than the unique edge attached to $v$.
Hence $v$ is meet-irreducible. But $v$ has rank $1$, so $\spdim_\Q L \leq \max\set{2, \spdim_\Q L \setminus \set{v}}$ and $\spdim_\I L \leq \max\set{1, \spdim_\I L \setminus \set{v}}$.
This way we may remove every leaf of $\Delta$.
Further, we may restrict the resulting lattice to its atomistic sublattice.
This removes all edges to the former leaves, resulting in a smaller maximal lattice $L'$.
We iterate this procedure until we are left with a single edge, corresponding to an ideal generated by two indeterminates.
So we conclude that $\spdim_\Q L \leq 2$ and $\spdim_\Q L \leq 1$.
\end{example}

As another simple application, we give the following observation:
\begin{proposition}
	Assume that the Stanley conjecture does not hold.
	Let $I \subset S$ be an ideal with the minimal possible number of generators such that $\spdim S/I > \pdim S/I$.
	Then $\spdim S/I = \pdim S/I + 1$.
\end{proposition}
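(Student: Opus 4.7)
The plan is to argue by contradiction: assume $\spdim S/I \geq \pdim S/I + 2$ and produce a counterexample to the Stanley conjecture with strictly fewer generators, contradicting the minimality of $I$. Write $k$ for the number of generators and $p = \pdim S/I$.

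First, because the Stanley projective dimension depends only on the lcm lattice, the observation preceding \Cref{thm:main} lets me replace $I$ by an extremal ideal with the same number of generators and the same projective dimension; that is, I may assume $L_I \cong \exlat$ for some $(p-1)$-dimensional stoss complex $\Delta$ on $k$ vertices, and the inequality $\spdim_\Q L_I \geq p+2$ is preserved. Note $k > p$, since otherwise $\Delta$ is the full $(p-1)$-simplex and $\exlat = \bool{k}$, for which the Stanley conjecture holds trivially.

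Second, I would use the amalgamation decomposition provided by \Cref{prop:zerlegungstoss}. Assuming $\Delta$ has a vertex $v$ lying in exactly $\binom{k-2}{p-2}$ facets, part (2) of that proposition yields $\exlat = L_1 \# L_2$ with $L_1 = \exlat[\del_\Delta v]$ a maximal lattice on $k-1$ atoms, corresponding to some ideal $I'$ with $k-1$ generators. By the minimality of $I$ as a counterexample, the Stanley conjecture holds for $I'$, giving $\spdim_\Q L_1 \leq \pdim_\Q L_1$. Moreover, the inclusion $L_1 \subseteq L_I$ induces a surjective join-preserving map $L_I \twoheadrightarrow L_1$, so $\pdim_\Q L_1 \leq \pdim_\Q L_I = p$ by \Cref{thm:origin}. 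Now \Cref{lemma:amal} yields
\[\spdim_\Q L_I \leq \spdim_\Q L_1 + 1 \leq p + 1,\]
contradicting the standing assumption $\spdim S/I \geq p+2$.

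The main obstacle is that \Cref{prop:zerlegungstoss}(1) gives only a lower bound on vertex degrees, so the minimum $\binom{k-2}{p-2}$ is not automatically attained: ``balanced'' $(p-1)$-stoss complexes in which every vertex lies in strictly more facets can already be constructed (for example when $k=6$, $p=3$), and the corresponding maximal lattices admit no amalgamation decomposition. Handling such non-amalgamable cases would require either an alternative reduction — for instance, iterating \Cref{lemma:reduction} to strip away meet-irreducibles of rank at most $p+1$ — or a direct argument that a minimal counterexample cannot be of this balanced form. Ruling out, or otherwise dispatching, this balanced non-amalgamable case is the crux of the proof.
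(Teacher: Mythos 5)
You correctly flag the gap yourself, and it is fatal to the approach as written: \Cref{prop:zerlegungstoss}(1) only gives a lower bound on vertex degrees, and there are stoss complexes where it is never attained, so $\exlat$ need not be an amalgamation. The complex $P_2$ (the six-vertex triangulation of $\mathbb{RP}^2$, $k=6$, $p=3$) is exactly such an example: every vertex lies in five facets, while the amalgamation criterion demands one in $\binom{4}{1}=4$. No alternative reduction for the balanced case is supplied, so the argument does not close. (A separate, smaller remark: passing to an extremal ideal via the observation preceding \Cref{thm:main} is legitimate here, since by \Cref{thm:origin} the replacement can only increase $\spdim_\Q$, so the inequality $\spdim_\Q L \geq p+2$ and the generator count are preserved; but you should say this explicitly rather than invoke ``the Stanley projective dimension depends only on the lcm lattice,'' which by itself is not what is needed.)

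The hint you give in the last sentence --- iterating \Cref{lemma:reduction} --- is in fact the whole proof, and minimality of $I$ makes it much simpler than you anticipate; no rank restriction and no amalgamation are needed. By minimality, for \emph{every} $a \in L \setminus \set{\hat{1}_L}$ the lower interval $L_{\leq a}$ is the lcm lattice of an ideal with strictly fewer generators, hence satisfies the Stanley conjecture, giving $\spdim_\Q L_{\leq a} \leq \pdim_\Q L_{\leq a} \leq \pdim_\Q L$, where the last inequality follows from $L_{\leq a} \subseteq L$. So the hypothesis of \Cref{lemma:reduction} with $p := \pdim_\Q L + 1$ is met for \emph{any} meet-irreducible $a$, and the same holds for any lattice $L'' \subseteq L$ obtained along the way (since $L''_{\leq a} \subseteq L_{\leq a}$ only lowers $\spdim_\Q$). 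Iterating until only a tiny lattice remains yields $\spdim_\Q L \leq \pdim_\Q L + 1$ directly, with no case analysis. I would recommend reworking the proof along these lines; it is shorter and avoids the non-amalgamable case entirely.
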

\begin{proof}
	Let $L$ be the lcm lattice $L_I$ of $I$.
	By the choice of $I$, every lower interval $L_{\leq a}$ for $a \in L \setminus\set{\hat{1}}$ satisfies the Stanley conjecture.
	So we have that
	\[ \spdim_\Q  \beloweq{a}{L} \leq \pdim_\Q\beloweq{a}{L} \leq \pdim_\Q L \]
	for all $a \in L \setminus\set{\hat{1}}$.
	Hence iterating \Cref{lemma:reduction} yields that $\spdim_\Q L \leq \pdim_\Q L + 1$.
\end{proof}

\section{Applications}\label{sec:applications}
In this section, we give some additional applications of our results.

\subsection{The case \texorpdfstring{$k-2$}{k-2} and five generators}
\begin{theorem}\label{thm:k-2}
	Let $L$ be a maximal atomistic lattice with $k$ atoms and $\pdim_\Q L = k-2$.
	Then $\spdim_\Q L \leq k-2$ and $\spdim_\I L \leq k-1$.
\end{theorem}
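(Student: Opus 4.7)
The plan is to reduce via \Cref{thm:main} (together with \Cref{thm:extremal} and \Cref{thm:reconstruction}) to the case $L = \exlat$ for a $(k-3)$-dimensional stoss complex $\Delta$ on $k$ vertices, and then to apply \Cref{lemma:reduction} iteratively. By \Cref{prop:stree}(2), the Alexander dual $T := \Delta^\vee$ is a $1$-dimensional stoss complex, i.e.\ a tree on $k$ vertices, and the combinatorics of $T$ will control $L$.

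The argument proceeds by induction on $k$. The base case $k = 4$ follows from \Cref{ex:tree}, since $1$-dimensional stoss complexes are precisely trees. For $k \geq 5$, the key structural observation is: every induced $3$-vertex path $u-v-w$ in $T$ yields a meet-irreducible element $a = [k] \setminus \{u, v, w\}$ of rank $k-3$ in $L$, whose unique cover is $a \cup \{v\} = [k] \setminus \{u, w\}$. Indeed, among the three pairs in $\{u,v,w\}$ only $\{u, w\}$ is a non-edge of $T$ (since trees have no triangles), so by the description of facets of $\Delta = T^\vee$ as complements of non-edges of $T$, exactly one facet of $\Delta$ contains $a$, namely $a \cup \{v\}$. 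The only potential higher-rank cover $[k] \setminus \{v\}$ fails to lie in $L$, because $v$ has degree $\geq 2$ in $T$ and is therefore not a leaf (by the leaf characterization given in \Cref{prop:zerlegungstoss}). Since the $(k-4)$-skeleton of $\Delta$ is complete, $L_{\leq a} \cong \bool{k-3}$, so $\spdim_\Q L_{\leq a} = k - 3 < k - 2$.

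Any tree on $k \geq 3$ vertices has a vertex of degree $\geq 2$, so such an $a$ always exists and \Cref{lemma:reduction} yields $\spdim_\Q L \leq \max\{k-2, \spdim_\Q(L \setminus\{a\})\}$. We then iteratively peel all $P_3$-type rank-$(k-3)$ meet-irreducibles simultaneously: the peelings are compatible since each such $a$ has rank $k-3$ while its unique cover has rank $k-2$, so removing one does not destroy the meet-irreducibility of the others. To control the resulting lattice, we invoke \Cref{prop:zerlegungstoss} at a leaf $v$ of $T$ to write $L = L_1 \# L_2$ with $L_1 = \exlat[\del_\Delta v]$ and $L_2 \subseteq \exlat[\lk_\Delta v]$. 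The link $\lk_\Delta v$ is a $(k-4)$-dimensional stoss complex on $k-1$ vertices, so $\exlat[\lk_\Delta v]$ is a maximal lattice with $k-1$ atoms and projective dimension $(k-1)-2$ and falls within the inductive hypothesis, giving $\spdim_\Q L_2 \leq k-3$ (via the surjection of \Cref{prop:embedding}). Combining this with a direct peeling argument inside the disk-type factor $L_1$ (the Scarf complex of which is the boundary of a $(k-2)$-simplex minus one facet, amenable to the same rank-$(k-3)$ peeling technique) and with \Cref{lemma:amal}, one assembles the desired bound $\spdim_\Q L \leq k-2$.

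The ideal case $\spdim_\I L \leq k-1$ is easier: the last clause of \Cref{lemma:reduction} requires only $\rk a < 2p$, so with $p = k-1$ any meet-irreducible of rank at most $2k-3$ may be peeled. In particular, the rank-$(k-1)$ meet-irreducibles corresponding to leaves of $T$ are immediately available, and combined with the $P_3$-type peelings this allows an analogous but shorter chain of reductions. The main obstacle, in the quotient case, is bounding $\spdim_\Q L_1$ for the disk-type factor of the amalgamation, whose projective dimension equals $k-2$ and which lies outside the inductive hypothesis; one must either run a dedicated peeling inside $L_1$ or replace the amalgamation step altogether by a single exhaustive peeling of $L$ that bypasses the use of \Cref{lemma:amal}.
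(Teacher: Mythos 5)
There is a genuine gap in the quotient case, and you acknowledge it yourself at the end — the proposal is an honest sketch with the right tools but does not actually complete the proof. Moreover, the first alternative you suggest is a dead end: applying \Cref{lemma:amal} to the amalgamation $L = L_1 \# (L_1 \cap L_2)$ gives only $\spdim_\Q L \leq \spdim_\Q L_1 + 1$, and since $L_1$ is itself a maximal lattice of projective dimension $k-2$ on $k-1$ atoms, the expected value of $\spdim_\Q L_1$ is already $k-2$, so this route yields at best $\spdim_\Q L \leq k-1$, which is too weak. No amount of ``dedicated peeling inside $L_1$'' followed by \Cref{lemma:amal} on the pair $(L_1, L_1\cap L_2)$ can reach $k-2$.

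The paper's resolution of this obstacle is precisely what is missing. Rather than bound the factors of the amalgamation separately, the paper peels the elements of $L_1\setminus L_2$ out of $L$ itself, using \Cref{lemma:reduction} with $p = k-2$, until the lattice $L_2 \# L_2$ (a meet-subsemilattice of $L$) is reached; only then is \Cref{lemma:amal} applied, to the small lattice $L_2$, which by induction satisfies $\spdim_\Q L_2 \leq k-3$ and hence $\spdim_\Q L_2 \# L_2 \leq k-2$. The elements being removed split into two rank classes: rank-$(k-3)$ elements corresponding to complements of edges of the tree $T$ (the Alexander dual of the Scarf complex of $L_2$), and rank-$(k-2)$ elements corresponding to complements of non-leaf vertices of $T$. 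To ensure at each stage that the element to be removed is meet-irreducible in the current lattice, a specific order is used: edges adjacent to $w$ first, then non-leaf vertices adjacent to removed edges, then new edges, and so on, alternating in waves outward from $w$. In the edge step the rank is $k-3 < p$ so criterion (1) of \Cref{lemma:reduction} applies; in the vertex step the rank equals $p$, but an edge below was removed earlier so criterion (2) applies, and meet-irreducibility holds because the only remaining cover is $\hat{1}_{L_1}$. Your $P_3$-peeling observation is correct as far as it goes (every induced $P_3$ gives a meet-irreducible element of rank $k-3$), but it is never woven into a terminating reduction and does not by itself see the rank-$(k-2)$ vertex elements, which must also be removed in the correct order. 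You should adopt the paper's global peeling order of $L_1\setminus L_2$ in place of the ``amalgamate then bound $L_1$'' strategy.
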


\begin{corollary}\label{cor:k-2}
	Let $I \subset S$ be a monomial ideal with $k$ minimal generators.
	If $\pdim S/I \in \set{1,2, k-2,k-1,k}$, then the Stanley conjecture holds for $S/I$ and $I$.
\end{corollary}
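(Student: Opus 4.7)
The plan is to invoke \Cref{thm:main} to reduce the corollary to the case where $L_I \cong \exlat$ is a maximal lattice, and then to verify the Stanley conjecture separately for each of the five listed values of $p := \pdim S/I$ using the tools already at our disposal. Throughout let $k$ denote the number of minimal generators of $I$.

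The extreme values $p = 1$ and $p = k$ I will dispose of directly. For $p = 1$, the Taylor syzygies force $k = 1$, so $I$ is principal and the Stanley conjecture is trivial for both $S/I$ and $I$. For $p = k$, \Cref{thm:extremal} combined with \Cref{prop:stree}(4) forces the Scarf complex of $L$ to be the full $(k-1)$-simplex, whence $L \cong \bool{k}$; the minimal realization is $I = (x_1, \dots, x_k) \subseteq \KK[x_1, \dots, x_k]$, where $S/I \cong \KK$ satisfies $\sdepth = \depth = 0$, and $\sdepth I = \lceil k/2 \rceil \geq 1 = \depth I$ by the formula of Bir\'o, Howard, Jahangir, Keller and Shelton. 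The cases $p = 2$ and $p = k-2$ are precisely the contents of \Cref{ex:tree} and \Cref{thm:k-2}, whose bounds on $\spdim_\Q L$ and $\spdim_\I L$ yield the Stanley conjecture for both $S/I$ and $I$.

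The only case requiring additional work is $p = k-1$, which I will handle by induction on $k$; the base case $k = 3$ is subsumed under $p = 2$. By \Cref{thm:extremal} and \Cref{prop:stree}(4), the Scarf complex of $L$ has $k-1$ facets together with a complete $(k-3)$-skeleton, and a direct inspection of $\exlat$ identifies $L$ with $\bool{k} \setminus \set{V}$, where $V$ is the unique missing $(k-1)$-subset of $[k]$. Applying \Cref{prop:zerlegungstoss} at any vertex $v \in [k] \setminus V$ decomposes $L$ as $L_1 \# L_2$ with $L_1 \cong \bool{k-1}$ and $L_2$ a maximal lattice on $k-1$ atoms of projective dimension $k-2$; thus $L_2$ is again of the shape ``$\bool{k-1}$ minus a coatom'' and the inductive hypothesis applies to it. Unfortunately, \Cref{lemma:amal} alone only yields $\spdim_\Q L \leq \spdim_\Q L_1 + 1 = k$, which is one unit short of the required bound $k-1$. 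To close this gap I will apply \Cref{lemma:reduction} iteratively to the $(k-2)$-element subsets of $V$: each such $W$ is meet-irreducible in $L$ (its second upper cover in $\bool{k}$ was $V$, which has been deleted), lies in the Scarf complex, and has rank $k - 2 < p$, so \Cref{lemma:reduction} is indeed applicable. Successive removal of these $W$'s reduces $L$ to a smaller sublattice which, together with the inductive hypothesis on $L_2$ and the amalgamation decomposition, yields $\spdim_\Q L \leq k-1$; the corresponding bound $\spdim_\I L \leq k-2$ is obtained in parallel, exploiting the more permissive condition $\rk a < 2p$ available for $\epsilon = \I$.

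The technical heart of the argument is therefore the case $p = k-1$: the amalgamation bound is one unit too weak and must be supplemented by a careful iteration of \Cref{lemma:reduction} on the many meet-irreducible rank-$(k-2)$ elements that arise automatically from removing a single coatom from the boolean lattice. Verifying that the residual lattice after these successive reductions can be genuinely fed into the inductive framework will be the principal obstacle, and the rest of the proof is essentially bookkeeping on top of the previously established results.
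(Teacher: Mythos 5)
Your proof takes a genuinely different route from the paper's. The paper dispatches $\pdim S/I \in \set{1,2,k-1,k}$ by citing external results (\cite[Cor.\ 2.3]{HJY}, \cite[Lemma 4.3]{KSF}, and \cite[Thm.\ 5.3, Prop.\ 5.2]{lcm}) and only uses \Cref{thm:k-2} for the case $k-2$, whereas you aim for a self-contained argument built entirely on the machinery developed in this paper. Your handling of $p=1$ and $p=k$ is a pair of correct elementary observations, and invoking \Cref{ex:tree} for $p=2$ and \Cref{thm:k-2} for $p=k-2$ is sound. What this buys is independence from the cited literature; what it costs is that you must actually carry out the reduction for $p=k-1$, which is where the one real problem lies.

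In the $p=k-1$ case, you correctly identify $L=\bool{k}\setminus\set{V}$ with $V$ a $(k-1)$-subset, but you then apply \Cref{prop:zerlegungstoss} at a vertex $v\in[k]\setminus V$. There is only one such vertex, and it lies in \emph{all} $k-1$ facets of $\Delta$ (the maximum), not the minimum $\binom{k-2}{p-2}=k-2$, so \Cref{prop:zerlegungstoss} does not give an amalgamation there; indeed $\del_\Delta v = \Delta|_V$ is then the boundary of the $(k-2)$-simplex on $V$, a $(k-3)$-sphere, hence not acyclic and not a stoss complex, and the resulting ``$L_2$'' would strictly contain ``$L_1$'', which is disallowed. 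You must take $v\in V$: then $v$ lies in exactly $k-2$ facets, $\del_\Delta v$ is the full simplex on $[k]\setminus\set{v}$ so $L_1=\bool{k-1}$, and $\lk_\Delta v$ is the boundary of that simplex minus the facet $V\setminus\set{v}$, so $L_2=\bool{k-1}\setminus\set{V\setminus\set{v}}$. Your stated description of $L_1$ and $L_2$ actually matches this corrected choice, so this looks like a slip rather than a conceptual error, but the step as written fails. With $v\in V$, the remainder of your plan goes through and can be streamlined: $L\setminus(L_2\# L_2)$ consists of the \emph{single} element $V\setminus\set{v}$, which is meet-irreducible of rank $k-2$ and lies in the Scarf complex, so one application of \Cref{lemma:reduction} (with its parameter $p$ taken to be $k-1$ for $\epsilon=\Q$ and $k-2$ for $\epsilon=\I$) reduces $L$ to $L_2\#L_2$, after which \Cref{lemma:amal} and the inductive bound on $L_2$ give $\spdim_\Q L\leq k-1$ and $\spdim_\I L\leq k-2$. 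Removing all $k-1$ of the $(k-2)$-subsets of $V$ as you propose is also legitimate (each is meet-irreducible and stays so as the others are deleted), but it lands you strictly inside $L_2\#L_2$, and you would then need the monotonicity of $\spdim$ under meet-preserving inclusions (the unlabeled proposition following \Cref{lemma:surjinj}) rather than \Cref{lemma:amal} directly.
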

\begin{proof}
	The cases $\pdim S/I = 1,2,k-1$ or $k$ are treated in \cite[Corollary 2.3]{HJY}, \cite[Lemma 4.3]{KSF}, \cite[Theorem 5.3]{lcm} and \cite[Proposition 5.2]{lcm}, respectively.
	The remaining case $k-2$ is immediate from the preceding theorem.
\end{proof}

\begin{corollary}\label{cor:five}
	Stanley conjecture holds for $S/I$ and $I$ for every monomial ideal $I$ with at most $5$ generators.
\end{corollary}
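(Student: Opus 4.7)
The plan is to simply unpack Corollary~\ref{cor:k-2} and observe that for $k \leq 5$, every possible value of $\pdim S/I$ already falls into the set $\{1, 2, k-2, k-1, k\}$ covered by that corollary. So the proof is a short case analysis in $k$ rather than any new argument.

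First I would recall that if $I \subset S$ is a monomial ideal with $k$ minimal generators, then the projective dimension $p := \pdim S/I$ satisfies $1 \leq p \leq k$ (the lower bound because $I$ is nonzero, the upper bound for instance from the Taylor resolution, which has length $k$). So it suffices to verify that for each $k \in \{1,2,3,4,5\}$, the set $\{1,2,\ldots,k\}$ is contained in $\{1, 2, k-2, k-1, k\}$.

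For $k = 1, 2, 3$ this is immediate: $\{1,\ldots,k\} \subseteq \{1,2,k\} \subseteq \{1,2,k-2,k-1,k\}$. For $k = 4$ we have $\{1,2,3,4\} = \{1, 2, 2, 3, 4\} = \{1, 2, k-2, k-1, k\}$, and for $k = 5$ we have $\{1,2,3,4,5\} = \{1, 2, 3, 4, 5\} = \{1, 2, k-2, k-1, k\}$. In every case $\pdim S/I$ lies in the range handled by Corollary~\ref{cor:k-2}, so the Stanley conjecture holds for both $S/I$ and $I$.

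There is no real obstacle here; the entire content of the corollary is that the various results assembled in Corollary~\ref{cor:k-2} (namely the cases $p \in \{1,2\}$ from \cite{HJY,KSF}, the cases $p \in \{k-1,k\}$ from \cite{lcm}, and the new case $p = k-2$ from Theorem~\ref{thm:k-2}) together cover all projective dimensions that can occur when $k \leq 5$. The only thing one should be a little careful about is the edge cases $k \in \{1,2,3\}$, where $k - 2$ may be non-positive or coincide with values already in $\{1,2\}$; these cause no issue since the listed set still contains all possible values of $p$.
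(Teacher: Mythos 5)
Your argument is correct and is exactly the intended reasoning: in the paper, \Cref{cor:five} is stated without a separate proof as an immediate consequence of \Cref{cor:k-2}, and your case analysis on $k\le 5$ spells out precisely why the covered projective dimensions $\{1,2,k-2,k-1,k\}$ exhaust $\{1,\dots,k\}$ in that range.
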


\begin{proof}[Proof of \Cref{thm:k-2}]
The Scarf complex $\Delta$ of $L$ is a stoss complex on $k$ vertices of dimension $k-3$.
So its Alexander dual $\Delta^\vee$ is a stoss complex of dimension $1$, i.e. a tree $\tilde{T}$.
We can choose a leaf $v$ of $\tilde{T}$, i.e. a vertex with only one edge $e$ attached to it.
Then $v$ satisfies the condition of \Cref{prop:zerlegungstoss} for $\Delta^\vee$ and thus also for $\Delta$.
Hence $L$ decomposes as $L = L_1 \# (L_1 \cap L_2)$, where
$L_1$ and $L_2$ are maximal lattices of projective dimension $k-2$ and $k-3$ on $k-1$ atoms.
By the description of the decomposition in \Cref{prop:zerlegungstoss}, we see that the Scarf complex of $L_2$ is again Alexander dual to a tree $T$, and that $T$ is obtained from $\tilde{T}$ by deleting $v$ and $e$.
Moreover, $L_1 = \bool{k-1} \setminus {w}$, where the element $w$ is the complement of the link of $v$ in $\tilde{T}$, i.e. the complement of the \emph{other} vertex of $e$.

Our strategy is to repeatedly remove elements of $L$ using \Cref{lemma:reduction} with $p = k-2$, until we are left with $L_2 \# L_2$.
Then it follows that 
\[\spdim_\epsilon L \leq \spdim_\epsilon L_2 \# L_2 \leq \spdim_\epsilon L_2 + 1 = \begin{cases}
(k-1)-2+1 &\text{ if }\epsilon = \Q,\\
(k-1)-1+1 &\text{ if }\epsilon = \I\\
\end{cases}\]
where we use \Cref{lemma:amal} and induction on $k$.

So consider the set $(L_1\# (L_1 \cap L_2)) \setminus (L_2 \# L_2) = L_1 \setminus L_2$.
It contains only elements of rank $k-3$ and $k-2$.
The elements of rank $k-3$ correspond to minimal non-faces of the Scarf complex of $L_2$, i.e. complements of edges of $T$.
On the other hand, elements of rank $k-2$ correspond to non-acyclic subcomplexes of the Scarf complex, i.e. complements of vertices of $T$, whose link is not acyclic, i.e. complements of non-leaf vertices of $T$.
In the sequel we will identify the elements of $L_1 \setminus L_2$ with the set of edges and non-leaf vertices of $T$.
We will remove the elements of $L_1 \setminus L_2$ from $L$ in the following order: 
First we remove all edges adjacent to $w$, then all non-leaf vertices adjacent to the previously removed edges,
then again all edges adjacent to previously removed vertices, and so on.
It is clear that after finitely many steps we reach $L_2\# L_2$.
Let us consider the steps more closely:
\begin{asparadesc}
\item[Edge step] The edge elements have rank $k-3$, so criterion (1) of \Cref{lemma:reduction} is satisfied. Moreover, an edge element $e$ is covered in $\bool{k}$ by its two vertices and the copy of the edge in $L_2$. But one of the vertices was removed before, and the copy is not contained in $L_2$ (by assumption), so $e$ is covered by only one element and hence meet-irreducible.

\item[Vertex step] Every vertex $v$ has rank $k-2$, and an edge $e$ below $v$ was removed before, so  criterion (2) of \Cref{lemma:reduction} is satisfied. Moreover, $v$ is meet-irreducible, since it is covered only by the maximal element of $L_1$.
\end{asparadesc}
\end{proof}


\subsection{Six generators}
\begin{theorem}\label{thm:six}
If $I \subset S$ is a monomial ideal with six minimal generators, then $I$ and $S/I$ satisfy the Stanley conjecture.
\end{theorem}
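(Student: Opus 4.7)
The plan is to combine the reduction result \Cref{thm:main} with the structural tools from Sections~\ref{sec:amalgamation-of-lattices} and \ref{sec:tools-for-computing-the-stanley-projective-dimension}. By \Cref{thm:main}, we need only verify the Stanley conjecture for extremal ideals with six generators, i.e.\ for ideals $I$ with $L_I \cong \exlat$ where $\Delta$ is a $(p-1)$-dimensional stoss complex on six vertices. The projective dimensions $p \in \{1,2,4,5,6\}$ are already covered by \Cref{cor:k-2}, so the work reduces to the single case $p=3$. Concretely, I would enumerate the $2$-dimensional stoss complexes on six vertices (acyclic, with complete $1$-skeleton, hence with $\binom{5}{2}=10$ triangular facets) and show $\spdim_\Q \exlat \leq 3$ and $\spdim_\I \exlat \leq 4$ for each one.

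For each such $\Delta$, first I would look for a vertex $v$ attaining the minimum facet count $\binom{4}{1}=4$ guaranteed by \Cref{prop:zerlegungstoss}(1). When such a $v$ exists, the same proposition gives a decomposition
\[
    \exlat \;=\; L_1 \# (L_1 \cap L_2),
\]
where $L_1 = \exlat[\del_\Delta v]$ and $L_2 = \exlat[\lk_\Delta v]$ are maximal lattices on five vertices. By \Cref{cor:five} (i.e.\ \Cref{thm:k-2} for $k=5$) both $L_1$ and the meet-subsemilattice $L_1 \cap L_2$ satisfy the Stanley conjecture, and \Cref{lemma:amal} then yields $\spdim_\Q \exlat \leq \spdim_\Q L_1 + 1 \leq 3$ and $\spdim_\I \exlat \leq 4$, as required. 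By Alexander duality (\Cref{prop:zerlegungstoss}(3), and the self-duality observation following \Cref{prop:stree}), we may as well reduce if \emph{either} $\Delta$ or $\Delta^\vee$ has such a low-valence vertex.

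The main obstacle will be the stoss complexes $\Delta$ for which every vertex lies in strictly more than $\binom{4}{1}=4$ facets \emph{and} the same holds for $\Delta^\vee$: such $\Delta$ admit no amalgamation reduction of the above kind. By a direct enumeration (necessarily finite and, from the numerics reported in the introduction, rather small), I expect at most a handful of such exceptional self-dual (or nearly self-dual) complexes. For each surviving complex, the strategy is to apply \Cref{lemma:reduction} iteratively, removing meet-irreducible elements $a$ with either $\rk a < p=3$, or $\rk a = 3$ with $a$ outside the Scarf complex (for the ideal case, the looser bound $\rk a < 2p = 6$ gives much more room). In most cases repeated application will reduce $L$ to a lattice whose Stanley projective dimension is bounded directly.

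For the residual exceptional case (the paper signals that exactly one such obstinate complex remains), I would fall back on the computational approach mentioned in the introduction: reformulate the existence of a Stanley decomposition of $S/I$ with $\sdepth \geq \depth(S/I)$ as a system of linear Diophantine equations and inequalities in the indicator variables of candidate Stanley spaces, and verify feasibility using \texttt{SCIP}. The hardest part of the whole proof is thus twofold: enumerating the $2$-dimensional stoss complexes on six vertices finely enough to isolate the few that resist amalgamation reduction, and handling those exceptional lattices individually — either by a clever sequence of reductions via \Cref{lemma:reduction} or, if necessary, by an ILP-style computer verification.
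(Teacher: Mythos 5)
Your overall strategy matches the paper's — reduce to maximal lattices via \Cref{thm:main}, dispose of $p\neq 3$ via \Cref{cor:k-2}, split into amalgamations and non-amalgamations, and fall back on an ILP computation for the one obstinate complex (the $6$-vertex projective plane). But there is a real gap in the amalgamation step.

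You write that when $\exlat = L_1 \# (L_1 \cap L_2)$ with $L_1 = \exlat[\del_\Delta v]$, \Cref{lemma:amal} gives $\spdim_\Q \exlat \leq \spdim_\Q L_1 + 1 \leq 3$. That last inequality is false. Since $\del_\Delta v$ is a $2$-dimensional stoss complex on five vertices, $L_1$ is a \emph{maximal} lattice on five atoms of projective dimension $3$, and by the proposition at the end of Section~\ref{sec:tools-for-computing-the-stanley-projective-dimension} together with \Cref{cor:five}, $\spdim_\Q L_1 = \pdim_\Q L_1 = 3$. So \Cref{lemma:amal} only gives $\spdim_\Q \exlat \leq 4$, one short of the target $\pdim_\Q L = 3$. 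The same issue poisons your ideal-case bound: $\spdim_\I \exlat \leq 4$ is not what is needed (one must show $\spdim_\I L \leq \pdim I = \pdim S/I - 1 = 2$), and in any case $\spdim_\I L \leq \lfloor 6/2\rfloor = 3$ holds trivially, so a bound of $4$ says nothing. The paper does \emph{not} apply \Cref{lemma:amal} directly to the whole amalgamation; instead it uses \Cref{lemma:reduction} iteratively, exploiting that the Scarf complex of $L_2$ is a tree on five vertices. Concretely, it peels off leaves (rank-$1$ meet-irreducibles) and then the rank-$3$ edge elements rendered meet-irreducible after a leaf removal, inspecting the three trees on five vertices case by case, until the lattice drops to five atoms and \Cref{cor:five} applies. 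That iterative peeling is the idea your proposal is missing, and it is essential: the crude amalgamation inequality cannot close the gap.

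In the non-amalgamation part you are on the right track: every vertex must lie in $\geq 5$ facets, and Kalai's classification gives exactly four such $2$-dimensional stoss complexes on six vertices ($C_1,C_2,C_3,P_2$). For $C_1,C_2,C_3$ the paper again relies on explicit deletion sequences via \Cref{lemma:reduction} (it does not use amalgamation here either, since these complexes admit none), and for $P_2$ the ideal case follows from the $\rk a < 2p$ criterion of \Cref{lemma:reduction} while the quotient case is settled by the \texttt{SCIP} computation — in line with your fallback. So the structural outline is right, but the heart of the proof is a careful sequence of applications of \Cref{lemma:reduction}, and you need to replace the incorrect \Cref{lemma:amal} shortcut with that.
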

\begin{figure}
\newcommand{\crossnode}[1]{
\draw[draw=gray] (#1) +(-0.15,-0.15) -- +(0.15,0.15)  +(0.15,-0.15) -- +(-0.15,0.15);
}
\newcommand{\crossedge}[2]{
\crossnode{$(#1)!.5!(#2)$}
}
\begin{tikzpicture}[scale=1, every node/.style={circle,inner sep=0pt, fill=black,  minimum size=1.5mm, draw}]
	\begin{scope}
		\path
			node (1) at (0,0) {}
			node (2) at (0:1) {}
			node (3) at (90:1) {}
			node (4) at (180:1) {}
			node (5) at (270:1) {};
			
		\draw (1) -- (2);
		\draw (1) -- (3);
		\draw (1) -- (4);
		\draw (1) -- (5);
		
		\crossnode{2}
		\crossnode{3}
		\crossnode{4}
		\crossnode{5}
	\end{scope}
	
	\begin{scope}[xshift=3cm]
		\path(0,0)
			node (1) at (0,0) {}
			node (2) at (120:1) {}
			node (3) at (240:1) {}
			node (4) at (1,0) {}
			node (5) at (2,0) {};

		\draw (1) -- (2);
		\draw (1) -- (3);
		\draw (1) -- (4) --(5);
		
		\crossnode{2}
		\crossnode{3}
		\crossnode{4}
		\crossnode{5}

		\crossedge{4}{5}
	\end{scope}
	
	\begin{scope}[xshift=7cm]
		\path(0,0)
			node (1) at (0,0) {}
			node (2) at (1,0) {}
			node (3) at (2,0) {}
			node (4) at (3,0) {}
			node (5) at (4,0) {};
			
		\draw (1) --(2) --(3) -- (4) -- (5);
		
		\crossnode{1}
		\crossnode{2}
		\crossnode{4}
		\crossnode{5}

		\crossedge{1}{2}
		\crossedge{4}{5}
	\end{scope}
\end{tikzpicture}
\caption{The trees with five vertices.}
\label{fig:trees5}
\end{figure}
\begin{proof}
We only need to consider maximal atomistic lattices $L$ with six atoms.
Also, we may assume that $\KK = \QQ$.
Indeed, the Stanley projective dimension does not depend on the characteristic.
Moreover, recall that a simplicial complex is $\QQ$-acyclic if it is acyclic over any field.
Hence a $\KK$-stoss complex for any field is also a $\QQ$-stoss complex.
So if we compute the Stanley projective dimension for every $\QQ$-stoss complex, then we have in particular computed it for every $\KK$-stoss complex for any field $\KK$.
Moreover, we only need to consider the case $\pdim_\Q L = 3$ by \Cref{cor:k-2}.

First, assume that $L$ is an amalgamation $L_1\#(L_1 \cap L_2)$.
Then by \Cref{prop:zerlegungstoss} $L_1$ and $L_2$ are maximal lattices on five atoms of projective dimension $3$ and $2$.
As noted before, the Scarf complex of $L_2$ is a tree $T$ on five vertices.
As in \Cref{ex:tree}, we can use \Cref{lemma:reduction} to remove all leaves of $T$.
However, we cannot restrict the resulting lattice $L'$ to its atomistic sublattice,
because it might still be atomistic inside $L_1 \# L'$.
So consider a leaf $v$ of $T$ and let $w$ be its unique neighbor.
The edge $v\vee w$ is an element of rank $3$, which does not lie in the Scarf complex (after removing $v$),
so we may remove it if it is meet-irreducible.
However, in general this is not the case.
But if $w$ has degree $2$ in $T$ (i.e. $w$ has only one further neighbor, say $u$),
then every connected induced subcomplex of $T$ properly containing $\set{v,w}$ also contains $\set{w,u}$ and hence $v \vee w \vee u$ is the unique element covering $v \vee w$.
So in this case, we may remove $v \vee w$.
Moreover, this renders $w$ meet-irreducible, so we may remove it as well.

There are only three trees with five vertices, see \Cref{fig:trees5}.
We have crossed out the removed vertices and edges.
By direct inspection, one sees that in each case only one vertex survives.
Moreover, in every case this vertex is contained in every non-crossed connected induced subcomplex.
Hence the minimal element of $L_2$ inside $L$ is covered by only this vertex and is thus meet-irreducible.
So we may remove this element and obtain a lattice with only five atoms (and projective dimension at most $3$).
But we already know that every lattice on five atoms satisfies the Stanley conjecture, so we are done.

\begin{figure}
\begin{tikzpicture}[scale=1.5]
\begin{scope}[rotate=-90]
	\coordinate (m) at (0,0);
	\coordinate (v1) at (36:1);
	\coordinate (v2) at (108:1);
	\coordinate (v3) at (180:1);
	\coordinate (v4) at (252:1);
	\coordinate (v5) at (324:1);

	\filldraw[fill opacity=0.15,fill=gray!50] (v1)--(v2)--(v3)--(v4)--(v5)--cycle;
	\foreach \p in {(v1),(v2),(v3),(v4),(v5)}
		\draw (m) -- \p;

	\foreach \a/\b/\c in {(v1)/(v2)/(v3),(v2)/(v3)/(v4),(v3)/(v4)/(v5),(v4)/(v5)/(v1),(v5)/(v1)/(v2)}
		\filldraw[fill opacity=0.15,fill=gray!100] \a -- \b -- \c -- cycle;

	\foreach \p in {(m),(v1),(v2),(v3),(v4),(v5)}
		\draw[fill=black] \p circle (0.03);

	\path (m) node[anchor=north] {$1$};
	\path (v1) node[anchor=north] {$2$};
	\path (v2) node[anchor=west] {$3$};
	\path (v3) node[anchor=south] {$4$};
	\path (v4) node[anchor=east] {$5$};
	\path (v5) node[anchor=north] {$6$};
\end{scope}

\begin{scope}[xshift=3cm,scale=1]
	\coordinate (v1) at (0,0.3);
	\coordinate (v2) at (145:2);
	\coordinate (v3) at (90:0.7);
	\coordinate (v4) at (35:2);
	\coordinate (v5) at (300:1);
	\coordinate (v6) at (240:1);

	\filldraw[fill opacity=0.15,fill=gray!50] (v2)--(v4)--(v5)--(v6) -- cycle;

	\foreach \p in {(v2),(v3),(v4),(v5),(v6)}
		\draw (v1) -- \p;
	\draw (v2) -- (v3) -- (v4);

	\foreach \a in {(v2),(v4)}
		\filldraw[fill opacity=0.15,fill=gray!100] \a -- (v5) -- (v6) -- cycle;

	\filldraw[fill opacity=0.15,fill=gray!100] (v6) .. controls ($(v6)!0.2!15:(v4)$) and ($(v3)!0.6!-25:(v4)$)  .. (v3) -- (v4) -- cycle;
	\filldraw[fill opacity=0.15,fill=gray!100] (v5) .. controls ($(v5)!0.2!-15:(v2)$) and ($(v3)!0.6!25:(v2)$)  .. (v3) -- (v2) -- cycle;

	\foreach \p in {(v1),(v2),(v3),(v4),(v5),(v6)}
		\draw[fill=black] \p circle (0.03);

	\path (v1) node[anchor=north] {$1$};
	\path (v2) node[anchor=east] {$2$};
	\path (v3) node[anchor=south] {$3$};
	\path (v4) node[anchor=west] {$4$};
	\path (v5) node[anchor=north] {$5$};
	\path (v6) node[anchor=north] {$6$};
\end{scope}

\begin{scope}[xshift=6cm,scale=1,yshift=-0.2cm]
	\coordinate (v1) at (90:1.5);
	\coordinate (v2) at (210:1.5);
	\coordinate (v3) at (330:1.5);
	\coordinate (v4) at (90:0.5);
	\coordinate (v5) at (210:0.5);
	\coordinate (v6) at (330:0.5);

	\filldraw[fill opacity=0.15,fill=gray!50] (v1)--(v2)--(v3) -- cycle;

	\foreach \p in {(v1),(v3),(v5),(v6)}
		\draw (v4) -- \p;
	\foreach \p in {(v1),(v2),(v6)}
		\draw (v5) -- \p;
	\foreach \p in {(v2),(v3)}
		\draw (v6) -- \p;

	\foreach \a/\b/\c in {(v1)/(v3)/(v6),(v1)/(v2)/(v4),(v2)/(v3)/(v5)}
		\filldraw[fill opacity=0.15,fill=gray!100] \a -- \b -- \c -- cycle;

	\foreach \p in {(v1),(v2),(v3),(v4),(v5),(v6)}
		\draw[fill=black] \p circle (0.03);

	\path (v1) node[anchor=south] {$1$};
	\path (v2) node[anchor=east] {$2$};
	\path (v3) node[anchor=west] {$3$};
	\path (v4) node[anchor=west] {$4$};
	\path (v5) node[anchor=north] {$5$};
	\path (v6) node[anchor=north] {$6$};
\end{scope}
\end{tikzpicture}
\caption{The complexes $C_1, C_2$ and $C_3$ from the proof of \Cref{thm:six}.}
\label{fig:c1c2c3}
\end{figure}
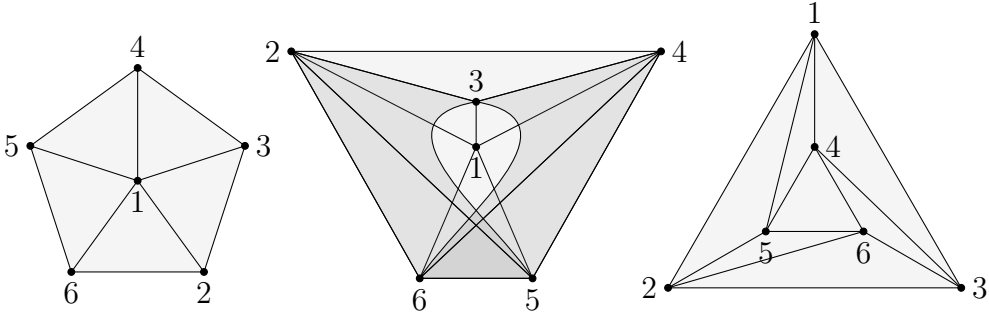
It remains to consider the case that $L$ is not the amalgamation of two smaller lattices. 
In this case, the Scarf complex of $L$ is a $2$-dimensional stoss complex on six vertices, such that every vertex is contained in at least $5$ triangles.
It has been determined by Kalai in \cite{kalai} that there are only four complexes of this type.
First, consider the three complexes named $C_1, C_2$ and $C_3$ in \cite[Table 1]{kalai}, see \Cref{fig:c1c2c3}.
For these complexes, one can bound the Stanley projective dimension using just \Cref{lemma:reduction} several times,
so the corresponding lattices satisfy the Stanley conjecture.
In the table below, we give the facets of the complexes and the elements of $\exlat \setminus \Delta$.
The column labeled \qq{deletion order} lists elements $a$, which are to be removed by \Cref{lemma:reduction} (in the given order).

\begin{tabular}{|c|x{3.5cm}|x{3.7cm}|x{3cm}|}
\hline \rule[-2ex]{0pt}{5.5ex} Name & Facets & $\exlat \setminus \Delta$ & Deletion order \tabularnewline 
\hline \rule[-2ex]{0pt}{5.5ex} $C_1$& $123\ 134\ 145\ 156\ 126$ $234\ 345\ 456\ 256\ 236$ & $1234\ 1345\ 1456$ $1562\ 1623\ 123456$ & $24\ 234\ 25\ 256$ $36\ 236\ 23\ 26\ 2$ \tabularnewline 
\hline \rule[-2ex]{0pt}{5.5ex} $C_2$& $123\ 134\ 145\ 156\ 126$ $234\ 235\ 256\ 346\ 456$ & $1234\ 1256\ 1456$ $123456$ & $35\ 235\ 36\ 346$ $24\ 234\ 23\ 34\ 3$ \tabularnewline 
\hline \rule[-2ex]{0pt}{5.5ex} $C_3$& $124\ 125\ 134\ 145\ 136$ $235\ 236\ 256\ 346\ 456$ & $1245\ 2356\ 1346$ $123456$ & $16\ 136\ 13\ 134\ 24$ $124\ 12\ 125\ 15\ 5$ \tabularnewline 
\hline 
\end{tabular}

Note that the rank assumption of \Cref{lemma:reduction} is satisfied automatically for the elements of rank $2$. Further, for each element of rank $3$ which is to be removed, a  rank $2$ element below it was removed earlier, so condition (2) of \Cref{lemma:reduction} applies.
It remains to check that every element to be removed is covered by exactly one other element. This can be done by inspection.
We stop once we removed an atom, because then we have reduced the situation to the case of five generators, so the claim follows from \Cref{cor:five}.

The last case of \cite[Table 1]{kalai} is $P_2$, which is the six-vertex triangulation of the real projective plane.
For $\spdim_\I L$, last line of \Cref{lemma:reduction} is still applicable to all elements of rank $3$, so it follows that $\spdim_\I L \leq 2$.
For $\spdim_\Q L$, we resort to a computational proof.
We verified that $\spdim_\Q \leq 3$ using the method described in \Cref{ssec:IP}.
The actual computation using \texttt{SCIP} took less than a second.
\end{proof}

\begin{remark}
\begin{enumerate}
	\item An ideal with a given lcm lattice can be constructed using Theorem 3.4 in \cite{lcm} or Theorem 3.2 in \cite{M}.
	In the case $P_2$ in the preceding proof, it turns out that one can choose the \emph{nearly Scarf ideal} \cite{PV}
	associated to the $6$-vertex projective plane. 
	In general, the extremal ideals associated to a stoss complex $\Delta$ are not the nearly Scarf ideals.
	Indeed, the lcm lattice of the latter equals $\Delta$ with an additional maximal element; while the lcm lattice of the former can contain many additional elements according to \Cref{thm:reconstruction}.
	
	\item The cautious reader might notice that the last facet of $C_3$ in \cite[Table 1]{kalai} is $356$ instead of $456$. This is indeed a small misprint in \cite{kalai}.
	
	\item The reason why we only give a computer proof for  the case $P_2$ is the following.
	Every edge of this complex is contained in two triangles and is thus not meet-irreducible, and every element of rank $3$ is in the Scarf complex.
	So we cannot apply \Cref{lemma:reduction} for $\spdim_\Q L$.
	We also tried to compute a Stanley decomposition using an implementation of the algorithm given in \cite{IZ}, but unfortunately this seems infeasible.
\end{enumerate}
\end{remark}

\subsubsection{Computing Stanley decompositions via linear Diophantine equations}\label{ssec:IP}
\newcommand{\supp}{\mathop{\mathrm{supp}}}
\newcommand{\af}{\mathbf{a}}
\newcommand{\bF}{\mathbf{b}}
\newcommand{\gf}{\mathbf{g}}
\newcommand{\Tf}{\mathbf{T}}
Let us for the moment return to a more general setup.
The following approach for computing the Stanley depth was suggested by Winfried Bruns.
Let $M$ be a finitely generated multigraded $S$-module.
Recall that the (multigraded) \emph{Hilbert series} of $M$ is the formal power series
\[
	H_M(\Tf) = \sum_{\mathbf{a} \in \NN^n} (\dim_\KK M_{\af}) \Tf^\mathbf{a}
\]
where $\Tf^\af = T_1^{a_1}\dotsm T_n^{a_n}$ as usual.
A \emph{Hilbert decomposition} is a finite family $(I_i, \af_i)_{i \in \mathcal{I}}$, where $I_i \subset \set{1,2,\dotsc,n}$ are subsets and $\mathbf{a}_i \in \NN^n$, such that
\begin{equation}\label{eq:hilbdec}
H_M(\Tf) = \sum_{i \in \mathcal{I}} \Tf^{\af_i} \prod_{j \in I_i} \frac{1}{1-T_j}.
\end{equation}
The \emph{Hilbert depth} of such a decomposition is the minimal cardinality of the $I_i$, 
and the Hilbert depth of $M$ is the maximum of the Hilbert depths over all Hilbert decompositions of $M$.

It follows from \cite[Proposition 2.8]{bku} that if $M = S/I$ or $M=I$, then the Stanley and Hilbert depth coincide.
So for the case we are interested in we only need to compute the Hilbert depth.

Let $\gf \in \NN^n$ be a multidegree such that $M$ is $\gf$-determined in the sense of \cite{miller}.
In particular, if $M = S/I$ or $M=I$, we may take the lcm of all minimal monomial generators of $I$ as $\gf$.
It has been shown in \cite{HVZ} that one only needs to consider Hilbert decomposition with $\af_i \preceq \gf$ (where $\preceq$ stands for the componentwise order) and that one only needs to check \eqref{eq:hilbdec} in the degrees $\preceq \gf$.
For $\bF \in \NN^n$, we set $\bF_\mathrm{max} := \set{j \with \bF_j = \gf_j}$ and $\supp \bF := \set{j \with \bF_j \neq 0}$.
It follows from \cite[Theorem 3.3]{IM} that every Hilbert decomposition satisfies $(\af_i)_\mathrm{max} \subseteq I_i$ for all $i$.

We make the following ansatz for \eqref{eq:hilbdec}:
\[
	\sum_{0 \preceq \bF \preceq \gf} \sum_{\substack{F \in \mathcal{F} \\ \bF_\mathrm{max} \subseteq F}} c_{F,\bF} \Tf^{\bF} \prod_{j \in F} \frac{1}{1-T_i}
\]
with unknown coefficients $c_{F,\bF}$. Here $\mathcal{F}$ is the set of possible \qq{building blocks} in the Hilbert decomposition.
So for showing that $M$ has Hilbert depth at least $h$, one can choose $\mathcal{F}$ to be the set of all subsets of $\set{1,\dotsc,n}$ of cardinality at least $h$.
Comparing coefficients with the Hilbert series of $M$ one obtains the following system of Diophantine equations and inequalities:
\begin{equation}
\begin{aligned}\label{eq:dio}
\dim_\KK M_\af &= \sum_{0 \preceq \bF \preceq \af} \sum_{\substack{F \in \mathcal{F} \\ \mathbf{b}_\textrm{max} \subseteq F \\
\supp (\af - \bF) \subseteq F}} c_{F,\bF} &\text{ for } 0 \preceq \af \preceq \gf \\
c_{F,\bF} &\geq 0 &\text{ for } F \in \mathcal{F}, \bF_\textrm{max} \subseteq F \\
c_{F,\bF} &\in \ZZ &\text{ for } F \in \mathcal{F}, \bF_\textrm{max} \subseteq F
\end{aligned}
\end{equation}
So for proving that $M$ has Hilbert depth at least $h$, one needs to show that the system \eqref{eq:dio} has a solution.
We implemented this in \texttt{SCIP} \cite{SCIP}.
\begin{remark}
	Let us mention some reductions to make the problem computationally easier.
	By \cite[Theorem 12]{IZ}, one only needs to consider sets of cardinality \emph{exactly} $h$ for $\mathcal{F}$.
	Moreover, one clearly has $c_{F, \bF} = 0$ for all $F$ if $M_\bF = 0$. 
\end{remark}

\subsection{Seven generators}
The situation is even more complicated if we consider ideals with seven generators.
For ideals, we were able to obtain a complete answer:
\begin{theorem}\label{thm:sevenI}
If $I \subset S$ is a monomial ideal with seven minimal generators. Then $I$ satisfies the Stanley conjecture.
\end{theorem}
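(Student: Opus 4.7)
The plan is to combine the reduction of \Cref{thm:main} with the ideal-friendly strengthening of \Cref{lemma:reduction}, and carry out a case analysis parallel to the proof of \Cref{thm:six}. By \Cref{thm:main} it suffices to prove $\spdim_\I L \leq \pdim_\Q L$ for every maximal lattice $L = \exlat$ with $k = 7$ atoms. By \Cref{cor:k-2} only $\pdim_\Q L \in \{3,4\}$ remains, so the associated stoss complex $\Delta$ has dimension $2$ or $3$. The decisive observation is that for $\spdim_\I$ the hypothesis of \Cref{lemma:reduction} relaxes to $\rk a < 2p$: when $p=4$ this condition is automatically satisfied by every element of $L$ (since the rank is bounded by $7 < 8$), and when $p=3$ it is satisfied whenever $\rk a \leq 5$, which leaves only the elements of rank $6$ and $7$ as potentially obstructive.

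First handle amalgamations: if $L = L_1 \# (L_1 \cap L_2)$ via \Cref{prop:zerlegungstoss}, then \Cref{lemma:amal} gives $\spdim_\I L \leq \spdim_\I L_1 + 1$, and since $L_1$ is a maximal lattice on six atoms, \Cref{thm:six} (in its $\spdim_\I$ form) applies and yields the desired bound. For the irreducible (non-amalgamation) cases, the approach is to enumerate the corresponding stoss complexes by computer and, for each one, produce a sequence of elements of $L$ that can be peeled off by iterated applications of \Cref{lemma:reduction}, ending in a lattice on at most six atoms to which \Cref{thm:six} again applies. At each step one only needs to check that the chosen element is meet-irreducible in the current lattice and satisfies the (mild) rank condition. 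For $p = 4$ the rank condition is vacuous, so the only task is to arrange the removals so that meet-irreducibility is maintained along the way.

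The main obstacle is entirely combinatorial and computational: the number of irreducible stoss complexes on $7$ vertices is substantially larger than on $6$ vertices, and a valid deletion order must be found and verified for every one of them. The case $\pdim_\Q L = 3$ is where genuine trouble is most likely to arise, since rank-$6$ and rank-$7$ elements in the Scarf complex are not directly removable by the rank criterion; for such elements one must either verify the stronger hypothesis $\spdim_\I L_{\leq a} < 3$ by recursion on a smaller interval, or fall back on the integer-programming reformulation of \Cref{ssec:IP}, which certifies $\spdim_\I L \leq p$ by exhibiting a Hilbert decomposition of sufficiently large Hilbert depth. Assembling these pieces yields $\spdim_\I L \leq \pdim_\Q L$ for every maximal lattice on seven atoms, and hence the Stanley conjecture for $I$ via \Cref{thm:main}.
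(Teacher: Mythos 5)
Your overall skeleton (reduce via \Cref{thm:main} and \Cref{cor:k-2}, split into amalgamation and non-amalgamation stoss complexes, peel with \Cref{lemma:reduction}, enumerate by computer) matches the paper's proof of \Cref{thm:six} and part of \Cref{thm:sevenI}. However, there is a systematic off-by-one error that undermines several steps.

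The Stanley conjecture for $I$ reads $\spdim I \leq \pdim I$, and $\pdim I = \pdim S/I - 1 = \pdim_{\Q} L - 1$. So the target is $\spdim_{\I} L \leq \pdim_{\Q} L - 1$, not $\spdim_{\I} L \leq \pdim_{\Q} L$ as you state. This shift propagates. In your amalgamation step, \Cref{lemma:amal} gives $\spdim_{\I} L \leq \spdim_{\I} L_1 + 1$, and \Cref{thm:six} gives $\spdim_{\I} L_1 \leq \pdim_{\Q} L_1 - 1 = \pdim_{\Q} L - 1$; combining yields only $\spdim_{\I} L \leq \pdim_{\Q} L$, which is exactly one too weak. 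This is precisely why the paper does \emph{not} use \Cref{lemma:amal} as a shortcut for the amalgamation cases of $\pdim_{\Q} L = 3$: all $50651$ of them are run through \Cref{lemma:reduction} directly. Similarly, your use of the $\rk a < 2p$ criterion with $p = \pdim_{\Q} L \in \{3,4\}$ should instead be with $p = \pdim I \in \{2,3\}$; as stated, iterating \Cref{lemma:reduction} would certify only the too-weak bound.

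You also miss the paper's one-line dispatch of the case $\pdim_{\Q} L = 4$: since $\spdim I \leq \lfloor k/2 \rfloor = 3$ always holds for $k = 7$ generators (\cite[Prop. 5.2]{lcm}), and $\pdim I = 3$ here, the Stanley conjecture for $I$ is automatic and needs no enumeration at all. With that observation the only computational work is for $\pdim_{\Q} L = 3$, where the paper enumerates $50651$ amalgamation complexes plus $9726$ irreducible ones and verifies $\spdim_{\I} L \leq 2$ for each via \Cref{lemma:reduction}.
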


On the other hand, for quotients of ideals, we only get the following partial result:
\begin{proposition}\label{prop:sevenP}
There is an explicit list of $211$ monomial ideals with seven generators with the following property:
If $\spdim S/I \leq \pdim S/I$ for all ideals in this list, then the Stanley conjecture holds for all quotients by ideals with up to seven generators.

Moreover, the Stanley conjecture holds unconditionally for all quotients by ideals with up to seven generators in characteristic $2$.
\end{proposition}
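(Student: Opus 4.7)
The plan is to combine \Cref{thm:main} with the ranges already handled by \Cref{cor:k-2}: for $k=7$ generators only the projective dimensions $p=3$ and $p=4$ remain, and by \Cref{prop:stree}(2) these two cases are exchanged by Alexander duality on the Scarf complex. So, after reducing to extremal ideals, I only need to enumerate the $2$-dimensional $\KK$-stoss complexes on seven vertices (equivalently, the $3$-dimensional ones) and process them one by one.

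For each such stoss complex $\Delta$, the first move is to test the amalgamation criterion of \Cref{prop:zerlegungstoss}: if some vertex $v$ of $\Delta$ lies in the minimum possible number $\binom{5}{p-2}$ of facets, then $\exlat$ splits as $L_1 \# (L_1 \cap L_2)$ with $L_1,L_2$ maximal lattices on six atoms. Using \Cref{lemma:amal} together with the six-generator theorem \Cref{thm:six}, this forces $\spdim_\Q \exlat \leq \spdim_\Q L_1 + 1 \leq \pdim_\Q L_1 + 1 = \pdim_\Q \exlat$, and the conjecture for $S/I$ follows; an analogous bound holds for $I$. In the remaining cases every vertex of $\Delta$ is contained in strictly more than the minimum number of facets, and I would attack these by iterating \Cref{lemma:reduction}: at each step locate a meet-irreducible element $a \in \exlat$ with $\rk a < p$, or with $\rk a = p$ and $a$ outside the Scarf complex, and delete it. Every such deletion either reduces the number of atoms (landing us in a situation already settled by \Cref{cor:five,thm:six}) or strictly simplifies the lattice.

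The $211$ ideals listed in the statement are precisely the extremal ideals whose stoss complex is non-amalgamated in the sense of \Cref{prop:zerlegungstoss} and for which the iteration above halts prematurely without descending to a smaller configuration. For $M = I$ rather than $M = S/I$ the last clause of \Cref{lemma:reduction} permits removing meet-irreducible elements of rank up to $2p-1$, which is a substantially weaker requirement; I would verify by direct enumeration that this more generous reduction disposes of every residual complex on seven vertices, which gives \Cref{thm:sevenI} unconditionally. For the quotient statement in characteristic $2$, the enumeration genuinely shrinks: a $\KK$-stoss complex must be acyclic over $\KK$, so any complex whose integral homology carries $2$-torsion — the seven-vertex analogues of the $\mathbb{RP}^2$ phenomenon that already forced a computer argument in the six-generator case — simply is not stoss over a field of characteristic $2$ and drops out of the list. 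The residual exceptional ideals that remain can then be dispatched one by one using the integer-programming formulation of \Cref{ssec:IP} implemented in \texttt{SCIP}.

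The main obstacle is purely computational. A complete and certifiably correct enumeration of stoss complexes on seven vertices (distinguishing those that are stoss over $\mathbb{Q}$ from those that are stoss over $\mathbb{F}_2$), together with an automated per-case execution of the amalgamation test, the \Cref{lemma:reduction} loop, and finally a \texttt{SCIP} call for every surviving ideal, is unavoidable; the real labor lies in organizing this pipeline so that the list of $211$ is reproducible and in confirming that each \texttt{SCIP} instance terminates with a valid Hilbert decomposition in characteristic $2$.
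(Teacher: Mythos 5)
Your main structural moves — reduce to $p=3,4$, exploit Alexander duality between the two, enumerate stoss complexes, treat amalgamations and non-amalgamations separately, and invoke torsion/acyclicity considerations to get the characteristic-$2$ clause — are the right skeleton and match the paper. But there are two concrete errors in the details.

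The amalgamation bound does not close the amalgamation case. If $v$ is a vertex of $\Delta$ contained in the minimum number of facets, \Cref{prop:zerlegungstoss} gives $\exlat = L_1 \# L_2$ with $L_1 = \exlat[\del_\Delta v]$, and $\del_\Delta v$ is again a $(p-1)$-dimensional stoss complex (now on six vertices). So $\pdim_\Q L_1 = p$, \emph{not} $p-1$, and \Cref{lemma:amal} plus \Cref{thm:six} only gives
\[
\spdim_\Q \exlat \ \le\ \spdim_\Q L_1 + 1 \ \le\ \pdim_\Q L_1 + 1 \ =\ p+1,
\]
whereas the target is $\spdim_\Q \exlat \le p = \pdim_\Q \exlat$. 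Your chain "$\le \pdim_\Q L_1 + 1 = \pdim_\Q \exlat$" silently assumes $\pdim_\Q L_1 = p-1$, which is false here. The paper avoids this by running \Cref{lemma:reduction} directly on the amalgamated lattices as well; the amalgamation structure is used to organize the enumeration, not to prove the inequality. Relatedly, your characterization of the $211$ surviving ideals as "precisely the non-amalgamated ones for which the reduction loop stalls" is not what happens: the paper's count is $93$ non-amalgamated complexes in dimension two, plus their $93$ Alexander duals, plus $25$ amalgamated three-dimensional complexes whose reduction does not land on anything previously settled (the dual $25$ in dimension two reduce to the $P_2$ case and so are resolved), for a total of $93+93+25=211$. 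So the open list does contain amalgamations.

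The characteristic-$2$ clause should leave nothing for \texttt{SCIP}. You write that remaining exceptional ideals in characteristic $2$ would be "dispatched one by one" by the integer-programming method, but the observation the paper actually makes is stronger and cleaner: in every one of the $211$ surviving cases the Scarf complex fails to be acyclic over $\mathbb{F}_2$, hence is not a stoss complex over a field of characteristic $2$ at all, hence does not index an extremal ideal in that setting. The list is therefore empty in characteristic $2$ and no Hilbert-decomposition computation is needed. (The \texttt{SCIP} computation in the paper appears only once, for the $P_2$ case in the six-generator argument, which in turn feeds into the reduction of the $25$ two-dimensional amalgamated cases.)

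Your discussion of \Cref{thm:sevenI} is closer: the rank-$2p$ clause of \Cref{lemma:reduction} is indeed the extra leverage for $M=I$, though for $\pdim S/I = 4$ the paper uses the even cheaper bound $\spdim I \le \lfloor 7/2 \rfloor = 3 = \pdim I$ and needs no enumeration there at all.
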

The list is available from the author upon request.
These two results are obtained by a rather extensive computer search.
We describe now how we proceeded.
As in the proof of \Cref{thm:six} we may assume that $\KK = \QQ$ for the enumeration of stoss complexes.
\begin{proof}[Proof of \Cref{thm:sevenI}]
	We only need to consider the case $\pdim S/I \in \set{3,4}$ by \Cref{cor:k-2}.
	Moreover, for every ideal with seven generators, we have $\spdim I \leq \lfloor\frac{7}{2}\rfloor = 3$.
	Hence if $\pdim S/I = 4$, then $\pdim I = 3 \geq \spdim I$, so in this case the Stanley conjecture holds.
	
	It remains the case $\pdim S/I = 3$.
	We are going to completely enumerate all maximal lattices of projective dimension $3$ on $7$ atoms.
	First consider amalgamations of lattices.
	Every such lattice comes from a $2$-dimensional stoss complex on seven vertices, which by \Cref{prop:zerlegungstoss} can be
	decomposed into a $2$-dimensional stoss complex on $6$ vertices and a tree on $6$ vertices.
	By symmetry, we may assume that the vertex $v$ of \Cref{prop:zerlegungstoss} is vertex number seven.
	Again, by symmetry, we only need to consider one representative for every isomorphism class of stoss complexes of projective dimension $(3-1)$ on $6$ vertices; there are $84$.
	On the other hand, we need to consider every tree on $6$ vertices; there are $6^{6-2} = 1296$ by the Matrix-Tree theorem.
	So we obtain $84 \cdot 1296 = 108864$ stoss complexes.
	When we pick one representative per isomorphism class we are left with $50651$ complexes.
	For every one we verified the Stanley conjecture using \Cref{lemma:reduction}.
	
	Next we consider those maximal lattices which are not amalgamations.
	Let $\Delta$ be the Scarf complex of such a lattice.
	Every vertex is contained in at least $\binom{7-2}{3-2} + 1 = 6$ facets.
	In fact, by the following counting argument we see that there has to be a vertex $v$ with exactly this number.
	We count the number of vertex-facet incidences. There are $\binom{7-1}{3-1} = 15$ facets and each has $3$ vertices, so there are $45$ incidences.
	On the other hand, there are $7$ vertices, so if each one is contained in at least $7$ facets, we obtain at least $49$ incidences, a contradiction.

	Let $v$ be a vertex which is contained in exactly six facets.
	Then $\lk_\Delta v$ is a connected graph on six vertices, which has one more edge than every tree on six vertices.
	So we can consider $\lk_\Delta v$ as a tree with an additional edge.
	Further, $\del_\Delta v$ is a $2$-dimensional simplicial complex with a complete $1$-skeleton, whose top boundary map is injective (as it is a restriction of the boundary map of $\Delta$).
	As the number of facets of $\del_\Delta v$ is one less than the number of facets of a stoss complex (i.e. the dimension of the kernel of the $1$-dim boundary map), we can add a facet to kill the remaining cycle and to turn $\del_\Delta v$ into a stoss complex.
	Hence, $\del_\Delta v$ can be considered as a $2$-dimensional stoss complex where one facet is missing.

	We enumerate all complexes obtainable by removing one facet from a $2$-dimensional stoss complex on $6$ vertices and filter by isomorphism; there are $234$ isomorphism classes.
	Then we enumerate all graphs on $6$ vertices having $6$ edges and containing a tree; there are $3660$ of them.
	We combine these using $\Delta = \del_\Delta v \cup v * \lk_\Delta v$.
	Not every complex with a link and deletion of this kind is really a stoss complex, but it will be a $2$-dimensional complex with a complete $1$-skeleton. So it is sufficient to check that the complex is acyclic.
	In fact, as the complex has the \qq{expected} number of facets, we only need to verify that the top boundary matrix has full rank.
	After this check and after filtering by isomorphism, there remain $9726$ complexes.
	Again, we were able to verify the Stanley conjecture using \Cref{lemma:reduction} for each of them.
\end{proof}
\begin{proof}[Proof of \Cref{prop:sevenP}]
	Here we have to consider both cases $\pdim S/I = 3$ and $\pdim S/I = 4$.

	Of the $50651$ $2$-dimensional stoss complexes which come from amalgamation, we could verify the Stanley conjecture by \Cref{lemma:reduction} for all but $25$ cases. These cases, however, get reduced by \Cref{lemma:reduction} to the single exceptional case  $P_2$ in $6$ generators from the proof of \Cref{thm:six}, so we know from the computation there that the Stanley conjecture holds in this case.
	For the $9726$ $2$-dimensional complexes that are no amalgamations, \Cref{lemma:reduction} works in all but $93$ cases. These cases remain open so far.
	
	The $3$-dimensional stoss complexes on $7$ vertices are the Alexander duals of the $2$-dimensional ones, so we do not need to do a new enumeration.
	Moreover, Alexander duality respects the property of being an amalgamation.
	Again, of the $50651$ $3$-dimensional stoss complexes which come from amalgamation, we could verify the Stanley conjecture by \Cref{lemma:reduction} for all but $25$ cases. These are in fact the Alexander duals of the difficult $2$-dimensional cases.
	However, in this case they do not reduce to something we know, so these cases remain open as well.
	Finally, for the $9726$ $3$-dimensional complexes that are no amalgamations, \Cref{lemma:reduction} works again in all but $93$ cases, and these are again the Alexander duals of the difficult cases form above. They remain open as well.
	
	Altogether, there remain $93+25+93 = 211$ open cases.
	In all these cases, the Scarf complex is not acyclic in characteristic $2$, so under this assumption we may ignore these cases.
\end{proof}
\begin{remark}
	Unfortunately, the open cases from the preceding proof seem to be too big to be solved using the method described in \ref{ssec:IP} above.
\end{remark}
\begin{remark}
	We would like to mention that there are several interesting articles by D. Popescu and his coauthors \cite{popescu2014, popescuFour, popescuFive} concerning the Stanley depth of monomial ideals under assumptions on the minimal generators.
	However, these results are quite different in nature from the results of the present paper.
	So our results neither imply nor are implied by the results of \cite{popescu2014, popescuFour, popescuFive}.
\end{remark}

\section{Discussion and open problems}
In this section, we discuss some open problems and directions for further research.

In the present paper, we have reduced the Stanley conjecture to a rather narrow class of ideals.
Recall that for every maximal lattice, one can choose a squarefree ideal realizing it. This ideal can be considered as the Stanley-Reisner ideal of a certain simplicial complex, and a Stanley decomposition of the ideal corresponds to a partition of that complex.
So it is natural to ask for the simplicial complexes arising in this way.
\begin{question}
What can be said about these complexes? 
\end{question}
It follows from \Cref{cor:extremal} that these complexes are acyclic.
However, it is not so clear what further information one can expect.
For a start, a simplicial complex and its one-point suspension have the same lcm lattice, so it only makes sense to ask about properties that are preserved under suspension.

Another natural variation of the questions considered in the present paper is to consider \emph{minimal} lattices with a given projective dimension.
As first guess, one might expect that their crosscut complexes should be homology spheres, but this might be very wrong in higher dimension.
A result in this direction could, for example, be used to study the following question.
It is motivated by the observation in \cite{IKM3} that the Stanley projective dimension and the (usual) projective dimension coincide for all ideals with up to five generators.
\begin{question}
Let $I \subseteq S$ be an ideal with $k$ minimal generators, such that $\pdim S/I \in \set{1,2,k-2,k-1,k}$. Does this imply that $\spdim S/I = \pdim S/I$?
\end{question}
For $\pdim S/I = 1$ or $k$ this is easy.
For $\pdim S/I = 2$, the corresponding \emph{minimal} lattice is the boolean lattice on $2$ atoms, so the question also has a positive answer.
Thus only $k-1$ and $k-2$ are open.

In view of \Cref{lemma:amal} and \Cref{thm:amalpdim} we offer the following conjecture.
\begin{conjecture}\label{conj:amal}
	Let $L_2 \subseteq L_1$ be two (finite atomistic) lattices.
	If $\spdim_\Q L_2 < \spdim_\Q L_1$, then $\spdim_\Q L_1 \# L_2 = \spdim_\Q L_1$. 
	In other words, the lower bound in \Cref{lemma:amal} is always attained.
\end{conjecture}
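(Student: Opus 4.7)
The plan is to improve the upper bound $\spdim_\Q L_1 \# L_2 \leq \spdim_\Q L_1 + 1$ of \Cref{lemma:amal} by a single unit, using the slack provided by the hypothesis $\spdim_\Q L_2 < \spdim_\Q L_1$. Write $d := \spdim_\Q L_1$ and $d' := \spdim_\Q L_2$, so that $d' < d$; since the lower bound in \Cref{lemma:amal} already yields $\spdim_\Q L_1 \# L_2 \geq d$, I only need to establish $\spdim_\Q L_1 \# L_2 \leq d$.

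First I would choose a convenient realization of $L_1 \# L_2$. Starting from an ideal $I_1 \subset S$ with $L_{I_1} \cong L_1$ and $\spdim S/I_1 = d$ (obtained via \cite[Theorem 3.4]{lcm}), I would form $J := (I_1, y h) \subset S[y]$, where $y$ is a new variable and $h \in S$ is a monomial assembled from the weights $w(m)$ of those meet-irreducible elements $m$ of $L_1$ which lie outside $L_2$. The aim is that $L_J \cong L_1 \# L_2$ and that $J : y = (I_1, h)$ realizes $L_2$, so that $\spdim S/(J : y) = d'$. Verifying these identities is a direct combinatorial check in the correspondence of \cite[Theorem 3.4]{lcm}; should a single $h$ be insufficient, I would replace $yh$ by several generators $y h_1, \dotsc, y h_r$ encoding the quotient structure of $L_1$ relative to $L_2$.

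The central argument will use the multigraded vector space decomposition
\[ S[y]/J \;=\; S/I_1 \;\oplus\; y\KK[y] \otimes_{\KK} S/(I_1, h). \]
A Stanley decomposition of $S/I_1$ of depth $n - d$ extends to one of $S/I_1 \subseteq S[y]/J$ of the same depth (the variable $y$ annihilates $S/I_1$ and so cannot enter any Stanley space), while a Stanley decomposition of $S/(I_1, h)$ of depth $n - d'$ yields one of $y\KK[y] \otimes S/(I_1, h)$ of depth $n + 1 - d'$. Combining them naively produces $\sdepth S[y]/J \geq n - d$, i.e.\ $\spdim_\Q L_1 \# L_2 \leq d + 1$, which just recovers \Cref{lemma:amal}. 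To gain the remaining unit, I would enlarge each Stanley space $\lambda \KK[Z]$ of the first summand with $|Z| = n - d$ into $\lambda \KK[Z \cup \set{y}]$ of depth $n - d + 1$, and compensate by carving the resulting $y$-divisible tail $y \lambda \KK[Z \cup \set{y}]$ out of the Stanley decomposition of the second summand. Since that decomposition has depth $n + 1 - d' \geq n - d + 2$, i.e.\ one unit greater than needed, there is in principle combinatorial room for the surgery.

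The hard part will be this final absorption step. It amounts to a multigraded matching problem: every tail $y\lambda \KK[Z \cup \set{y}]$ arising from the first summand must be written as a disjoint union of subspaces of Stanley spaces of the second summand, with the residual still forming a valid Stanley decomposition of depth $\geq n + 1 - d$. Without additional structural compatibility between the two decompositions, there is no general reason such a matching exists. An alternative attack via \Cref{lemma:reduction} --- iteratively removing the elements $(b, 1)$ of $L_1 \# L_2$ for $b \in L_2 \setminus \set{\hat{1}_{L_2}}$ in decreasing order of rank, using \Cref{lem:ordnung} to guarantee meet-irreducibility at each stage --- runs into the dual obstacle that the rank hypothesis $\spdim_\Q L_{\leq (b,1)} < d$ becomes, via $L_{\leq (b,1)} = (L_1)_{\leq b} \# (L_2)_{\leq b}$, an instance of the conjecture itself on smaller lattices; closing the induction appears to demand the very strict inequality one is trying to prove. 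This combinatorial rigidity is presumably why the statement remains only a conjecture in the paper.
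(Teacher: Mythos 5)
This statement is a \emph{conjecture} in the paper, not a theorem; the paper offers no proof, only the motivating evidence from \Cref{lemma:amal}, \Cref{thm:amalpdim}, and the worked cases of \Cref{ex:tree}, \Cref{thm:k-2}, and \Cref{thm:six}. You have correctly recognized this: your ``proposal'' is not a proof but an honest reconnaissance of why the two most natural lines of attack stall. There is therefore no gap in the sense of a flawed argument, because you do not claim to close the argument.

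Your analysis of the obstacles is accurate and worth recording. The direct approach --- realizing $L_1 \# L_2$ via $J = (I_1, yh)$, decomposing $S[y]/J$ as $S/I_1 \oplus y\KK[y] \otimes S/(I_1,h)$, and trying to absorb the $y$-enlarged tails of the first summand into the slack of the second --- is exactly where the paper's upper bound $\spdim_\Q L_1 \# L_2 \leq \spdim_\Q L_1 + 1$ in \Cref{lemma:amal} comes from, and you correctly see that saving the extra unit is a nontrivial multigraded matching problem with no obvious compatibility to exploit. Likewise, the inductive route through \Cref{lemma:reduction} does, as you note, beg the question at the level of lower intervals $(L_1)_{\leq b} \# (L_2)_{\leq b}$: the required estimate $\spdim_\Q L_{\leq (b,1)} < \spdim_\Q L_1$ is essentially the conjecture again. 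One small caveat on the realization: you should double-check that a single auxiliary generator $yh$ (or even finitely many $yh_i$) always suffices to make $L_J \cong L_1 \# L_2$ with $L_{J:y}$ the copy of $L_2$; in general one should instead invoke \Cref{rem:amal} and \cite[Theorem 3.4]{lcm} directly on the abstract lattice $L_1 \# L_2 \subset \bool{k+1}$, which is what the paper does in the proof of \Cref{lemma:reduction}. But this is a technical refinement, not the source of the difficulty.
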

This conjecture is also interesting if one assumes that the lattices are maximal.
Example \ref{ex:tree} and the proofs of \Cref{thm:k-2} and \Cref{thm:six} can be seen as special cases of this conjecture.
If it is true, then one could reduce the Stanley conjecture to those maximal lattices that are not amalgamations of smaller maximal lattices.

Finally, we would like to mention that \Cref{lemma:reduction} seems very powerful for computing the Stanley projective dimension. 
With the aid of a computer, we have verified that one can compute the Stanley projective dimension of all ideals with up to five generators just using this lemma. Even for six generators it seems to work for the vast majority of cases.
On the other hand, there is at least one systematic failure of this approach. Namely, it is not difficult to see that \Cref{lemma:reduction} will also give an upper bound for the projective dimension, and this bound is characteristic free. 
Hence if $I$ is an ideal such that $\pdim_{\Char 0} S/I < \pdim_{\Char p} S/I$, then \Cref{lemma:reduction} is very unlikely prove that $\spdim_{\Char 0} S/I \leq \pdim_{\Char 0} S/I$. 
This happens for example with the Stanley-Reisner ideals of projective spaces. So in particular the exceptional case in the proof of \Cref{thm:six} is of this type.
We wonder if one could somehow improve \Cref{lemma:reduction} to avoid this problem.

On the other hand, one can always use \Cref{lemma:reduction} as a preprocessing step in computing the Stanley depth. 
So the following approach seems promising to us: For a given ideal, one can compute its lcm lattice, then reduce it with \Cref{lemma:reduction}, then chose an ideal for the reduced lattice and compute its Stanley depth by other means.
The choice of the ideal can be easily done by \cite[Theorem 3.4]{lcm} or \cite[Theorem 3.2]{M} and there are well-known algorithms for computing the Stanley depth of ideals \cite{HVZ,IZ}.

\section*{Acknowledgment}
The author would like to thank Winfried Bruns for suggesting the method described in \ref{ssec:IP} and Markus Spitzweck for discussing the proof of \Cref{thm:extremal}.
Moreover, the author thanks Bogdan Ichim, Francesco Strazzanti, Richard Sieg and Mihai Cipu for pointing out many typos in an earlier version of this article.
Further, I would like to thank the two anonymous reviewers for several helpful hints and suggestions.

\printbibliography

\end{document}